\numberwithin{equation}{section}
\theoremstyle{plain}
\newtheorem{theorem}[equation]{Theorem}
\newtheorem{lemma}[equation]{Lemma}
\newtheorem{proposition}[equation]{Proposition}
\newtheorem{corollary}[equation]{Corollary}
\theoremstyle{definition}
\newtheorem{definition}[equation]{Definition}
\newtheorem{remark}[equation]{Remark}
\newcommand{\bC}{\mathbb{C}}
\newcommand{\bF}{\mathbb{F}}
\newcommand{\bN}{\mathbb{N}}
\newcommand{\bP}{\mathbb{P}}
\newcommand{\bQ}{\mathbb{Q}}
\newcommand{\bR}{\mathbb{R}}
\newcommand{\bZ}{\mathbb{Z}}
\newcommand{\sD}{\mathscr{D}}
\newcommand{\sF}{\mathscr{F}}
\newcommand{\cO}{\mathcal{O}}
\newcommand{\cR}{\mathcal{R}}
\newcommand{\fE}{\mathfrak{E}}
\newcommand{\fH}{\mathfrak{H}}
\newcommand{\fK}{\mathfrak{K}}
\newcommand{\fM}{\mathfrak{M}}
\newcommand{\fP}{\mathfrak{P}}
\newcommand{\fT}{\mathfrak{T}}
\newcommand{\fU}{\mathfrak{U}}
\newcommand{\fW}{\mathfrak{W}}
\newcommand{\fX}{\mathfrak{X}}
\newcommand{\fY}{\mathfrak{Y}}
\newcommand{\Aut}{\operatorname{Aut}}
\newcommand{\car}{\operatorname{char}}
\newcommand{\Ho}{\operatorname{Ho}}
\newcommand{\id}{\operatorname{id}}
\newcommand{\inc}{\operatorname{inc}}
\newcommand{\Q}{Q}
\newcommand{\DL}{\operatorname{DL}}
\newcommand{\loopinf}{\Omega^{\infty}}
\newcommand{\suspinf}{\Sigma^{\infty}}
\newcommand{\leg}{\operatorname{leg}}
\newcommand{\ver}{\operatorname{Vert}}
\newcommand{\colim}{\operatorname{colim}}
\newcommand{\mnbar}[2]{\overline{\fM}_{#1,#2}}
\newcommand{\M}{\overline{\fM}}
\newcommand{\trf}{\operatorname{trf}}
\newcommand{\thom}{\operatorname{th}}
\newcommand{\proj}{\operatorname{proj}}
\newcommand{\groupcompletion}{\operatorname{gc}}
\newcommand{\dash}{^{\prime}}
\newcommand{\hq}{/\hspace{-1.2mm}/}
\newcommand{\PT}{\operatorname{PT}}
\newcommand{\Sing}{\operatorname{Sing}}
\newcommand{\diff}{\mbox{\textbf{diff}}}
\newcommand{\sch}{\mbox{\textbf{sch}}}
\newcommand{\ttop}{\mbox{\textbf{top}}}
\newcommand{\bTh}{\mathbb{T}\mathbf{h}}
\newcommand{\algstacks}{\textsc{Stacks}^{\sch}}
\newcommand{\topstacks}{\textsc{Stacks}^{\ttop}}
\newcommand{\supp}{\operatorname{supp}}
\newcommand{\End}{\operatorname{End}}
\newcommand{\Mon}{\operatorname{Mon}}
\newcommand{\Hom}{\operatorname{Hom}}
\begin{document}

\title[Pontrjagin-Thom maps and the moduli stack of stable curves]{Pontrjagin-Thom maps and the homology of the moduli stack of stable curves}

\author{Johannes Ebert  \and Jeffrey Giansiracusa}

\maketitle

\begin{abstract}
  We study the singular homology (with field coefficients) of the
  moduli stack $\mnbar{g}{n}$ of stable $n$-pointed complex curves of
  genus $g$.  Each irreducible boundary component of $\M_{g,n}$
  determines via the Pontrjagin-Thom construction a map from
  $\mnbar{g}{n}$ to a certain infinite loop space whose homology is
  well understood.  We show that these maps are surjective on homology
  in a range of degrees proportional to the genus.  This detects many
  new torsion classes in the homology of $\mnbar{g}{n}$.
\end{abstract}

\setcounter{tocdepth}{1}
\tableofcontents

\section{Introduction}

Let $\mnbar{g}{n}$ denote the moduli stack of stable nodal complex
curves of genus $g$ with $n$ labeled marked points; this is the
Deligne-Mumford-Knudsen compactification of the moduli stack
$\fM_{g,n}$ of smooth curves.  This object plays a central role in
Gromov-Witten theory, conformal field theory, and conjecturally in
string topology.  The rational cohomology of $\mnbar{g}{n}$ and its
tautological subalgebra have been extensively studied in the
literature, and the structure of the tautological algebra is at least
conjecturally known.  However, the mod $p$ (co)homology has received
relatively little attention.  Here the distinction between the
\emph{moduli stack} and the associated \emph{coarse moduli space}
becomes important because they are only \emph{rationally} homology
isomorphic.  We take the point of view that the moduli stack is the
more fundamental object.

Using the proof of the integrally refined Mumford conjecture by Madsen
and Weiss \cite{Madsen-Weiss}, Galatius \cite{Galatius} completely
computed the mod $p$ homology of $\fM_{g,n}$ in the Harer-Ivanov
stable range; there are large families of torsion classes.  Here we
address the question of torsion in the homology of the compactified
moduli stack.

The boundary $\partial \mnbar{g}{n}:= \mnbar{g}{n} \smallsetminus
\fM_{g,n}$ of $\mnbar{g}{n}$ is a union of substacks of complex
codimension $1$.  These irreducible boundary components are the images
of the `gluing' morphisms between moduli stacks
defined by identifying two marked points together to form a node.  Let $P$
be a subset of $\{1, \ldots, n\}$.  The gluing morphisms are:
\begin{equation}\label{naturalmorphisms}
\begin{split}
\xi_{irr}:\: & \mnbar{g-1}{n+(2)} \to \mnbar{g}{n}, \\
\xi_{h,P}:\: & \mnbar{h}{P\sqcup \{p_1\}} \times \mnbar{g-h}{P^c\sqcup
 \{p_2\}}\to \mnbar{g}{n},
\end{split}
\end{equation}
where $\mnbar{g-1}{n+(2)}$ is the moduli stack of stable curves with
$n+2$ marked points, the first $n$ of which are labeled. These
morphisms are representable proper immersions (in fact embeddings when
$P$ is a proper subset) of complex codimension $1$ with transversal
(self-)intersections and their images are precisely the various
irreducible components of the boundary.

We study the effect on homology of the
Pontrjagin-Thom maps for these immersions.  We show that the
(self)-intersections produce large families of torsion homology
classes which are unrelated to the known torsion classes on
$\fM_{g,n}$.

Recall that if $N^{n-k}\looparrowright M^n$ is a proper immersion of
real codimension $k$ between smooth manifolds then the classical
Pontrjagin-Thom construction produces a map $M \to \Q N^{\nu(f)}$,
where $\Q X=\Omega^\infty \Sigma^\infty X$ is the free infinite loop
space generated by $X$, and $N^{\nu(f)}$ is the Thom space of the
normal bundle $\nu(f)$.  A reduction of the structural group of
$\nu(f)$ to $G \stackrel{j}{\to} GL_k(\bR)$ induces a map
\[
\Q N^{\nu(f)} \to \Q BG^{j^*\gamma_k},
\]
where $\gamma_k$ is the universal $k$-plane bundle over $BGL_k(\bR)$.
Thus we obtain a map
\[
M\to \Q BG^{j^* \gamma_k}.
\]
In section \ref{ptconstructionsection} we extend the classical construction of
Pontrjagin-Thom maps to the category of differentiable local quotient
stacks.  A stack $\fX$ admitting an atlas has an associated
\emph{homotopy type} $\Ho(\fX)$ (see section \ref{hostacks}) which is
a space that has the same homological invariants as the stack, and the
Pontrjagin-Thom construction produces a map out of the homotopy type.

Let $T(2)=U(1) \times U(1)$ denote the maximal torus in $U(2)$, and
let $N(2) \cong U(1) \wr \bZ/2 = U(1)^2 \rtimes \bZ/2$ denote the
normalizer of the maximal torus.  There are homomorphisms
\[
T(2) \hookrightarrow N(2) \to U(1),
\]
where the first arrow is the inclusion and the second is defined by
multiplying the $U(1)$ components together; we write $V$ for the
universal line bundle over $BU(1)$ or its pullback to $BN(2)$ or
$BT(2)$.  The normal bundle of $\xi_{irr}$ comes with a reduction
of structure group to $N(2)$, and the structure group of the normal
bundle of $\xi_{h,P}$ reduces to $T(2)$.  Thus we have Pontrjagin-Thom
maps
\begin{align*}
  \Phi_{irr}:\: & \Ho(\mnbar{g}{n}) \to \Q BN(2)^V, \\
  \Phi^{1}_{h,P}:\: & \Ho(\mnbar{g}{n}) \to \Q BT(2)^V , \\
  \Phi^{0}_{h,P}:\: & \Ho(\mnbar{g}{n}) \to \Q BT(2)^V \to \Q BU(1)^V,
\end{align*}
where $\Phi^{0}_{h,P}$ is the composition of $\Phi^{1}_{h,P}$ with the
map induced by the multiplication $T(2) \to U(1)$. Our main theorem is
the following.
\begin{theorem}\label{mainthmspecial}
Let $g$ and $n$ be fixed. Let $\bF$ be a field (of arbitrary characteristic).
\begin{enumerate}
\item The map $\Phi_{irr}$ is surjective on $H_i(-,\bF)$ for $i \leq (g-2)/4$.
\item The map $\Phi_{h,\emptyset}^1$ is surjective on $H_i(-,\bF)$ for
  $i \leq (h/2-1)$, $i \leq (g-2)/(2h+2)$.
\item The map $\Phi_{h,\emptyset}^0$ is surjective on $H_i(-,\bF)$ for
  $i \leq (g-2)/(2h+2)$.
\end{enumerate}
\end{theorem}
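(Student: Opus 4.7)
The plan attacks surjectivity through three ingredients: (a) the Cohen--Lada--May--Nakaoka description of $H_*(QY;\bF)$ via Dyer--Lashof operations, (b) a geometric realization of these operations in $H_*(\Ho(\mnbar{g}{n});\bF)$ coming from self-intersections of the gluing immersions, and (c) homological stability for moduli of curves together with the Madsen--Weiss--Galatius identification of the stable homology of $\fM_{g,n}$.

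For ingredient (a), recall that $H_*(QY;\bF)$ is a free graded-commutative algebra on iterated Dyer--Lashof operations applied to $\widetilde H_*(Y;\bF)$. Applied to $Y = BG^V$ with $G \in \{N(2), T(2), U(1)\}$, this furnishes an explicit additive basis of the target of $\Phi$ indexed by admissible operation sequences $I$ and classes $x \in H_*(BG;\bF)$. It therefore suffices to lift each basis element $Q^I x$ along $\Phi$.

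For ingredient (b), the immersions $\xi_{irr}$ and $\xi_{h,P}$ are self-transverse, so their $r$-fold self-intersection is represented by a proper immersion of stacks whose source is a homotopy quotient by a wreath product. In the irreducible case this is
\[
  \mnbar{g-r}{n+2r} \hq (\bZ/2 \wr \Sigma_r) \longrightarrow \mnbar{g}{n},
\]
with normal bundle of structure group $N(2) \wr \Sigma_r$ acting on $V^{\oplus r}$; the $\xi_{h,\emptyset}$ cases are analogous, with structure group $T(2) \wr \Sigma_r$ and a source built from $r$ copies of $\mnbar{h}{1}$ glued to $\mnbar{g-rh}{n+r}$. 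Using the classifying-space identification $B(G\wr\Sigma_r)^{V^{\oplus r}} \simeq D_r BG^V$ together with the naturality of the stacky Pontrjagin--Thom construction of Section \ref{ptconstructionsection}, the composite of each $r$-fold self-intersection map with $\Phi$ factors through the canonical map
\[
  D_r BG^V = (E\Sigma_r)_+ \wedge_{\Sigma_r} (BG^V)^{\wedge r} \longrightarrow QBG^V,
\]
which is precisely the space carrying the Dyer--Lashof operations on $H_*(BG^V;\bF)$.

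Combining (a) and (b), surjectivity onto a generator $Q^I x$ reduces to lifting $x \in H_*(BG;\bF)$ to $H_*$ of the relevant source moduli stack, where the weight of $I$ governs the number of iterations $r$. By Harer--Ivanov stability together with Galatius's theorem on $H_*(\fM_{g',n'};\bF)$, such lifts are available once the residual genus is sufficiently large compared with the target degree. Bookkeeping the genus consumed per node (one for $\xi_{irr}$, $h$ for $\xi_{h,\emptyset}$) together with the stability slope produces the bounds $i \leq (g-2)/4$ and $i \leq (g-2)/(2h+2)$; the additional constraint $i \leq h/2-1$ in part (2) comes from requiring that the first factor $\mnbar{h}{1}$ itself lies in its stable range, which is unnecessary in part (3) because the projection $BT(2)\to BU(1)$ forgets the separate contribution of that factor. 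The principal obstacle lies in step (b): verifying, in the setting of differentiable local quotient stacks, that the Pontrjagin--Thom construction applied to the $r$-fold self-intersection truly factors through the extended-power map $D_r BG^V \to QBG^V$. This requires a careful stacky model of the tubular neighborhood of the iterated self-intersection locus together with an explicit identification of its normal bundle as a $(G\wr\Sigma_r)$-bundle on $V^{\oplus r}$; once this geometric factorization is in place, the remaining Dyer--Lashof bookkeeping and stability estimates are routine.
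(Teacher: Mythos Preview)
Your outline is close to the paper's argument and would work with modest repairs, but two points deserve correction.

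First, a genuine gap: you write that ``it therefore suffices to lift each basis element $Q^I x$ along $\Phi$'' and later reduce to ``lifting $x \in H_*(BG;\bF)$''. This would be enough if $\Phi_*$ were a ring homomorphism, but $\Ho(\mnbar{g}{n})$ is not an $H$-space, so $\Phi_*$ has no reason to respect the Pontrjagin product on $H_*(QBG^V;\bF)$. You must lift every \emph{monomial} $\prod_j Q^{I_j}x_j$, not just the algebra generators. The repair is easy with your setup: a degree-$i$ monomial has weight $r=\sum_j p^{|I_j|}\le i/2$ (since $BG^V$ is $1$-connected) and lies in $\widetilde H_i(D_r BG^V)$ under the Snaith splitting, so it is hit via the stratum $\fM((\Gamma_r))$ with $r$ self-intersections.

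Second, your appeal to Madsen--Weiss and Galatius is unnecessary. What you actually need is that the classifying map $\fM((\Gamma_r))\to B(G\wr\Sigma_r)$ for the line bundles at the nodes is surjective on $H_i$ once the residual genus is large; this is exactly the B\"odigheimer--Tillmann ``stripping and splitting'' theorem, i.e.\ Harer--Ivanov stability in disguise (Proposition~\ref{strip-and-split-prop}). The stable computation of $H_*(\fM_{g};\bF_p)$ plays no role.

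By way of comparison, the paper packages the same geometry differently. Rather than varying $r$ and using the Snaith weight decomposition of $H_*(QBG^V)$, it fixes a \emph{single} test stratum $\fM((\Gamma))$ with $m\approx 2i$ nodes and factors $\Phi\circ\xi_{E(\Gamma)}$ through the Barratt--Priddy--Quillen--Segal group completion $B(G\wr\Sigma_m)\to Q_{(m)}BG_+$, which is a homology isomorphism in degrees $\le (m-1)/2$ by stability for symmetric groups, followed by $Q\,\inc\colon Q(BG_+)\to Q(BG^V)$, which is $H_*$-surjective because the Euler class of $V$ is a non-zero-divisor (Lemmas~\ref{thomiso} and~\ref{dyerlashof}). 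The commutativity of this factorization with $\Phi$ is Lemma~\ref{transfer-lemma}, which is precisely the ``stacky tubular neighborhood'' compatibility you flag as the principal obstacle. The paper's organization is a bit cleaner because one stratum does all the work and the numerics fall out of the single optimization $\min\{(g-m)/2-1,\,m/2\}$, whereas your version requires tracking the range over all weights $r\le i/2$.
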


This theorem detects large families of new torsion classes in the
(co)\-homology of $\mnbar{g}{n}$ as follows.  Let $\Phi$ be one of the
above maps.  On cohomology with field coefficients the induced map
$\Phi^*$ is injective in one of the above ranges of degrees. 
\subsubsection*{Rationally} 
The cohomology of $\Q BN(2)^V$ with coefficients in $\bQ$ is the free
commutative algebra on generators $a_{i,j}$ ($i,j \geq 0$) of degree
$2+2i+4j$.  In this case the image of $\Phi^*$ is contained in the
tautological algebra; see section \ref{comparisontautological}.
\subsubsection*{Mod $p$}
The mod $p$ Betti numbers of $\Q BN(2)^V$ are much larger than the
rational Betti numbers.  If $\car (\bF) > 0$, then $H_*(\Q BN(2)^V;
\bF)$ has a large and rich structure --- it is the free
graded-commutative algebra over the free Dyer-Lashof module generated by
$\widetilde{H}_*(BN(2)^V; \bF)$; see sections \ref{homthomspace} and
\ref{infinite-loop-space-homology} for details.  Hence this detects
large families of new mod $p$ cohomology classes of $\mnbar{g}{n}$
which are \emph{not} reductions of rationally nontrivial classes.

\begin{remark}
\begin{enumerate}
\item More generally, one can take the cartesian product of several of
  these Pontrjagin-Thom maps and the induced map on homology will be
  surjective in a range of degrees.  However, stating the exact range
  of degrees becomes somewhat cumbersome. The more general result is
  Theorem \ref{mainthm}.
\item Note that the range of surjectivity is proportional to $g$ in
  (1) and (3) but not in (2). On the other hand, the homology groups
  of the target in (2) are somewhat larger than those of the target in
  (3), so $\Phi^1_{h,\emptyset}$ detects more classes than
  $\Phi^0_{h,\emptyset}$ but in a reduced range of degrees.
\item When $\emptyset \neq P\subsetneq \{1, \ldots, n\}$ the morphism
  $\xi_{h,P}$ is an embedding.  Therefore its Pontrjagin-Thom map
  factors through $BT(2)^V \to \Q BT(2)^V$. The cohomology classes
  pulled back from $BT(2)^V$ all lie in the tautological ring of
  $\mnbar{g}{n}$.  However, one can also consider the quotient
  $\mnbar{g}{n}\hq \Sigma_n$, where the symmetric group acts by
  permuting the labels of the marked points.  Now the gluing morphism
  \[\xi_{h,P}: \mnbar{h}{P\sqcup \{p_1\}}\hq \Sigma_P \times
  \mnbar{g-h}{P^c\sqcup \{p_2\} }\hq \Sigma_{P^c} \to \mnbar{g}{n} \hq \Sigma_n
  \]
  is an immersion with nontrivial self-intersections whenever $h<g/2$.
  In this case one can easily adapt the proof of Theorem
  \ref{mainthmspecial} to show for instance that the associated
  Pontrjagin-Thom map $\Phi_{h,P}^0$ is surjective on homology in
  degrees $i \leq (g-2)/(2h+2)$, provided that $n\geq |P|
  (g-2)/(2h+2)$.
\item Finally we mention that the restriction of the Pontrjagin-Thom
  maps to the moduli stack $\fM_{g,n}$ of smooth curves is
  nullhomotopic, because the images of the natural morphisms
  (\ref{naturalmorphisms}) lie in $\partial \mnbar{g}{n}$. Thus the
  torsion classes we detect are \emph{not} related to the torsion
  classes on $\fM_{g,n}$ which were computed by Galatius
  \cite{Galatius}.
\end{enumerate}
\end{remark}

There is a certain overlap between Theorem \ref{mainthmspecial} and
unpublished work by Eliashberg and Galatius
\cite{Galatius-Eliashberg}.  They announced a determination of the
homotopy type of the moduli stack of stable irreducible curves as the
genus tends to infinity.  Their result should imply our theorem for
the Pontrjagin-Thom map $\Phi_{irr}$.  However, they do not consider
the other boundary strata.

\subsection*{Outline} In section \ref{hostacks} we recall some
material on stacks and explain the notion of the \emph{homotopy type}
of a topological stack.  In section \ref{ptconstructionsection}, we
show how to generalize the Pontrjagin-Thom construction to proper morphisms
of local quotient stacks. Section \ref{moduli-exposition} reviews some
needed facts about the geometry of the moduli stack $\mnbar{g}{n}$.
In section \ref{maintheorem-section} we state our main theorem in full
generality and prove it.  In section \ref{comparisontautological} we
describe how the classes we detect rationally are related to the
tautological algebra.

\subsection*{Acknowledgements}

The first author was supported by a postdoctoral grant from the German
Academic Exchange Service (DAAD). Some preliminary parts of this work
were done during his stay at the Sonderforschungsbereich
``Geometrische Strukturen in der Mathematik'' at the Mathematical
Institute in M\"unster; he thanks Wolfgang L\"uck for his invitation.
The second author thanks the IHES for its hospitality and C.F.
B\"odigheimer for an invitation to Bonn, where this project was begun.
Both authors thank Ulrike Tillmann for helpful comments.

\section{Some homotopy theory for topological stacks}\label{hostacks}

In this section we set up the homotopical framework in which the
Pontrjagin-Thom maps for stacks will reside.

\subsection{Generalities on stacks}

We will assume that the reader is comfortable with the language of
stacks and therefore we will not repeat the basic definitions in
detail. A stack over a site $\mbox{\textbf{S}}$ is a lax sheaf of
groupoids over $\mbox{\textbf{S}}$.  We will consider the sites
$\diff$ and $\ttop$ of smooth manifolds and topological spaces.  The
reader is referred to \cite{Hein} and \cite{Noo1} for readable
introductions to the theory of stacks over the sites $\diff$ and
$\ttop$.

On the site $\diff$ there is a subtlety in the definition of
representable morphisms since one needs transversality for the
pullback of two smooth maps to be a smooth manifold. We propose a
definition which differs slightly from that given in \cite{Hein}.
\begin{definition}
\begin{enumerate}
\item A morphism $f:\fX \to \fY$ of stacks on the site $\diff$ is a
  \emph{representable submersion} if for any manifold $M$ and any
  morphism $M \to \fX$, the fiber product $M \times_{\fY} \fX$ is a
  smooth manifold and the induced map $M \times_{\fY} \fX \to M$ is a
  submersion.
\item A morphism $f:\fX \to \fY$ of stacks over $\mbox{\textbf{S}}$ is
  \emph{representable} if for any representable submersion $f: M \to
  \fY$, the pullback $M \times_{\fY} \fX$ is a smooth manifold and the
  induced map $M \times_{\fY} \fX \to M$ is a smooth map.
\end{enumerate}
\end{definition}
With this definition any smooth map between manifolds is representable
when considered as a morphism of stacks and any morphism from a smooth
manifold to a stack over $\diff$ is representable.  Let $\fX$ be a stack
over $\diff$. An \emph{atlas} is a smooth manifold $X$ together with a
representable submersion $p:X \to \fX$. A stack which admits an atlas
is called a \emph{differentiable stack}.
 
Similarly, we can define topological stacks. We say that a
representable morphism $f:\fX \to \fY$ of stacks over $\ttop$
\emph{has local sections} if for any space $Y$ and any map $Y \to
\fY$, the pullback $\fX \times_{\fY} Y \to Y$ admits local sections
(observe that maps which have local sections are surjective and having
local sections is a property which is invariant under base-change). An
atlas for a stack $\fX$ over $\ttop$ is a space $X$ together with a
representable morphism $p:X \to \fX$ having local sections. A
\emph{topological stack} is a stack $\fX$ over $\ttop$ which admits an
atlas. Our terminology differs from that used by Noohi \cite{Noo1}:
the topological stacks defined above are called ``pretopological
stacks'' in \cite{Noo1} and his ``topological stacks'' satisfy a
stronger condition.

We write $\textsc{Stacks}^{\mbox{\textbf{S}}}$ for the category of
stacks on $\mbox{\textbf{S}}$ which admit an atlas.  Note that
$\topstacks$ contains the category of spaces as a full subcategory.  A
topological (or differentiable, resp.)  stack is said to be a
\emph{Deligne-Mumford} stack if it has an \'etale atlas, i.e. there is
an atlas $p:\fX \to X$ which is a local homeomorphism (local
diffeomorphism, resp.). A differentiable Deligne-Mumford stack is the
same as an \emph{orbifold}.

There is also the category $\algstacks$ of algebraic stacks, studied
in the book \cite{Laumon}.  Moduli stacks of (stable) curves, which
constitute the example of interested to us, are most conveniently
described (and constructed) as algebraic stacks.  There is a functor
$\algstacks \to \topstacks$ which extends the ``complex points
functor'' and is constructed as follows (for details, see \cite{Noo1},
p. 78 f.). An atlas $X\to \fX$ gives rise to a groupoid object in
schemes $X\times_\fX X \rightrightarrows X$, and the moduli stack of
torsors for this groupoid object is canonically equivalent to the
original stack. Taking complex points with the analytic topology gives
a groupoid in topological spaces which determines a topological stack.
The restriction of this functor to smooth stacks in schemes takes
values in differentiable stacks, and its restriction to smooth
Deligne-Mumford algebraic stacks takes values in differentiable
Deligne-Mumford stacks.

\subsection{The homotopy type of a topological stack}

We now introduce the homotopy type of a topological stack. There is a
folklore definition of the homotopy type as the classifying space of
the groupoid associated to an atlas. We present an axiomatic approach
which is equivalent by Proposition \ref{noohistheorem}.  The content
here is a ideological reemphasis of ideas which have been present in
the literature for some time.  The main technical points of the
following exposition are contained in \cite{Noo2}, although the notion
of a homotopy type does not occur explicitly there.

\begin{definition}
  Let $f:\fX \to \fY$ be a representable morphism of topological
  stacks.  Then $f$ is said to be a \emph{universal weak equivalence}
  if for any test map $Y \to \fY$ from a space $Y$, the left vertical
  map in the diagram
\[
\xymatrix{
Y \times_{\fY} \fX \ar[r] \ar[d] & \fX \ar[d]^{f}\\
Y \ar[r] & \fY}
\]
is a weak homotopy equivalence of topological spaces.

A \emph{homotopy type} for a topological stack $\fX$ is a pair
$(\Ho(\fX),\eta)$, where $\Ho(\fX)$ is a CW complex and
$\eta: \Ho(\fX) \to \fX$ is a universal weak equivalence (which is
automatically representable, by \cite{Noo1}, Corollary 7.3).
\end{definition}

\begin{definition} Let $\fX$ be a topological stack and
$Y$ be a topological space.  A \emph{concordance} between elements
$t_0, t_1 \in \fX(Y)$ is an element $t \in \fX(Y \times [0,1])$,
together with isomorphisms $t|_{Y \times \{i\}} \cong t_i$, $i=0,1$.
The category $\fX(Y)$ is skeletally small and concordance is an
equivalence relation on the objects.  The set of concordance classes
of objects is denoted $\fX[Y]$.
\end{definition}

Note that for spaces $X$ and $Y$, there is a natural bijection between
concordance classes $X[Y]$ and homotopy classes $[Y,X]$.

\begin{lemma}[\cite{Noo2}, Corollary 3.8]\label{liftinglemma}
  Let $\eta:\Ho(\fX) \to \fX$ be a homotopy type for $\fX$. Then for
  each CW complex $Y$ and map $g: Y \to \fX$, there exists a map $h:Y
  \to \Ho(\fX)$ and a concordance between $\eta \circ h$ and $g$.
  Moreover, $h$ is unique up to homotopy (which is the same as
  concordance).
\end{lemma}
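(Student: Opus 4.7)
The plan is to deduce both existence and uniqueness from the defining universal weak equivalence property of $\eta$, combined with CW approximation and (the relative form of) Whitehead's theorem. Since $\eta$ is automatically representable, the pullback of $\eta$ along any map from a space is again a topological space, so the stack-theoretic problem reduces to a classical lifting problem for topological spaces.

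For existence, I would pull $\eta$ back along $g$:
\[
\xymatrix{
P \ar[r]^-{\tilde g}\ar[d]_-{\pi} & \Ho(\fX)\ar[d]^-{\eta} \\
Y \ar[r]^-{g} & \fX,
}
\]
with $P := Y\times_{\fX}\Ho(\fX)$. By the definition of universal weak equivalence, $\pi$ is a weak homotopy equivalence of topological spaces. Since $P$ need not carry a CW structure, I would choose a CW approximation $q\colon Z\to P$; then $\pi\circ q\colon Z\to Y$ is a weak equivalence between CW complexes, hence a homotopy equivalence by Whitehead's theorem, and admits a homotopy inverse $s\colon Y\to Z$. Set $h:=\tilde g\circ q\circ s$. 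The canonical 2-isomorphism $\eta\circ\tilde g\cong g\circ\pi$ filling the pullback square, whiskered by $q\circ s$ and concatenated with the homotopy $g\circ\pi\circ q\circ s\simeq g$ coming from $\pi\circ q\circ s\simeq\id_Y$, provides the required concordance between $\eta\circ h$ and $g$.

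For uniqueness, suppose $h_0,h_1\colon Y\to\Ho(\fX)$ are two lifts with concordances $c_i\colon\eta\circ h_i\rightsquigarrow g$. Gluing $c_0$ and the reverse of $c_1$ along $g$ produces a single concordance $C\colon\eta\circ h_0\rightsquigarrow\eta\circ h_1$, that is, a map $Y\times[0,1]\to\fX$. I would now repeat the pullback construction for $C$: the space $W:=(Y\times[0,1])\times_{\fX}\Ho(\fX)$ projects to $Y\times[0,1]$ via a weak equivalence, and the lifts $h_0,h_1$ together determine a partial section over the CW subcomplex $Y\times\{0,1\}\hookrightarrow Y\times[0,1]$. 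Choosing a CW approximation of $W$ compatible with this boundary data and invoking the \emph{relative} Whitehead theorem yields a section $S\colon Y\times[0,1]\to W$ extending the $h_i$; composing $S$ with $W\to\Ho(\fX)$ gives a homotopy $h_0\simeq h_1$.

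The main technical obstacle is precisely this relative step: one must guarantee that the CW-approximation of $P$ (resp.\ $W$) and the choice of homotopy inverse can be made compatibly with the existing partial sections over a cofibrant subspace. This is handled either by invoking the relative Whitehead theorem directly for CW pairs, or by replacing $\pi$ with a Serre fibration using the mapping path space construction and then using the honest homotopy lifting property. Once this is in place, the translation between concordances in $\fX$ and ordinary homotopies is automatic: as noted just before the lemma, for spaces the two notions coincide.
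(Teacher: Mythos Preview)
The paper does not actually supply a proof of this lemma; it is quoted verbatim as Corollary~3.8 of \cite{Noo2} and immediately used. So there is no ``paper's own proof'' to compare against.

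That said, your argument is essentially the standard one and is correct. The existence step is clean: pulling back $\eta$ along $g$ gives a genuine space $P$ because $\eta$ is representable, the projection $P\to Y$ is a weak equivalence by the defining property of a universal weak equivalence, and CW approximation plus Whitehead lets you pick a section up to homotopy. The passage from the $2$-isomorphism in the pullback square to a concordance is fine, since $2$-isomorphic objects are concordant via the constant concordance.

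For uniqueness you have correctly identified the only delicate point: the projection $W\to Y\times[0,1]$ is a weak equivalence but not a fibration, so you cannot lift directly. Your proposed fix---factor through the mapping path space to obtain an acyclic Serre fibration and then use the right lifting property against the relative CW inclusion $Y\times\{0,1\}\hookrightarrow Y\times[0,1]$---is exactly right. One small thing to make explicit: after lifting to the fibrant replacement $W'$, you still need to land in $\Ho(\fX)$, not merely in $W'$. This is harmless because the mapping path space retracts onto $W$ (via $(w,\gamma)\mapsto w$), and composing the section with this retraction and then with $W\to\Ho(\fX)$ gives a genuine homotopy $h_0\simeq h_1$ restricting correctly at the endpoints.
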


In particular, there is a natural bijection between the set of
concordance classes $\fX [Y]$ and the set of homotopy classes of maps
$[Y; \Ho(\fX)]$ when $Y$ is a CW complex.

\begin{corollary}
Any two homotopy types of a topological stack are canonically
homotopy equivalent. Moreover, choosing homotopy types defines a functor
from the category of stacks over $\ttop$ which admit a homotopy type
to the homotopy category of spaces. This functor sends
$2$-isomorphic morphisms of stacks to identical homotopy classes of maps.
\end{corollary}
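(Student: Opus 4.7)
The plan is to deduce everything from Lemma~\ref{liftinglemma}, using the fact that concordance between maps of CW complexes is the same as homotopy. All three assertions amount to invoking the existence and uniqueness parts of the lifting lemma in different configurations.

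For the uniqueness of homotopy types, suppose $\eta_0\colon H_0\to\fX$ and $\eta_1\colon H_1\to\fX$ are two homotopy types. Applying the lifting lemma to $\eta_1$, regarded as a map from the CW complex $H_1$ into $\fX$, produces a map $\alpha\colon H_1\to H_0$ with $\eta_0\circ\alpha$ concordant to $\eta_1$; symmetrically one obtains $\beta\colon H_0\to H_1$ with $\eta_1\circ\beta$ concordant to $\eta_0$. Then $\eta_0\circ(\alpha\circ\beta)$ and $\eta_0\circ\id_{H_0}$ are both concordant to $\eta_0$, so the uniqueness clause of the lifting lemma forces $\alpha\circ\beta\simeq\id_{H_0}$, and likewise in the other order. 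Canonicity of the resulting homotopy equivalence follows from the same uniqueness clause.

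To construct the functor, for each morphism $f\colon\fX\to\fY$ I would apply the lifting lemma to $f\circ\eta_\fX\colon\Ho(\fX)\to\fY$ to obtain a homotopy class $\Ho(f)\colon\Ho(\fX)\to\Ho(\fY)$ characterised by $\eta_\fY\circ\Ho(f)$ being concordant to $f\circ\eta_\fX$. For a composable pair $\fX\to\fY\to\fZ$, both $\Ho(g)\circ\Ho(f)$ and $\Ho(g\circ f)$ are lifts of $g\circ f\circ\eta_\fX$ through $\eta_\fZ$, once one observes that the defining concordance for $\Ho(f)$ may be post-composed with $g$; uniqueness in the lifting lemma then identifies their homotopy classes. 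The identity morphism is treated the same way. For $2$-isomorphic morphisms $f\cong f'$, pulling the $2$-isomorphism back along the projection $\Ho(\fX)\times[0,1]\to\Ho(\fX)$ produces an explicit concordance between $f\circ\eta_\fX$ and $f'\circ\eta_\fX$, so any lift of one serves as a lift of the other, and uniqueness forces $\Ho(f)=\Ho(f')$ in the homotopy category.

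I do not anticipate a conceptual obstacle: everything is formal once Lemma~\ref{liftinglemma} is in hand. The one technical point that needs verification is that concordance of morphisms into a stack is preserved by post-composition with a morphism of stacks, so that the comparisons fed back into the uniqueness clause really are concordances in the sense required; this is immediate from the definition of concordance as a morphism out of a cylinder.
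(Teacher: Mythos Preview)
Your proof is correct and follows exactly the approach the paper takes: the paper's proof consists of a single sentence stating that the corollary follows immediately from Lemma~\ref{liftinglemma}, with the parenthetical remark that $2$-isomorphic morphisms are concordant. You have simply spelled out the details of that deduction.
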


\begin{proof}
  This corollary follows immediately from Lemma \ref{liftinglemma}
  (for the last sentence, note that $2$-isomorphic morphisms are
  concordant).
\end{proof}

We have not yet seen that a topological stack admits a homotopy type;
this is ans\-wered by Theorem \ref{noohistheoremdash} below.

Let $\fX$ be a topological stack with an atlas $X_0 \to \fX$.  This
determines a simplicial space $X_n = X_0 \times_{\fX} \cdots
\times_{\fX} X_0$ ($n+1$ copies) which is in fact the nerve of the
topological groupoid $X_1 = X_0 \times_{\fX} X_0 \rightrightarrows
X_0$. Let $\| X_{\bullet} \|$ be the \emph{thick} realization of the
simplicial space $X_{\bullet}$. The thick realization of a simplicial
space is obtained by forgetting the degeneracies and using only the
boundary maps. In most cases of interest, the thick geometric
realization and the usual geometric realization are homotopy
equivalent, see \cite[p. 308]{Segcat}.

\begin{proposition}[\cite{Noo2}, Theorem 3.11]
\label{noohistheorem}
  If $X_0 \to \fX$ is an atlas of a topological stack with associated
  simplicial space $X_{\bullet}$, then there is a universal weak
  equivalence $\| X_{\bullet} \| \to \fX$.
\end{proposition}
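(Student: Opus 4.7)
The plan has three stages: construct the canonical map $\eta: \|X_\bullet\| \to \fX$, reduce the universal weak equivalence property to a descent statement about \v{C}ech nerves, and then verify that descent statement via a Segal-type argument.

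\textbf{Stage 1: Constructing $\eta$.} For each $n \geq 0$ the space $X_n = X_0 \times_\fX \cdots \times_\fX X_0$ carries $n+1$ a priori distinct projections to $X_0 \to \fX$, but all of them are canonically $2$-isomorphic via the groupoid structure on $X_1 \rightrightarrows X_0$. Choosing one projection (say the first) gives representable morphisms $X_n \to \fX$ (representability is inherited from $X_0 \to \fX$ by base change), and the resulting maps agree, up to coherent $2$-isomorphism, with the composites of face maps. Since the thick realization is the colimit over $\Delta_{\text{inj}}^{\mathrm{op}}$ of the underlying semi-simplicial diagram, the universal property of that colimit (applied in $\topstacks$) produces the desired morphism $\eta: \|X_\bullet\| \to \fX$, which is representable because each $X_n \to \fX$ is.

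\textbf{Stage 2: Reduction to a \v{C}ech nerve.} Let $f: Y \to \fX$ be an arbitrary test map from a space. Because each $X_n \to \fX$ is representable, each pullback $Y_n := Y \times_\fX X_n$ is an honest space, and $Y_\bullet$ is a simplicial space. The thick realization is an iterated colimit (a coend with the simplices $\Delta^n$), and pullback along a representable morphism of stacks preserves such colimits; more concretely, one writes out $\|X_\bullet\|$ as a quotient of $\bigsqcup_n X_n \times \Delta^n$ by the face-map relations and checks directly that pulling back by $Y \to \fX$ produces $\|Y_\bullet\|$. This gives a natural homeomorphism
\[
Y \times_\fX \|X_\bullet\| \;\cong\; \|Y_\bullet\|,
\]
and the diagram identifies the map to be analyzed with the augmentation $\varepsilon: \|Y_\bullet\| \to Y$. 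Observe that $Y_\bullet$ is nothing but the \v{C}ech nerve of the map $Y_0 \to Y$, and this latter map has local sections since having local sections is preserved under base change and $X_0 \to \fX$ is an atlas.

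\textbf{Stage 3: \v{C}ech descent.} The remaining claim is that for any map of spaces $E \to B$ having local sections, the augmentation from the thick realization of its \v{C}ech nerve to $B$ is a weak homotopy equivalence. This is the heart of the proposition and the step I expect to be the main obstacle. One chooses, for each point $b \in B$, a local section defined on some neighborhood; shrinking to a numerable open cover $\{U_\alpha\}$ of $B$ and using a subordinate partition of unity, one builds a map $B \to \|Y_\bullet\|$ (well-defined up to homotopy) splitting $\varepsilon$ up to homotopy, and then a further partition-of-unity argument over $\|Y_\bullet\|$ shows the other composite is also homotopic to the identity. Equivalently, one can check directly that $\varepsilon$ is a quasi-fibration with contractible fibres: over any point $b \in B$ with preimage $F \subset E$, the fibre of $\varepsilon$ is the thick realization of the \v{C}ech nerve of $F \to \{b\}$, which is $F$ joined to itself countably often and hence contractible.

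Combining the three stages yields the proposition: $\eta$ is representable and, upon any test pullback, becomes the weakly trivial augmentation of a \v{C}ech nerve, hence $\eta$ is a universal weak equivalence.
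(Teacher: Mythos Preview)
The paper does not supply its own proof of this proposition; it simply cites Noohi \cite{Noo2}, Theorem 3.11. Your three-stage outline is essentially the standard argument that Noohi (and before him Segal and Dugger--Isaksen in closely related settings) gives, so in that sense you are on the right track and there is nothing to ``compare'' against in the paper itself.

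That said, two of your stages are softer than you indicate. In Stage~2 you assert that pullback along $Y\to\fX$ commutes with the thick realization. Pullback in $\ttop$ does not commute with arbitrary colimits, so ``checks directly'' is hiding real work. The honest argument builds $\|X_\bullet\|$ via its skeletal filtration as a sequential colimit of pushouts along closed cofibrations $\partial\Delta^n\times X_n \hookrightarrow \Delta^n\times X_n$; pullback along an arbitrary map preserves pushouts along closed cofibrations and sequential colimits along closed inclusions, which is what you need.

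In Stage~3 your first sketch (numerable cover, partition of unity) is not available: $Y$ is an \emph{arbitrary} topological space, with no paracompactness hypothesis. Since you only need a \emph{weak} homotopy equivalence, the fix is to probe with maps from compact CW complexes (spheres and disks), where the pulled-back cover admits a finite subcover and hence a partition of unity; alternatively, use the extra-degeneracy argument locally (over any open on which $Y_0\to Y$ has a section the augmented \v{C}ech nerve is split and its realization is contractible) and then patch via a local-to-global weak equivalence criterion. Your quasi-fibration sketch has the right fibre computation but you still owe the verification of the quasi-fibration property, which again comes down to the same local triviality plus patching.
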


The space $\|X_{\bullet}\|$ is in general not a CW complex. To produce
a homotopy type, we need a small extra argument. The realization of
the singular simplicial set $| Sing_{\bullet} (\| X_{\bullet} \|)|$ is
a CW complex and the evaluation map $| Sing_{\bullet} (\| X_{\bullet}
\|)| \to \| X_{\bullet} \|$ is both a weak homotopy equivalence and a
Serre fibration. Therefore, the composition $| Sing_{\bullet} (\|
X_{\bullet} \|)| \to \fX$ is a homotopy type. This shows:

\begin{theorem}\label{noohistheoremdash}
Any topological stack admits homotopy type.
\end{theorem}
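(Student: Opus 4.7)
The plan is to bootstrap from Proposition \ref{noohistheorem}, which already hands over most of the required data, and then replace the resulting space by a genuine CW complex. Concretely, given a topological stack $\fX$, choose any atlas $p: X_0 \to \fX$ and form the associated nerve $X_\bullet$ with $X_n = X_0 \times_\fX \cdots \times_\fX X_0$ ($n+1$ factors). Proposition \ref{noohistheorem} then produces a universal weak equivalence $\eta': \|X_\bullet\| \to \fX$ from the thick realization; the only defect is that $\|X_\bullet\|$ is typically not a CW complex, so it cannot itself serve as $\Ho(\fX)$.

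To correct this, I would compose $\eta'$ with the standard CW replacement $\varepsilon: |\Sing_\bullet(\|X_\bullet\|)| \to \|X_\bullet\|$ coming from the counit of the adjunction between realization and the total singular complex. Two classical facts are all that is needed here: the domain $|\Sing_\bullet(\|X_\bullet\|)|$ is a CW complex, and $\varepsilon$ is both a weak homotopy equivalence and a Serre fibration, hence an acyclic Serre fibration. The candidate homotopy type is then $\Ho(\fX) := |\Sing_\bullet(\|X_\bullet\|)|$ with structure map $\eta := \eta' \circ \varepsilon$.

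The real verification is that $\eta$ remains a universal weak equivalence after this replacement. For an arbitrary test map $g: Y \to \fX$ from a space $Y$, the pullback $Y \times_\fX \Ho(\fX)$ can be identified with $(Y \times_\fX \|X_\bullet\|) \times_{\|X_\bullet\|} \Ho(\fX)$, and its projection to $Y$ factors as
\[
(Y \times_\fX \|X_\bullet\|) \times_{\|X_\bullet\|} \Ho(\fX) \longrightarrow Y \times_\fX \|X_\bullet\| \longrightarrow Y.
\]
The first arrow is the pullback of the acyclic Serre fibration $\varepsilon$ and is therefore itself a weak equivalence; the second arrow is a weak equivalence precisely because $\eta'$ is a universal weak equivalence. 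Composing the two gives the required conclusion. The main subtlety in the argument is exactly this stability point: the CW replacement must be realized by a map whose pullback along \emph{any} space over $\|X_\bullet\|$ is still a weak equivalence, which is why one goes through $\Sing_\bullet$ (obtaining an acyclic fibration) rather than using an arbitrary CW approximation.
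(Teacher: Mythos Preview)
Your argument is correct and is essentially identical to the paper's own proof: both start from Proposition \ref{noohistheorem}, replace $\|X_\bullet\|$ by $|\Sing_\bullet(\|X_\bullet\|)|$ via the evaluation map, and use that this map is a weak equivalence and a Serre fibration to conclude that the composite is a universal weak equivalence. You spell out the pullback stability check a bit more explicitly than the paper does, but the approach is the same.
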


\subsection*{Homotopy types and group actions}
There is a pleasant interaction between the notion of the homotopy
type of a stack and more familiar topological constructions.

Firstly, if $X$ is a CW complex then we can consider $X$ as a
topological stack.  Clearly, the identity map $X \to X$ is a universal
weak equivalence and thus $\Ho(X) \simeq X$.

An important class of examples of stacks are the \emph{(global) quotient
  stacks}.  Let $G$ be a topological group acting on a space $X$. The
quotient stack $X \hq G$ is defined as follows. If $Y$ is space, then
$X \hq G (Y)$ is the groupoid of triples $(P,p,f)$; $p:P \to Y$ a
principal $G$-bundle and $f:P \to X$ a $G$-equivariant map. The
isomorphisms are defined in the obvious way. There is a natural
morphism $q:X \to X \hq G$ defined as follows: Consider the trivial
principal $G$-bundle $pr_X:G \times X \to X$. Note that $G$ acts on $G
\times X$ only by group multiplication (and not on $X$!) and that the
action map $\mu:G \times X \to X$ is $G$-equivariant. Thus $(G \times
X, pr_X, \mu)$ is an element of $X \hq G (X)$, defining a morphism
$q:X \to X \hq G$. Note that $q$ is a principal $G$-bundle.

\begin{proposition}
The homotopy type of $X \hq G$ is homotopy equivalent to the Borel
construction $EG \times_G X$.
\end{proposition}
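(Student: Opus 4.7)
The plan is to apply Noohi's theorem (Proposition \ref{noohistheorem}) to the canonical atlas $q:X\to X\hq G$ described just before the statement, and then to identify the resulting thick realization with the usual bar-construction model of $EG\times_G X$.

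First I would check that $q:X\to X\hq G$ is genuinely an atlas: since $q$ was described as a principal $G$-bundle, it is representable and has local sections, so it satisfies the definition of an atlas for a topological stack. Next I would compute the iterated fiber products $X_n=X\times_{X\hq G}\cdots\times_{X\hq G}X$ ($n+1$ copies). Unwinding the definition of $X\hq G(Y)$ as triples $(P,p,f)$, a $Y$-point of $X\times_{X\hq G}X$ is a pair of maps $f_0,f_1:Y\to X$ together with an isomorphism between the associated trivial $G$-bundles with equivariant maps to $X$; such an isomorphism is precisely a map $\gamma:Y\to G$ with $f_1(y)=\gamma(y)\cdot f_0(y)$. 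This identifies $X\times_{X\hq G}X\cong G\times X$, and by iteration $X_n\cong G^n\times X$. The face and degeneracy maps are those of the standard action groupoid, so $X_\bullet$ is exactly the simplicial bar construction $B_\bullet(\ast,G,X)$ (equivalently, the nerve of the translation groupoid $G\times X\rightrightarrows X$).

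By Proposition \ref{noohistheorem}, the thick realization $\|X_\bullet\|$ carries a universal weak equivalence to $X\hq G$, and after applying $|\Sing_\bullet(-)|$ as in the proof of Theorem \ref{noohistheoremdash} we obtain a CW model for $\Ho(X\hq G)$. Finally I would identify $\|B_\bullet(\ast,G,X)\|$ with $EG\times_G X$: this is the standard Milnor-Milgram-May bar model, where the thick realization of the simplicial space $G^\bullet\times X$ is exactly the Borel construction for any topological group $G$ acting on $X$ (one may, for instance, compare it with $\|B_\bullet(\ast,G,\ast)\|\times_G X$ via the projection $X\to\ast$, noting that $\|B_\bullet(\ast,G,\ast)\|$ is a model of $EG$ on which $G$ acts freely, and that thick realization commutes with the quotient since the $G$-action is levelwise free on $G^\bullet\times X$ through the left $G$-factor).

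The main obstacle, and the only step needing care, is this last identification of the thick realization of the bar construction with $EG\times_G X$: the usual geometric realization requires $G$ to be well-pointed for the natural map to be a weak equivalence, but working with the \emph{thick} realization throughout (as the excerpt emphasizes, cf.\ \cite[p.\ 308]{Segcat}) avoids degeneracy issues and makes the comparison hold for arbitrary topological groups $G$. Everything else is a formal consequence of the atlas computation and Noohi's theorem.
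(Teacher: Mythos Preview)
Your proof is correct but takes a genuinely different route from the paper. You go through the atlas $q:X\to X\hq G$, identify the associated simplicial space with the bar construction $B_\bullet(\ast,G,X)$, invoke Noohi's theorem (Proposition~\ref{noohistheorem}) to obtain a universal weak equivalence from its thick realization, and then identify that realization with $EG\times_G X$. The paper instead bypasses the simplicial machinery entirely: it directly writes down a morphism $\eta: EG\times_G X \to X\hq G$ (the $G$-equivariant projection $EG\times X\to X$ together with the principal bundle $EG\times X\to EG\times_G X$ is by definition an object of $(X\hq G)(EG\times_G X)$), and then observes that $\eta$ is the fiber bundle with fiber $EG$ associated to the principal $G$-bundle $X\to X\hq G$. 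Since $EG$ is contractible, any pullback of $\eta$ along a map from a space is a weak equivalence, so $\eta$ is a universal weak equivalence straight from the definition. Your approach makes the link to the folklore bar-construction model of the homotopy type explicit and shows where that model comes from; the paper's argument is shorter and sidesteps the one delicate point you flagged, namely justifying that $\|B_\bullet(\ast,G,X)\|\simeq EG\times_G X$.
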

\begin{proof}
The projection map $EG \times X \to X$ is $G$-equivariant while the
quotient map $EG \times X \to EG \times_G X$ is a principal
$G$-bundle, so both maps together define a morphism
\[
\eta: EG \times_G X \to X \hq G.
\]
Clearly, $\eta$ is a fiber bundle with structure group $G$ and fiber
$EG$: it is associated to the principal bundle $X \to X \hq G$.
Therefore, if $Y$ is a space and $Y \to X \hq G$ a map, then the
pullback $Y \times_{X \hq G} (EG \times_G X) \to Y$ is a fiber bundle
with contractible fibers, hence a weak homotopy equivalence. Hence
$\eta$ is a universal weak equivalence.
\end{proof}

An important quotient stack is the moduli stack $\fM_{g,n}$ of
smooth complex curves. It is the stack quotient of the Teichm\"uller
space $\fT_{g,n}$ by the action of the mapping class group
$\Gamma_g^n$ of isotopy classes of orientation preserving
diffeomorphism of a genus $g$ surface with $n$ marked points.  Hence
\[
\Ho(\fM_{g,n}) \simeq E\Gamma_g^n \times_{\Gamma_g^n} \fT_{g,n}
             \simeq B\Gamma_g^n,
\]
because the Teichm\"uller space is contractible.

We will have occasion to deal with group actions on stacks.  Suppose
$\fX$ is a topological stack with a strict action of a group $G$ (i.e.
the action is not just up to coherent 2-morphisms). We will not have
to care about group actions which are not strict. Given a strict
$G$-action on $\fX$ and a $G$-space $Y$, the notion of an equivariant
morphism $Y \to \fX$ is well-defined.

There are two equivalent descriptions of principal $G$-bundles over a
stack $\fX$: as a morphism $\fX \to \ast \hq G$, or as a stack $\fP$
with a strict $G$-action and a $G$-invariant representable morphism
$\fP \to \fX$ such that the pullback $\fP \times_{\fX} X \to X$ along
any morphism $X \to \fX$ is a principal $G$-bundle in the usual sense.
An analogous remark applies to arbitrary fiber bundles.

The quotient stack $\fX \hq G$ is defined in the same way as $X \hq
G$ for spaces $X$: for a space $Y$, an object of $(\fX \hq G) (Y)$
consists of a principal $G$-bundle $P \to Y$ and a $G$-equivariant
morphism $P \to \fX$. Again, it is clear that $ \fX \to \fX \hq G$ is a
principal $G$-bundle.

\begin{proposition}
Let $\fX$ be a topological stack with a strict $G$-action. Then the
following hold.
\begin{enumerate}
\item $\fX \hq G$ is a also a topological stack.
\item There exists a homotopy type $\Ho(\fX)$ which is a principal
bundle on $\Ho(\fX \hq G)$ such that the universal morphism $\Ho(\fX )
\to \fX$ is $G$-equivariant.
\item $\Ho(\fX \hq G) \simeq EG \times_G \Ho(\fX)$.
\end{enumerate}
\end{proposition}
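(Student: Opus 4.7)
The plan is to equivariantize an atlas of $\fX$ and then push the homotopy-type construction through the group quotient.

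\emph{Part (1).} Given any atlas $p \colon X_0 \to \fX$, I would show that the composition $X_0 \to \fX \to \fX \hq G$ is an atlas for $\fX \hq G$. The morphism $\fX \to \fX \hq G$ is the tautological principal $G$-bundle, hence representable with local sections, and both properties are preserved under composition, so $X_0 \to \fX \hq G$ is an atlas.

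\emph{Part (2).} First I would equivariantize the atlas by replacing $X_0$ with $X_0' := G \times X_0$, letting $G$ act by left multiplication on the first factor and setting $p'(g,x) := g \cdot p(x)$. Then $p'$ is a $G$-equivariant atlas, the associated simplicial space $X_\bullet'$ (nerve of the groupoid given by iterated fibre product over $\fX$) is a simplicial $G$-space, and by Proposition~\ref{noohistheorem} the thick realization $\|X_\bullet'\|$ carries a $G$-equivariant universal weak equivalence to $\fX$. To promote this to a CW complex on which $G$ acts freely, I would set
\[
\Ho(\fX) := EG \times \bigl| \Sing_\bullet(\|X_\bullet'\|) \bigr|
\]
with the diagonal $G$-action. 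The composition $\Ho(\fX) \to \|X_\bullet'\| \to \fX$ is $G$-equivariant and a universal weak equivalence: the projection $EG \times Y \to Y$ pulls back along any $T \to Y$ to the weak equivalence $EG \times T \to T$, the counit $|\Sing_\bullet(Z)| \to Z$ is a Serre fibration and weak equivalence, and the last factor is a universal weak equivalence by Proposition~\ref{noohistheorem}. Since $G$ acts freely on $EG$ the projection $\Ho(\fX) \to \Ho(\fX)/G$ is a principal $G$-bundle, and I would define $\Ho(\fX \hq G) := \Ho(\fX)/G$.

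\emph{Part (3).} The equivariant morphism $\Ho(\fX) \to \fX$ descends to a map $\Ho(\fX)/G \to \fX \hq G$; I would verify it is a universal weak equivalence, which simultaneously shows that $\Ho(\fX)/G$ is a homotopy type for $\fX \hq G$ and identifies $\Ho(\fX \hq G)$ up to homotopy equivalence. Given a test map $T \to \fX \hq G$, pulling back the principal bundle $\fX \to \fX \hq G$ produces $P \to T$ with an equivariant map $P \to \fX$. Unwinding the definition of the quotient stack yields a canonical identification
\[
T \times_{\fX \hq G} \bigl(\Ho(\fX)/G\bigr) \;\cong\; \bigl(P \times_{\fX} \Ho(\fX)\bigr)/G,
\]
mapping to $P/G = T$. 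The arrow $P \times_{\fX} \Ho(\fX) \to P$ is a $G$-equivariant weak equivalence (pullback of a universal weak equivalence) between spaces on which $G$ acts freely and properly, so passing to $G$-quotients still yields a weak equivalence. Finally, applying the preceding proposition to the $G$-space $\Ho(\fX)$ gives $\Ho(\fX \hq G) \simeq \Ho(\fX) \hq G \simeq EG \times_G \Ho(\fX)$, proving (3).

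\emph{Main obstacle.} The technical heart is the descent step in Part (3): showing that a $G$-equivariant universal weak equivalence $\fY \to \fZ$ of $G$-stacks induces a universal weak equivalence $\fY \hq G \to \fZ \hq G$. This hinges on identifying the pullback along any $T \to \fZ \hq G$ as a quotient by the free $G$-action on the associated principal bundle, and invoking the principle that a $G$-equivariant weak equivalence between spaces with free proper $G$-actions descends to a weak equivalence on orbit spaces.
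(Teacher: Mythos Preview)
Your argument is correct, but it runs in the opposite direction from the paper's. The paper works \emph{top-down}: it first invokes Theorem~\ref{noohistheoremdash} to choose a homotopy type $\Ho(\fX\hq G)\to\fX\hq G$, and then \emph{defines} $\Ho(\fX)$ as the pullback $\Ho(\fX\hq G)\times_{\fX\hq G}\fX$. Since $\fX\to\fX\hq G$ is a principal $G$-bundle, this pullback is automatically a principal $G$-bundle over $\Ho(\fX\hq G)$, and since universal weak equivalences are stable under base change, the projection to $\fX$ is automatically a $G$-equivariant universal weak equivalence. Part (3) is then a one-liner: $\Ho(\fX)/G=\Ho(\fX\hq G)$ holds by construction, and $EG\times_G\Ho(\fX)\to\Ho(\fX)/G$ is a fibre bundle with contractible fibre $EG$.

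You instead work \emph{bottom-up}: you manufacture a $G$-equivariant homotopy type for $\fX$ by hand (equivariantizing the atlas and crossing with $EG$), and then you must \emph{prove} that its $G$-quotient is a homotopy type for $\fX\hq G$. This forces you to carry out the descent argument you flag as the main obstacle --- identifying the pullback along a test map $T\to\fX\hq G$ with a free $G$-quotient and then using that equivariant weak equivalences between free $G$-spaces descend. That argument is fine, but the paper bypasses it entirely by reversing the order of construction. The trade-off: the paper's proof is shorter and uses nothing beyond stability of universal weak equivalences under pullback, while your route is more explicit about where the equivariant structure on $\Ho(\fX)$ comes from and isolates a reusable descent principle.
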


\begin{proof} Let $X \to \fX$ be an atlas, i.e. a representable
  morphism which admits local sections. Because $\fX \to \fX\hq G$ is
  a bundle, the composite $X \to \fX \hq G$ is clearly a
  representable morphism with local sections.  This shows (1).

For (2), choose a homotopy type $\Ho(\fX \hq G) \to \fX
\hq G$ and consider the fiber-square 
\[
\xymatrix{ \Ho(\fX \hq G) \times_{\fX \hq G} \fX \ar[r] \ar[d] &
\fX \ar[d]\\
\Ho(\fX \hq G ) \ar[r] & \fX \hq G .}
\]
Because the right vertical map is a principal $G$-bundle, so is the
left vertical map. Because the bottom horizontal map is a universal
weak equivalence, the top horizontal is also a universal weak
equivalence. Thus the space $\Ho(\fX \hq G) \times_{\fX \hq G} \fX \to
\fX$ is a homotopy type for $\fX$ and is also $G$-equivariant, which
shows (2).

For (3), observe that the natural map $EG \times_G
\Ho(\fX) \to \Ho(\fX) /G = \Ho(\fX \hq G)$ is a fiber bundle with
fiber $EG$, hence a weak homotopy equivalence.
\end{proof}

\subsection*{Homology of a topological stack}

The definition of the homotopy type of a stack is justified both by
the above examples and by the fact, which we now explain, that the
space $\Ho(\fX)$ has the correct (co)homology.  A topological stack
has singular (co)homology and sheaf cohomology.  These turn out to be
canonically isomorphic to the (co)homology of the space $\Ho(\fX)$.

The following definition of singular homology for stacks is from
\cite{Behr}.  An atlas $X\ \to \fX$ determines a simplicial space
$X_\bullet$.  Applying $\Sing_\bullet$ produces a bisimplicial set
which generates a double complex $C_{\bullet,\bullet}(\fX)$ of Abelian
groups.  The singular homology $H_{*}^{sing}(\fX)$ of $\fX$ is defined
to be the homology of the total complex
$Tot(C_{\bullet,\bullet}(\fX))$. It can be shown that this is
independent of the choice of atlas. There is a map of simplicial spaces 
\[(p \mapsto |\Sing_{\bullet} X_p|) \to (p \mapsto X_p); 
\]
the homology of the (realization of the) left hand side is
$H_{*}^{sing}(\fX)$, the homology of the right-hand side is
$H_{*}(\Ho(\fX))$. A straightforward application of the homology
spectral sequence of a simplicial space \cite{segclass} shows that the
induced map on homology is an isomorphism. Thus we have a natural
isomorphism
\begin{equation}\label{isohomology}
  H_{*}^{sing}(\fX) = H_*(Tot(C_{\bullet,\bullet}(\fX))) \cong H_*(|X_\bullet|) = H_*(\Ho(\fX)).
\end{equation}

The singular cohomology of $\fX$ is defined analogously and it
agrees with the sheaf cohomology by standard arguments.  By the same
reasoning as before, the singular cohomology is canonically isomorphic
to $H^*(\Ho(\fX))$.

For a topological stack $\fX$, let $\fX^{coarse}$ be the coarse moduli
space (this is the orbit space of a groupoid presenting $\fX$).  There
is a natural map $\fX \to \fX^{coarse}$ (which is almost never
representable) and the composition
\begin{equation}\label{universalmap}
\mu_{\fX}:\Ho(\fX) \to \fX \to \fX^{coarse}
\end{equation}
is a rational homology equivalence when $\fX$ is an orbifold (see e.g.
\cite{Haef}).

When $\fX$ is an orbifold it has an orbifold fundamental group
$\pi_1^{orb}\fX$ (see \cite{Moer}), and there is a natural
isomorphism $\pi_1 \Ho(\fX) \cong \pi_1^{orb}\fX$.  One can introduce
coefficient systems, and the isomorphisms (\ref{isohomology}) of
(co)homology hold also for twisted coefficients.

\section{The Pontrjagin-Thom construction for differentiable stacks}\label{ptconstructionsection}

In this section we describe an extension of the classical
Pontrjagin-Thom construction of homotopy-theoretic wrong-way maps to
the setting of differentiable stacks.

\subsection{Preliminaries on stable vector bundles and Thom spectra}

If $W \to X$ is a real vector bundle then the \emph{Thom space} of
$W$, denoted $X^W$ is the space obtained by taking the fiberwise
one-point compactification of $W$ and then collapsing the section at
infinity to the basepoint.  (If $X$ is compact then this is simply the
one-point compactification of $W$.)

A \emph{virtual vector bundle} on a space $X$ is a pair $(E_0, E_1)$
of real vector bundles on $X$; one should think of it as the formal
difference $E_0 - E_1$, and we will sometimes use this more suggestive
notation.  The \emph{rank} of $(E_0, E_1)$ is the difference $\dim E_0
-\dim E_1$.  An isomorphism $(E_0,E_1) \to (F_0,F_1)$ is represented by
a pair $(V,\theta)$ where $V$ is a vector bundle and 
\[
\theta: E_0 \oplus F_1 \oplus V \to E_1\oplus F_0 \oplus V
\]
is a bundle isomorphism.  Two pairs $(\theta,V)$, $(\theta \dash,
V\dash)$ represent the same morphism if there exists a vector bundle
$U$ such that $V \dash = V \oplus U$ and $\theta \dash = \theta \oplus
\id_U$ (and then take the equivalence relation that this generates).
The composition of $\theta: E_0 \oplus F_1 \oplus V \to E_1 \oplus F_0
\oplus V$ and $\phi: F_0 \oplus G_1 \oplus W \to F_1 \oplus G_0 \oplus
W$ is defined to be $F_1 \oplus V \oplus W$ together the composition
\begin{align*}
E_0 \oplus F_1 \oplus G_1 \oplus V \oplus W 
  \stackrel{\theta \oplus \mathrm{id}_{G_1 \oplus W}}{\longrightarrow} 
E_1 \oplus F_0 \oplus G_1 \oplus V \oplus W \\
   \stackrel{\phi \oplus \mathrm{id}_{E_1 \oplus V}}{\longrightarrow} 
E_1 \oplus F_1 \oplus G_0 \oplus V \oplus W.
\end{align*}
The category of virtual vector bundles over a
fixed space is a groupoid; these form a presheaf of groupoids on the
site $\ttop$.  Let $\fK$ denote the stackification of the above
presheaf. The objects of this stack are slightly more general than
virtual bundles; they can locally be presented as formal differences
of vector bundles, but globally this might be impossible.  Objects of
$\fK$ are called \emph{stable vector bundles}.

Let $\fK_d$ denote the full substack consisting of virtual bundles of rank
$d$.  For $n \geq d$, let $\ast \to \fK_d$ be the arrow representing
the stable vector bundle $(\bR^n;\bR^{n-d})$.  It is easy to see that
this is an atlas for $\fK_d$ (as a topological stack) and in fact
$\fK_d$ is equivalent to the stack $* \hq O$.  Thus 
\[
\Ho(\fK) = \coprod_{d \in \bZ} \Ho(\fK_d) \simeq \bZ \times BO,
\]
as expected, and $2$-isomorphism classes of morphisms $X \to \fK$
correspond to homotopy classes $X \to \bZ \times BO$.

For any map $c_W:X \to \{d \}\times BO$ which classifies a stable
vector bundle $W$ of rank $d$, there is an associated \emph{Thom
  spectrum} $\bTh(W)$, produced as follows.  There is an exhaustive
filtration $X_{-d} \subset X_{1-d} \subset \cdots \subset X$, where
$X_n := c_{W}^{-1}(\{d \} \times BO_{d+n})$.  Let $W_n := c_{W}^{*}
\gamma_{d+n}$ be the pullback of the $d+n$-dimensional universal
vector bundle. Clearly, there is an isomorphism $W_{n+1} |_{X_n} \cong
\bR \oplus W_n$.  The $n^{th}$ space of $\bTh(W)$ is $X_n^{W_n}$ and
the structure maps are
\[
\Sigma X_n^{W_n} \cong X_n^{\bR \oplus W_n } \cong X_n^{W_{n+1}|_{X_n}}
\hookrightarrow X_{n+1}^{W_{n+1}}.
\]
The homotopy type of the spectrum $\bTh(W)$ depends only on the
homotopy class of $c_{W}$, which can be viewed as an element in the
real $K$-theory group $ KO^0 (X)$.  Furthermore, when $W$ is
representable by an actual vector bundle $W_0$ then the Thom spectrum
is homotopy equivalent to the suspension spectrum $\suspinf X^{W_0}$
of the Thom space.  The reader who wants to know more details about
Thom spectra is advised to consult \cite{Rudyak}, chapter IV, \S 5.

\subsection{The classical Pontrjagin-Thom construction}

We briefly recall the classical construction.  Let $f:M \to N$ be a
proper smooth map between smooth manifolds of codimension $d$ (i.e.,
$\dim N - \dim M = d$).  The normal bundle
\[
\nu(f) := f^* TN - TM
\] 
is a virtual vector bundle of dimension $d$ on $M$.  For $n$ large
enough there exists an embedding $j:M \hookrightarrow \bR^n \times N$ such that
$pr_N \circ j = f$.  The virtual bundle $\nu(j) - \bR^n$ is
canonically isomorphic to $\nu(f)$.

Choose a tubular neighborhood $U$ of $j(M)$, identify $U\cong \nu(j)$,
and define a map $\bR^n \times N \to M^{\nu(j)}$ as follows: if a
point lies in $U$ then it is mapped to the corresponding point in
$\nu(j) \subset M^{\nu(j)}$; all points outside $U$ are mapped to the
basepoint of $M^{\nu(j)}$.  Because $f$ is proper, this construction
extends to a map $\Sigma^n N_+ \to M^{\nu(j)}$.  The space $M^{\nu(j)}$
is the $n^{th}$ space of the spectrum $\bTh(\nu(f))$ and letting $n$
tend to infinity defines a map of spectra
\[
\PT_f:\suspinf N_+ \to \bTh(\nu(f))
\]
which is the classical Pontrjagin-Thom map.  Recall that the functor
$\suspinf$ from spaces to spectra is left adjoint to the functor
$\loopinf$.  The adjoint map of $\PT_f$ is a map of spaces $N \to
\loopinf \bTh(\nu(f))$, which we also denote by $\PT_f$, because there
is no risk of confusion.  The homotopy class of $\PT_f$ does not depend
on the choices involved.  The Pontrjagin-Thom map can be used to define
umkehr maps in cohomology, see section \ref{comparisontautological}.

\subsection{Normal bundles for stacks and statement of the theorem}

To extend the Pontrjagin-Thom construction to stacks one must be able
to define the normal bundle of a morphism.

Let $f:\fX \to \fY$ be proper representable morphism of differentiable
stacks. The \emph{codimension} $d$ of $f$ is by definition $d=\dim(Y)
- \dim(Y \times_{\fY} \fX)$, where $Y \to \fY$ is an atlas. Let $Y \to
\fY$ be an atlas and let $X :=\fX \times_{\fY} Y \to \fX$ be the
induced atlas for $\fX$. The map $f$ pulls back to a map $f_Y:X \to Y$
which is a proper smooth map.  The normal bundle $f^* TY - TX$ is a
virtual vector bundle on $X$, and so it is classified by a morphism $X
\to \fK_d$.  Since normal bundles are natural with respect to pullback
along submersions, this morphism descends to a morphism
\[
\nu(f):\fX \to \fK_d
\]
Taking homotopy types produces a map $\Ho(\fX) \to BO$ which then
yields a Thom spectrum $\bTh (\nu(f))$.

If $N$ is a manifold and $g:N \to \fY$ is a map which is transversal
to $f$.  Then we have a pullback diagram
\[
\xymatrix{N \times_{\fY} \fX \ar[r]^-{h} \ar[d]^{f_N} & \fX \ar[d]^{f}\\
N \ar[r]^{g} & \fY,}
\]
where $f_N$ is a proper map of manifolds.  Thus we have a
Pontrjagin-Thom map $\PT_{f_N}: N \to \loopinf \bTh (\nu(f_N))$.
There is an induced morphism $\nu(f_N) \to \nu(f)$ of stable vector
bundles which covers the map $h$ and induces a map $\loopinf \bTh
(\nu(f_N)) \to \loopinf \bTh (\nu(f))$.  Finally recall that $g$ has a
canonical (up to homotopy) lift $g \dash : Y \to \Ho(\fY)$.

\begin{definition}
  Let $f:\fX \to \fY$ be a proper representable morphism of
  differentiable stacks. A \emph{Pontrjagin-Thom map} for $f$ is a map
  $\PT_f:\Ho(\fY) \to \loopinf \bTh (\nu(f))$ such that the following
  diagram is homotopy-commutative:
\[
\xymatrix{ \loopinf \bTh (\nu(f_N)) \ar[r] &  \loopinf \bTh
(\nu(f))\\
N \ar[u]^{\PT_{f_N}} \ar[r]^{g \dash} & \Ho(\fY) \ar[u]^{\PT_f}}
\]
where $g'$ is a lift of a map $g:N \to \fY$ that is transveral to $f$.
\end{definition}

The following is the main result of this section.

\begin{theorem}\label{pontthom}
  Let $f: \fX \to \fY$ be a proper representable morphism of
  differentiable stacks with $\fY$ a local quotient stack (see
  Definition \ref{deflocstack} below). Then there exists a
  Pontrjagin-Thom map
\[
\PT_f : \Ho(\fY) \to \loopinf \bTh(\nu(f)).
\]
\end{theorem}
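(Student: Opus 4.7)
The plan is to reduce to the classical Pontrjagin--Thom construction by pulling back along an appropriate atlas, first in the global-quotient case and then gluing over a cover.

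First I would treat the case where $\fY = M \hq G$ is a global quotient of a smooth manifold $M$ by a compact Lie group $G$. Since $f$ is representable, the pullback $X := \fX \times_{\fY} M$ is a smooth manifold carrying a smooth $G$-action, and the induced $G$-equivariant map $f_M : X \to M$ is proper and smooth of codimension $d$. By the Mostow--Palais equivariant embedding theorem, there exists a finite-dimensional orthogonal $G$-representation $V$ and a smooth $G$-equivariant embedding $j : X \hookrightarrow M \times V$ lying over $f_M$. An equivariant tubular neighborhood of $j(X)$ yields a $G$-equivariant collapse map $S^V \wedge M_+ \to X^{\nu(j)}$, where properness of $f_M$ is used so that the complement of the tubular neighborhood is carried to the basepoint. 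Applying the Borel construction $EG_+ \wedge_G (-)$ and passing to the colimit over inclusions of $G$-representations (or equivalently over a filtration of $EG$), one obtains a map
\[
\Ho(\fY) \simeq EG \times_G M \longrightarrow \loopinf \bTh(\nu(f)),
\]
with the target computed from the classifying morphism $\nu(f) : \fX \to \fK_d$ constructed before the theorem. Compatibility with the classical construction after pullback along a transverse $g : N \to \fY$ follows from the naturality of equivariant tubular neighborhoods: any such $g$ lifts (up to homotopy) to a $G$-equivariant map from a principal $G$-bundle over $N$ to $M$, and the equivariant Pontrjagin--Thom map restricts to the classical one on a chosen local trivialization.

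For a general local quotient stack $\fY$, I would choose a cover of $\fY$ by open substacks $\{\fY_\alpha\}$, each presented as a global quotient $M_\alpha \hq G_\alpha$ for a compact Lie group $G_\alpha$. The previous step produces Pontrjagin--Thom maps $\PT_{f_\alpha}$ on each $\Ho(\fY_\alpha)$. To assemble these into a single map out of $\Ho(\fY)$, I would replace $\fY$ up to universal weak equivalence by the homotopy colimit (\v{C}ech/bar construction) of the cover $\{\fY_\alpha\}$ and its iterated intersections, using that the homotopy type of each intersection is again computed by the previous step because intersections of local quotient stacks are again local quotient stacks. The individual maps $\PT_{f_\alpha}$, together with the canonical compatibility isomorphisms on overlaps supplied by naturality, define a map out of this homotopy colimit, and hence out of $\Ho(\fY)$.

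The main obstacle is the gluing step: a priori the local Pontrjagin--Thom maps agree on overlaps only up to homotopies that depend on choices of tubular neighborhoods and of equivariant embeddings. To avoid bookkeeping a full simplicial system of higher coherences, I would exploit contractibility of the space of choices at each level (any two equivariant tubular neighborhoods are equivariantly isotopic, and the space of $G$-equivariant embeddings of $X$ into sufficiently large $G$-representations is contractible in the colimit) so that the pattern of maps on nerves of the cover can be rectified to a strictly compatible one. In practice this amounts to building the map \v{C}ech-wise: work one simplicial degree at a time, and at each stage extend a partially-defined coherent system using the fact that the fiber of $\loopinf \bTh(\nu(f)|_{\fY_{\alpha_0\dots\alpha_k}}) \to \loopinf \bTh(\nu(f)|_{\fY_{\alpha_0\dots\alpha_{k-1}}})$ is connected enough to absorb the indeterminacy. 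The verification of the defining compatibility condition with transverse test maps $g : N \to \fY$ then reduces to the already-established global-quotient case by restricting to a chart containing the image of $g$ (after subdividing $N$ if necessary by a partition of unity argument).
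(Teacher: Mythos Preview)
Your overall strategy is sound and could be made to work, but it diverges from the paper's route in a way worth noting. The paper does \emph{not} construct local Pontrjagin--Thom maps and then glue them. Instead it first builds a single global receptacle for embeddings: a \emph{universal countably-dimensional vector bundle} $\fH \to \fY$ (Proposition~\ref{univvectbund}), which exists on any local quotient stack and is unique up to isometry. It then proves a Whitney-type result (Proposition~\ref{whitney}) producing a single global ``fat embedding'' $\fX \hookrightarrow \fH$ over $\fY$; the partition-of-unity gluing and the contractibility argument happen here, at the level of embeddings, not at the level of PT maps. Once this global embedding is in hand, passing to homotopy types gives $\Ho(\fH) \cong \Ho(\fY) \times \bR^\infty$, and the collapse construction is then essentially classical---the locally defined collapse maps agree on overlaps \emph{on the nose} because they come from one global embedding.

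What this buys the paper is that the delicate coherence problem you flag is pushed entirely into Proposition~\ref{whitney}, where it is handled by an explicit barycentric formula over the nerve of the cover (choose compatible maps $h_S:\Delta_S \to \sF_S$ into the contractible spaces of fat embeddings, then set $j(x)=h_{S(x)}(\sum_i \lambda_i(f(x))\{i\})(f(x))$). Your proposal instead attempts the rectification one level up, on the PT maps themselves; this is not wrong, but your description of the obstruction-theoretic step (``the fiber \ldots\ is connected enough to absorb the indeterminacy'') is too vague to stand as written, and in practice you would likely end up reinventing the universal-bundle device to make the targets of the local maps literally agree rather than merely up to identification. If you want to complete your version, the cleanest fix is to imitate the paper: first glue the \emph{embeddings} (into a common infinite-dimensional bundle) and only then collapse.
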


The map $\PT_f$ is unique in the sense that it depends on a contractible space of choices.
The main ingredient in the proof of this theorem is a variant of the Whitney
Embedding Theorem for local quotient stacks (Prop \ref{whitney}). One
constructs appropriate embeddings for global quotients using standard
equivariant techniques and then glues these together to obtain
embeddings for local quotient stacks. The construction of
Pontrjagin-Thom maps is then a matter of adapting the classical
construction.

\subsection{Local quotient stacks}

Here we introduce local quotient stacks, which we view as the natural setting for
the Pontrjagin-Thom construction.

\begin{definition}\label{deflocstack}
  A \emph{local quotient stack} is a topological
  stack $\fX$, such that
\begin{enumerate}
\item there exists a paracompact atlas for $\fX$.
\item there exists a countable cover of open substacks $\fX_i \subset
  \fX$ such that $\fX_i \cong X_i \hq G_i$ for some space $X_i$ and
  some compact Lie group $G_i$.
\item The diagonal morphism $\fX \to \fX \times \fX$ is representable
  and proper.
\end{enumerate}
A differentiable stack is a local quotient stack if the spaces $X_i$
are smooth manifolds with smooth $G_i$-actions.
\end{definition}

\begin{lemma}[\cite{FHT}, Lemma A.14]\label{fht}
  If $\fY$ is a local quotient stack and $f: \fX \to \fY$ is a
  representable morphism of topological stacks then $\fX$ is a local
  quotient stack as well.  In particular, every open substack of a
  local quotient stack is a local quotient stack. The analogous
  statements for differentiable local quotient stacks are also true.
\end{lemma}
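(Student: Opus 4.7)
My plan is to verify each of the three conditions in Definition \ref{deflocstack} for $\fX$, leveraging the representability of $f$ together with the corresponding properties of $\fY$. I would handle the conditions in increasing order of difficulty: first the countable cover by global quotients, then the diagonal, and finally the paracompact atlas.

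The countable open cover by global quotient stacks comes out essentially for free. Let $\{\fY_i\}_{i \in \bN}$ be the given countable cover of $\fY$ with $\fY_i \cong Y_i \hq G_i$ and $G_i$ compact Lie. Set $\fX_i := \fX \times_{\fY} \fY_i$; these are open substacks of $\fX$ and still form a countable cover. Since $Y_i \to \fY_i$ is a principal $G_i$-bundle and $f$ is representable, base change produces a principal $G_i$-bundle $X_i := Y_i \times_{\fY_i} \fX_i \to \fX_i$ whose total space $X_i$ is a genuine topological space, and pulling back the $G_i$-action on $Y_i$ yields the identification $\fX_i \cong X_i \hq G_i$. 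The differentiable case is identical once one observes that representability in the smooth sense gives $X_i$ the structure of a smooth manifold with smooth $G_i$-action.

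Representability and properness of the diagonal follow from base change. Factor
\[
\Delta_{\fX} = \iota \circ \Delta_{f} : \fX \longrightarrow \fX \times_{\fY} \fX \longrightarrow \fX \times \fX,
\]
where $\iota$ is the pullback of $\Delta_{\fY}$ along $f \times f$. Both representability and properness are stable under base change, so $\iota$ inherits them from $\Delta_{\fY}$. The relative diagonal $\Delta_f$ is itself representable and proper (in fact a closed embedding) because $f$ is representable: testing on any space mapping to $\fX \times_{\fY} \fX$ exhibits the fiber as a single point. Composing two representable, proper maps gives the desired property of $\Delta_{\fX}$.

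The main obstacle is the paracompact atlas condition. The atlas $\coprod_i X_i \to \fX$ coming from Step 1 is a natural candidate, and since paracompactness is preserved by countable disjoint unions it suffices to show that each $X_i$ can be arranged to be paracompact. My approach would be to first refine the cover of $\fY$ so that each $Y_i$ is itself paracompact, which is possible by intersecting with a paracompact atlas for $\fY$ and averaging over the compact group $G_i$ to obtain $G_i$-invariant paracompact opens. Then $X_i \to Y_i$ is a $G_i$-equivariant representable morphism of spaces, and using the slice theorem for compact Lie group actions together with the paracompactness of $Y_i$ one assembles a locally-finite $G_i$-invariant partition of unity on $X_i$, so $X_i$ is paracompact. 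Disjointly uniting the resulting $X_i$ produces the required paracompact atlas for $\fX$. This last step is where the real technical work lies; conditions (2) and (3) are essentially formal consequences of base-change stability, and for the paracompactness argument I would defer to the careful treatment in \cite{FHT}.
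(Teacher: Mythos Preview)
Your argument for condition (2) --- pulling back the atlas $Y_i \to \fY_i$ along $f$ to exhibit $\fX_i \cong X_i \hq G_i$ --- is exactly the paper's proof, and this is in fact the \emph{only} condition the paper treats. Your proposal is more thorough: the paper says nothing about the paracompact atlas or the diagonal, presumably viewing them as routine or already covered in \cite{FHT}. Your factorization $\Delta_{\fX} = \iota \circ \Delta_f$ for condition (3) is the standard one and works; your sketch for condition (1) correctly flags paracompactness of $X_i$ as the nontrivial point (paracompactness of $Y_i$ alone does not transfer along an arbitrary map $X_i \to Y_i$), and deferring that to \cite{FHT} is exactly what the paper does implicitly by citing the lemma from there.
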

\begin{proof}
  First suppose that $\fY$ is a global quotient $Y \hq G$. Let $X:=
  \fX \times_{\fY} Y \to \fX$ be the atlas of $\fX$ obtained by
  pulling back the atlas $Y \to \fY$. One easily checks that $X
  \times_{\fX} X \cong G \times X$ and that the two arrows
  $X\times_\fX X = G \times X \rightrightarrows X$ are the projection
  onto $X$ and a group action.  Furthermore, one can check that $f:
  \fX \to \fY$ is represented by a $G$-equivariant map $X \to Y$.  Now
  suppose $\fY$ is a local quotient stack with a covering by global
  quotients $\{\fY_i \cong Y \hq G_i \}$.  The substacks $\fX_i :=
  \fY_i \times_{\fY} \fX$ form an open cover of $\fX$ and by the
  above, $\fX_i \cong X_i \hq G_i$.
\end{proof}

Lemma \ref{fht} indicates that the class of local quotient stack is
large and robust.  Orbifolds are local quotient stacks and so are
global quotient stacks of the form $Y \hq \Gamma$, where $\Gamma$ is a
(possibly noncompact) Lie group which acts properly on $Y$.  A very
general result by Zung \cite{Zung} states that any proper Lie groupoid
represents a local quotient stack.

\begin{lemma}
The coarse moduli space $\fX^{coarse}$ of a differentiable local quotient stack $\fX$
is a paracompact Hausdorff space.
\end{lemma}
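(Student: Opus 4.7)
Fix a paracompact atlas $p: X \to \fX$ from condition (1), and form the topological groupoid $X_1 := X \times_\fX X \rightrightarrows X$ with source and target $s, t$. By construction, $\fX^{coarse}$ is the orbit space $X/R$, where $R = (s,t)(X_1) \subset X \times X$, equipped with the quotient topology. The plan is to read off Hausdorffness from condition (3) and paracompactness from condition (1) combined with condition (2).

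For Hausdorffness I would pull back condition (3). Representability and properness of the diagonal $\fX \to \fX \times \fX$ translate directly into properness of the anchor map $(s,t): X_1 \to X \times X$, and hence into closedness of $R$ inside the Hausdorff space $X \times X$. The source $s$ admits local sections, being a base change of the atlas $p$, so the quotient map $q: X \to \fX^{coarse}$ is open. An open equivalence relation on a Hausdorff space whose graph is closed always yields a Hausdorff quotient, so $\fX^{coarse}$ is Hausdorff.

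For paracompactness I would deduce the conclusion from paracompactness of $X$ by an averaging/partition-of-unity argument, treating $X_1 \rightrightarrows X$ as a proper topological groupoid that is locally the action groupoid of a compact Lie group. Given any open cover $\{V_\alpha\}$ of $\fX^{coarse}$, pull it back to an open cover of $X$ and choose a subordinate partition of unity $\{\phi_\alpha\}$ using paracompactness of $X$. Because condition (2) presents the groupoid locally as $X_i \hq G_i$ with $G_i$ compact, the normalized Haar measures on the $G_i$ can be patched via a continuous cut-off function on $X$ to give a continuous Haar system on $X_1$; averaging each $\phi_\alpha$ against this system produces $R$-invariant continuous functions on $X$ which descend to continuous functions on $\fX^{coarse}$. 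These descended functions still sum to $1$, have locally finite supports contained in the $V_\alpha$, and therefore constitute a partition of unity on $\fX^{coarse}$ subordinate to $\{V_\alpha\}$. A Hausdorff space admitting subordinate partitions of unity for every open cover is paracompact, which finishes the proof.

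The main obstacle, in my view, is constructing the continuous Haar system honestly and checking that averaging preserves support and local finiteness conditions. One must patch the compact-group models from (2) using a cut-off function furnished by paracompactness of $X$, and use properness of the anchor from (3) to guarantee that the integrals converge to continuous functions whose supports intersect only finitely many orbits at a time. Once this technical input is granted, the Hausdorff and paracompactness conclusions fall out essentially formally from the standard correspondence between invariant partitions of unity upstairs and partitions of unity on the orbit space.
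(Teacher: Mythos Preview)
Your proposal is correct, but it is far more explicit than the paper's argument. The paper's proof is a two-line appeal to the literature: one observes that the groupoid $X \times_\fX X \rightrightarrows X$ associated to an atlas is \emph{proper} in the sense of Moerdijk (this is precisely your translation of condition (3) into properness of the anchor map), and then invokes the known fact that the orbit space of a proper Lie groupoid is Hausdorff and paracompact. Your Hausdorff argument via closedness of the graph and openness of the quotient map is exactly the standard argument that underlies the cited fact. Your paracompactness argument---pulling back a cover, choosing a partition of unity on $X$, and averaging against a Haar system to obtain invariant functions that descend---is one of the standard routes to the cited fact for proper Lie groupoids; the technical obstacle you single out (honestly constructing and patching the Haar system, and checking that averaging preserves local finiteness of supports) is real and is the genuine content hidden in the reference. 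So the two approaches are not really different in substance: the paper outsources the work to Moerdijk, while you sketch a self-contained proof of the same result. Your route buys independence from the citation at the cost of carrying out the Haar-system construction you flag.
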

\begin{proof}
  Given an atlas $X \to \fX$, one sees that the associated groupoid
  $X\times_{\fX}X \rightrightarrows X$ is proper in the sense of
  \cite{Moer}.  The coarse moduli space is the orbit space of this
  groupoid and hence it is Hausdorff and paracompact.
\end{proof}

As an application of this lemma, we have existence of locally finite
smooth partitions of unity subordinate to any open cover of $\fY$ as
follows.  Any open cover of $\fY$ gives an open cover of
$\fY^{coarse}$.  On $\fY^{coarse}$, we have partitions of unity, which
can then be pulled back via $\fY \to \fY^{coarse}$.

\subsection{Universal vector bundles on local quotient stacks}

In this section, we introduce ``universal vector bundles'' on stacks.
Freed, Hopkins and Teleman \cite{FHT} showed that any local quotient
stack admits a universal \emph{Hilbert} bundle.  For the purpose of
constructing Pontrjagin-Thom maps we instead need universal vector
bundles with fiber $\bR^\infty = \colim \bR^n$.  Here we show that any
local quotient stack has a universal countably-dimensional vector
bundle (its completion will be a universal Hilbert bundle).

Consider $\bR^\infty$ with the colimit topology; this is a complete,
locally convex topology which is not metrizable.  It is a very fine
topology: \emph{any} linear map $\bR^{\infty} \to V$ to an arbitrary
topological vector space $V$ is continuous.

Consider the group $O(\bR^{\infty})$ of isometries of $\bR^{\infty}$
with respect to the standard inner product. On $O(\bR^{\infty})$ we
define the following topology.  The compact-open topology on the
vector space $\End(\bR^{\infty})$ agrees with the topology of
pointwise convergence.  Embed $O(\bR^{\infty}) \hookrightarrow
\End(\bR^{\infty}) \times
\End(\bR^{\infty})$ via $f \mapsto (f, f^{-1})$ and take the induced
subspace topology on $O(\bR^{\infty})$. Finally, replace this topology by its
compactly generated replacement \cite{steen}.

\begin{proposition}\label{functionalanalysis} Let $O(\bR^{\infty})$ be
  endowed with the topology described above.
\begin{enumerate}
\item $O(\bR^{\infty})$ is a topological group.
\item By extension, $O(\bR^{\infty})$ acts by isometries on $\ell^2$
  and this action is continuous\footnote{This action cannot be
    extended continuously to an action of the group of all (say
    bounded) isomorphisms of $\bR^{\infty}$.}.
\item Let $V \subset \bR^{\infty}$ be a finite-dimensional subspace.
  Then the standard inclusion $O(V) \to O(\bR^{\infty})$ is
  continuous.
\item Let $G$ be a compact Lie group and let $V$ be a
  countably-dimensional orthogonal $G$-representation (then $V$ is
  isometric to $\bR^{\infty}$). The action homomorphism $G \to O(V)$
  is continuous.
\end{enumerate}
\end{proposition}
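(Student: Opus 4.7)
The plan is to address the four parts in sequence, exploiting the underlying fact that every compact subset of $\bR^\infty$ (with the colimit topology) is contained in some finite-dimensional subspace $\bR^n$. As noted in the discussion preceding the statement, the compact-open topology on $\End(\bR^\infty)$ therefore coincides with the topology of pointwise convergence. On $O(\bR^\infty)$ this further coincides with uniform convergence on compacta, because every isometry is $1$-Lipschitz. This reconciliation of topologies is what makes elementary $\varepsilon$-estimates available.

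For part (1), continuity of inversion is immediate from the definition: the embedding $\iota : O(\bR^\infty) \hookrightarrow \End(\bR^\infty) \times \End(\bR^\infty)$, $f \mapsto (f,f^{-1})$, intertwines $f \mapsto f^{-1}$ with the coordinate flip. For multiplication, given nets $f_\alpha \to f$ and $g_\alpha \to g$ pointwise, the isometry property of each $f_\alpha$ yields
\[
|f_\alpha(g_\alpha(x)) - f(g(x))|
\leq |g_\alpha(x) - g(x)| + |f_\alpha(g(x)) - f(g(x))|,
\]
so $f_\alpha g_\alpha \to fg$ pointwise; the same computation applied to inverses gives convergence of $g_\alpha^{-1} f_\alpha^{-1}$. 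Hence $\iota \circ \mu$ is continuous, and since the source $O(\bR^\infty) \times O(\bR^\infty)$ is already compactly generated, continuity of $\mu$ passes to the compactly generated replacement on the target.

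Part (2) proceeds by density. Each $f \in O(\bR^\infty)$ is $1$-Lipschitz and so extends uniquely to an isometry of the completion $\ell^2$. For joint continuity of the extended action, given $v \in \ell^2$ and $\varepsilon > 0$, choose $w \in \bR^\infty$ with $\|v - w\| < \varepsilon$ and estimate
\[
\|f_\alpha v_\alpha - f v\|
\leq \|v_\alpha - v\| + 2\|v - w\| + \|f_\alpha w - f w\|,
\]
where the final term tends to zero because $w \in \bR^\infty$ and convergence in $O(\bR^\infty)$ is pointwise. Part (3) is straightforward: an element of $O(V)$ extends by the identity on $V^\perp$, and convergence in the finite-dimensional Lie group $O(V)$ is uniform on $V$ and trivial on $V^\perp$, giving convergence pointwise on all of $\bR^\infty$ for both the element and its inverse. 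For part (4), the hypothesis that $V$ is an orthogonal $G$-representation means that the action map $G \times V \to V$ is jointly continuous and preserves the inner product; hence for each $v \in V$ the orbit maps $g \mapsto gv$ and $g \mapsto g^{-1}v$ are continuous, so $G \to \End(\bR^\infty) \times \End(\bR^\infty)$ is continuous into pointwise convergence, and since $G$ is compactly generated the conclusion follows.

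The main technical obstacle is in part (1): one must ensure that passing to the compactly generated replacement of the subspace topology on $\iota(O(\bR^\infty))$ does not obstruct continuity of multiplication. The crux is that $O(\bR^\infty) \times O(\bR^\infty)$, whether formed in $\textsc{Top}$ or in the compactly generated category, has enough compact test-objects to detect continuity; this in turn rests on the simple structure of compact sets in $\bR^\infty$ noted at the outset. Once this is granted, the remaining parts reduce to routine pointwise-convergence arguments on finite-dimensional subspaces.
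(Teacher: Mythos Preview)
Your argument is correct and follows the same path as the paper, which disposes of (1) by citing Steenrod's Theorem~5.9, of (2) by citing Steenrod's Theorem~3.2(i) and calling the remaining verification ``straightforward'', and declares (3) and (4) immediate. You have simply unpacked these citations: the explicit $\varepsilon$-estimates using the $1$-Lipschitz property of isometries are precisely the straightforward check the paper omits, and your density argument for (2) and orbit-map argument for (4) are the natural ones.

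The one place deserving a further word is the passage to the compactly generated refinement in (1), which you rightly single out as the crux. Your phrase ``the source is already compactly generated'' is a little quick, since the ordinary product of two CG spaces need not be CG. The cleanest way to close the gap is functorial: your net argument shows that multiplication $O_{\mathrm{sub}} \times O_{\mathrm{sub}} \to O_{\mathrm{sub}}$ is continuous for the subspace topology; applying the $k$-ification functor (which preserves continuous maps and satisfies $k(X\times Y)\cong kX\times_k kY$ in Steenrod's framework) then yields continuity of $kO \times_k kO \to kO$. This is exactly what Steenrod's Theorem~5.9 packages, so your hands-on argument and the paper's citation amount to the same thing.
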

\begin{proof}
  Claim (1) follows immediately from Theorem 5.9 in \cite{steen}.
  For (2), by Theorem 3.2.(i) of \cite{steen}, it suffices to show
  that $O(\bR^{\infty}) \times \ell^2 \to \ell^2$ is continuous when
  $O(\bR^{\infty})$ has the compact-open topology, which is
  straightforward. Claim (3) is immediate, as is (4) because
  $G$ is compact and any finite-dimensional representation is
  continuous by definition.
\end{proof}

By \emph{countably-dimensional vector bundle} we shall always mean a
fiber bundle with structure group $O(\bR^\infty)$ (with the above
topology) and fiber $\bR^\infty$.

\begin{definition}
  Let $\fX$ be a topological stack. A \emph{universal vector
    bundle} on $\fX$ is a countably-dimensional vector bundle $\fH \to
  \fX$ such that any other (finite or countably dimensional) vector
  bundle $\fE \to \fX$ admits a complemented isometric embedding $\fE \hookrightarrow
  \fH$.
\end{definition}

A universal vector bundle $\fH$ has two ``absorption properties'': the
tensor product $\fH \otimes \bR^{\infty}$ is isometrically isomorphic
to $\fH$ and so is the sum $\fH \oplus \fE$ with any
countably-dimensional vector bundle $\fE$.

\begin{proposition}\label{univvectbund}
  If $\fX$ is a local quotient stack, then there exists a universal
  vector bundle $\fH \to \fX$. Furthermore, it is unique up to
  isometry. For any other vector bundle $\fE \to \fX$, the space of
  isometric embeddings $\Mon(\fE; \fH)$ is weakly contractible. The
  pullback along any representable morphism of local quotient stacks
  is also a universal vector bundle.
\end{proposition}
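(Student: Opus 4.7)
My plan is in four stages: (a) construct a universal bundle for a global quotient, (b) glue over a cover to get the general case, (c) establish uniqueness and contractibility of $\Mon(\fE;\fH)$ via an absorption/rotation argument, and (d) deduce the pullback statement from uniqueness.

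For (a): when $\fX = X \hq G$ with $G$ a compact Lie group, a vector bundle on $\fX$ is the same as a $G$-equivariant vector bundle on $X$. Let $\cU_G$ be the \emph{complete $G$-universe} --- the orthogonal $G$-representation $\bigoplus_{[V]\in\widehat{G}} V^{\oplus\infty}$, topologized as the colimit of its finite-dimensional sub-representations. By Proposition \ref{functionalanalysis}(4) the homomorphism $G \to O(\cU_G)$ is continuous, so $X \times \cU_G$ is a $G$-equivariant countably-dimensional vector bundle on $X$ and descends to a bundle $\fH_{X,G}$ on $\fX$. For universality, the key input is the equivariant Mostow--Palais embedding theorem: every finite-dimensional $G$-equivariant vector bundle $E \to X$ admits a complemented equivariant embedding into $X \times V$ for some finite-dimensional orthogonal $G$-representation $V$; since $V \subset \cU_G$, we obtain the desired embedding $E \hookrightarrow \fH_{X,G}$. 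A countably-dimensional bundle is handled by exhausting by finite-dimensional subbundles and reassembling via the absorption isomorphism $\cU_G \cong \bigoplus_{\bN} \cU_G$.

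For (b): by the earlier lemma, $\fX^{coarse}$ is paracompact Hausdorff, so we may choose a countable locally finite refinement $\{\fU_i\}$ of the given cover by global quotient substacks, together with a subordinate partition of unity $\{\rho_i\}$ pulled back from $\fX^{coarse}$. Let $\fH_i$ be the universal bundle on $\fU_i$ produced in (a). Form
\begin{equation*}
\fH := \bigoplus_{i} \widetilde{\fH}_i,
\end{equation*}
where $\widetilde{\fH}_i$ is obtained by clutching $\fH_i$ to the zero bundle on $\fX \setminus \overline{\supp\rho_i}$ via the scaling $v \mapsto \rho_i v$, which tends continuously to $0$ along $\partial\supp\rho_i$. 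Any vector bundle $\fE$ on $\fX$ embeds into $\fH$: over each $\fU_i$ one embeds $\fE|_{\fU_i}$ into $\fH_i$ by (a) and assembles the scaled embeddings $\rho_i\cdot(\iota_i)$ into a global complemented embedding into $\bigoplus_i \widetilde{\fH}_i$.

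For (c): the absorption property $\fH \oplus \fH \cong \fH$ is immediate from the direct-sum construction. Given two complemented embeddings $\iota_0,\iota_1 \in \Mon(\fE;\fH)$, write $\fH \cong \fH \oplus \fH$ and use the rotation family
\begin{equation*}
R_t = \begin{pmatrix} \cos(t\pi/2) & -\sin(t\pi/2) \\ \sin(t\pi/2) & \cos(t\pi/2) \end{pmatrix}, \qquad t\in[0,1],
\end{equation*}
to join $(\iota_0,0)$ to $(0,\iota_1)$, and then a second rotation to join $(0,\iota_1)$ to $(\iota_1,0)$; this gives a canonical path $\iota_0 \leadsto \iota_1$. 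The same construction applied fiberwise to families parametrized by $S^k$ shows that $\Mon(\fE;\fH)$ is weakly contractible. Applied to $\fE = \fH'$ for a second universal bundle $\fH'$ (and symmetrically), this gives uniqueness up to isometry.

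For (d): let $f:\fX\to\fY$ be representable with $\fH$ universal on $\fY$. By Lemma \ref{fht}, $\fX$ is itself a local quotient stack and by (b) carries a universal bundle $\fH_\fX$; by uniqueness from (c) it suffices to show $f^*\fH$ is universal. This one checks locally: over $\fX_i \cong X_i \hq G_i$, the restriction $f^*\fH|_{\fX_i}$ is countably-dimensional and absorbs the pullback of every finite-dimensional bundle on $\fY|_{f(\fX_i)}$; a refinement of the Mostow--Palais argument in (a) (applied to equivariant slices for the $G_i$-action) shows that $f^*\fH|_{\fX_i}$ contains $X_i\times\cU_{G_i}$ as a complemented summand, hence is universal on $\fX_i$. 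Patching with the partition of unity $\{\rho_i\}$ gives universality on $\fX$.

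The main obstacle is step (b): verifying that the glued direct sum is genuinely a countably-dimensional vector bundle with continuous transition functions in the rather delicate compactly generated topology on $O(\bR^\infty)$ from Proposition \ref{functionalanalysis}, and that partition-of-unity-weighted local embeddings patch together into a well-defined global complemented embedding. The other nontrivial point is the equivariant step in (d), since a general bundle on $\fX$ need not be pulled back from $\fY$, so one must argue universality of $f^*\fH$ through the local Mostow--Palais input rather than a direct embedding.
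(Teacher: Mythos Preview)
Your overall architecture (local construction, glue, swindle for uniqueness, pullback) matches the paper's, which proves Lemma~\ref{local} for global quotients and then defers to the arguments in \cite{FHT} for the remaining steps. Two points deserve comment.

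First, in step~(a) you invoke an equivariant Mostow--Palais theorem to embed a finite-dimensional $G$-bundle $E \to X$ into $X \times V$ with $V$ a \emph{finite-dimensional} $G$-representation. This fails for general paracompact $X$: already for $G$ trivial, a vector bundle over a non-compact paracompact base need not embed in a trivial bundle of finite rank. The paper avoids this by going directly to $\bR^\infty$: choose a countable trivializing cover and use the partition-of-unity formula $\phi(v) = (\pi(v),\sqrt{\lambda_1}\phi_1(v),\sqrt{\lambda_2}\phi_2(v),\ldots)$ to embed $E$ into $X \times \bR^\infty$ non-equivariantly, then apply Frobenius reciprocity $\Hom(W,U) \cong \Hom_G(W,U\otimes L^2(G)_{fin})$ to promote this to an equivariant embedding into $X \times (\bR^\infty \otimes L^2(G)_{fin})$. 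This is both more elementary and more general than the Mostow--Palais route.

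Second, and more seriously, your step~(b) does not work as written. You construct $\widetilde{\fH}_i$ by ``clutching $\fH_i$ to the zero bundle on $\fX \setminus \overline{\supp\rho_i}$ via $v \mapsto \rho_i v$''. On the overlap $\fU_i \setminus \supp\rho_i$ this map is identically zero, so it is not a bundle isomorphism and the clutching construction does not produce a locally trivial fiber bundle; the fibers of your $\widetilde{\fH}_i$ would jump from countably-dimensional to zero. The partition of unity does let you patch \emph{sections} or \emph{embeddings into} a pre-existing global bundle, which is how your embedding argument $\sum_i \rho_i \iota_i$ would work, but it does not let you manufacture the global target bundle $\fH$ by summing local pieces. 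The actual gluing of the local universal bundles $\fH_i$ into a global $\fH$ requires the uniqueness/contractibility statement on overlaps (so steps (b) and (c) are intertwined), and this is the content the paper imports from \cite{FHT}. Your rotation argument in (c) is the right idea, but you should use it to build transition isomorphisms on overlaps satisfying a cocycle condition (or equivalently run an inductive Mayer--Vietoris gluing), rather than attempting a direct-sum-with-cutoff construction.
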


\begin{remark}
  The local quotient hypothesis is necessary. An example of a
  differentiable stack which is not a local quotient and does not
  admit a universal vector bundle is $\ast \hq \bR$ because the set
  of equivalence classes of orthogonal $\bR$-representations is
  uncountable.
\end{remark}

If countably-dimensional vector bundles are replaced by (separable)
Hilbert space bundles, then the analogue of Proposition
\ref{univvectbund} is proven in \cite{FHT}, p. 57 ff.  Due to Lemma
\ref{functionalanalysis}, (2), any countably-dimensional vector bundle
can be completed to a Hilbert space bundle. When one completes the
universal countably-dimensional bundle, one obtains the universal
Hilbert bundle. Hence Proposition \ref{univvectbund} can be
interpreted as the statement that the universal Hilbert-bundle on a
local quotient stack admits a ``countable substructure''.

To prove Proposition \ref{univvectbund}, the arguments of loc. cit. carry
over to our situation without essential change, with one exception:
the construction of a universal vector bundle on $X \hq G$ needs to be adjusted. 

Let $G$ be a compact Lie group, let $L^2 (G)$ be the regular
representation of $G$ and $L^2(G)_{fin} \subset L^2 (G)$ be the space
of $G$-finite vectors. It is an $L^2$-dense, countably-dimensional
subspace (see \cite{BroetDie}, Theorem 5.7). Moreover, it contains any
irreducible $G$-representation with finite multiplicity. Thus
$L^2(G)_{fin} \otimes \bR^{\infty}$ contains any countably-dimensional
$G$-representation.

\begin{lemma}\label{local}
  If $X$ is a paracompact space and $G$ is a compact Lie group acting
  continuously on $X$, then the vector bundle $X \times_{G}
  (L^2(G)_{fin} \otimes \bR^{\infty}) \to X \hq G$ is a universal
  vector bundle.
\end{lemma}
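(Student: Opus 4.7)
The plan is to translate the statement into an equivariant one: by descent, vector bundles on $X\hq G$ correspond to $G$-equivariant vector bundles on $X$, and the bundle $X\times_G(L^2(G)_{fin}\otimes\bR^\infty)$ corresponds to the equivariantly trivial bundle with fiber $L^2(G)_{fin}\otimes\bR^\infty$ (carrying the diagonal $G$-action). So universality amounts to showing that every $G$-equivariant countably-dimensional orthogonal vector bundle $\fE\to X$ admits a $G$-equivariant complemented isometric embedding into $X\times(L^2(G)_{fin}\otimes\bR^\infty)$.

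First, I would work locally. Given $x\in X$ with stabilizer $G_x$, the slice theorem for compact Lie group actions provides a $G$-invariant open neighborhood $U$ of the orbit $Gx$ and a $G$-equivariant homeomorphism $U\cong G\times_{G_x}S$ for a $G_x$-slice $S$. Combined with local triviality of $\fE$ and the standard averaging trick over $G_x$, the restriction $\fE|_U$ becomes $G$-equivariantly isometric to $U\times_{(??)}V$ for a countably-dimensional orthogonal $G_x$-representation $V$, and then to a bundle whose fiber we may view as a countably-dimensional orthogonal $G$-representation $W$. Using paracompactness of $X$ (equivalently, of $X/G$, which is paracompact Hausdorff since $G$ acts properly), I would refine these neighborhoods to a $G$-invariant locally finite open cover $\{U_i\}$ with equivariant trivializations $\fE|_{U_i}\cong U_i\times W_i$.

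Next, I would embed each fiber into the universal model. By the Peter--Weyl theorem, $L^2(G)_{fin}\cong\bigoplus_\pi V_\pi\otimes V_\pi^{*}$ with the sum ranging over all isomorphism classes of irreducible orthogonal $G$-representations, each occurring with positive multiplicity. Tensoring with $\bR^\infty$ then yields a $G$-representation containing every irreducible with countably infinite multiplicity, hence containing every countably-dimensional orthogonal $G$-representation as a complemented summand. In particular each $W_i$ admits a $G$-equivariant isometric embedding $W_i\hookrightarrow L^2(G)_{fin}\otimes\bR^\infty$, giving $G$-equivariant maps $\phi_i:\fE|_{U_i}\to U_i\times(L^2(G)_{fin}\otimes\bR^\infty)$. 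I would then take a $G$-invariant partition of unity $\{\rho_i\}$ subordinate to $\{U_i\}$ (pulled back from a partition of unity on $X/G$) and assemble
\begin{equation*}
\Phi:\fE\longrightarrow X\times\bigoplus_{i\in\bN}(L^2(G)_{fin}\otimes\bR^\infty),\qquad e\longmapsto\Bigl(\pi(e),\bigl(\sqrt{\rho_i(\pi(e))}\,\phi_i(e)\bigr)_i\Bigr).
\end{equation*}
Since $\sum\rho_i=1$ and each $\phi_i$ is isometric on fibers, $\Phi$ is a fiberwise isometric embedding, and it is $G$-equivariant because the $\rho_i$ are $G$-invariant and the $\phi_i$ are $G$-equivariant. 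Using the absorption isomorphism $\bigoplus_{i\in\bN}(L^2(G)_{fin}\otimes\bR^\infty)\cong L^2(G)_{fin}\otimes\bR^\infty$ of orthogonal $G$-representations (and continuity in the colimit topology, which is automatic for linear maps out of $\bR^\infty$ by Proposition \ref{functionalanalysis}), I get the desired embedding into the universal bundle.

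For complementedness I would take the fiberwise orthogonal complement of $\Phi(\fE)$; the only thing to verify is local triviality of the complement, which follows because after a further refinement of the cover one can apply Gram--Schmidt on each $U_i$ (using the local trivializations of $\fE$ already chosen) to produce explicit equivariant isomorphisms of the complement with a trivial bundle with fiber the orthogonal complement of $W_i$ inside $L^2(G)_{fin}\otimes\bR^\infty$, which is itself isometric to $L^2(G)_{fin}\otimes\bR^\infty$ by absorption. The main obstacle will be keeping track of $G$-equivariance together with the rather delicate topology on $O(\bR^\infty)$ carried into the structure group: one must verify that all the maps constructed (in particular the clutching functions arising from the partition-of-unity gluing and the Gram--Schmidt process) are continuous in the compactly generated pointwise-convergence topology of Proposition \ref{functionalanalysis}, and here the key leverage is part (4) of that proposition together with the fact that any linear map out of $\bR^\infty$ with the colimit topology is continuous.
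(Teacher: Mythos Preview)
Your approach is workable in outline but takes a longer route than the paper's, and there is a genuine imprecision at the slice-theorem step. You write that over a tube $U\cong G\times_{G_x}S$ the bundle becomes $G$-equivariantly isometric to $U\times W_i$ for some $G$-representation $W_i$. This is not true in general: after trivializing over the slice you get $G\times_{G_x}(S\times V)$ for a $G_x$-representation $V$, and this is $G$-equivariantly trivial with fiber $W$ only when $V$ happens to be the restriction of some $G$-representation $W$, which need not hold. (Your own ``$(??)$'' flags that something is off here.) What you actually need for the partition-of-unity patching is only a $G$-equivariant isometric \emph{embedding} $\fE|_U\hookrightarrow U\times(L^2(G)_{fin}\otimes\bR^\infty)$, not a trivialization; this you can obtain by embedding $V$ $G_x$-equivariantly into the restriction of $L^2(G)_{fin}\otimes\bR^\infty$ and extending over the tube. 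With that fix the argument goes through.

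The paper avoids all of this with a single stroke. It first forgets the $G$-action entirely and constructs a \emph{nonequivariant} complemented isometric embedding $j:\fE\hookrightarrow X\times\bR^\infty$ using ordinary local trivializations and an ordinary partition of unity---completely elementary. It then invokes Frobenius reciprocity in the form $\Hom(W,U)\cong\Hom_G(W,U\otimes L^2(G)_{fin})$ to promote $j$ to a $G$-equivariant embedding $j':\fE\hookrightarrow X\times(\bR^\infty\otimes L^2(G)_{fin})$. This sidesteps the slice theorem, equivariant local trivializations, and $G$-invariant partitions of unity altogether. Your approach is more hands-on and keeps the equivariance visible throughout; the paper's is shorter and isolates the equivariant content in one algebraic identity.
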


\begin{proof}
  This is only a slight modification of well-known results. First, let
  $G$ be the trivial group and let $\pi:V \to X$ be a vector bundle.
  Choose a countable cover $\{U_i\}_{i \in \bN}$ of $X$,
  trivializations $(\pi, \phi_i): V|_{U_i} \to U_i \times
  \bR^{\infty}$, and a locally finite partition of unity $(\lambda_i)$
  subordinate to the chosen cover.  Then the map $\phi: V \to X \times
  \bR^{\infty \times \infty}$ given by
\[
\phi(v) := \left( \pi(v), \sqrt{\lambda_1(\pi(v))} \phi_1 (v) ,
\sqrt{\lambda_2(\pi(v))} \phi_2 (v), \sqrt{\lambda_3(\pi(v))} \phi_3
(v), \ldots \right)
\]
is clearly an isometric complemented embedding.

For nontrivial $G$, we argue as follows.  Let $V \to X$ be a
$G$-equivariant vector bundle and let $j:V \hookrightarrow X \times
\bR^{\infty}$ be an embedding as constructed above.  By Frobenius
reciprocity \cite{BroetDie}, p. 144, there is a continuous isomorphism
$\Hom(W,U) \cong \Hom_G(W;U \otimes L^2 (G)_{fin})$ for all
countably-dimensional $G$-modules $W$ and all countably-dimensional
vector spaces $U$.  Use this to extend $j$ to a $G$-equivariant
embedding $j \dash : V \hookrightarrow X \times \bR^{\infty} \otimes
L^2(G)_{fin}$.
\end{proof}

Proposition \ref{univvectbund} now follows from Lemma \ref{local} by
the same arguments as used in \cite{FHT}.

\subsection{The Whitney embedding theorem for local quotient stacks}

\begin{proposition}\label{whitney}
  Let $f: \fX \to \fY$ be a proper representable morphism between
  differentiable local quotient stacks. Let $\pi:\fH \to \fY$ be a universal vector
  bundle. Then there exists a fat embedding $\fX \hookrightarrow
  \fH$ over $\fY$. More precisely, there exists a
  countably-dimensional vector bundle $q:\fE \to \fX$ with zero
  section $s$ and an open embedding $j:\fE \hookrightarrow
  \fH$ such that $\pi \circ j \circ s = f $. Moreover, the space
  of such embeddings is contractible.
\end{proposition}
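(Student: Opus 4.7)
The plan is to reduce to the global quotient case via the local quotient structure, apply the classical equivariant Whitney embedding theorem, and then patch the resulting local constructions using a partition of unity together with the absorption properties of the universal bundle.

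First, by Lemma \ref{fht}, choose a countable cover of $\fY$ by open substacks $\fY_i \cong Y_i \hq G_i$ with $G_i$ compact Lie groups; the pullbacks $\fX_i := \fX \times_{\fY} \fY_i$ form a cover of $\fX$ by global quotients $X_i \hq G_i$, and $f$ is represented on each piece by a proper $G_i$-equivariant smooth map $f_i : X_i \to Y_i$. The classical equivariant Whitney embedding theorem supplies, for each $i$, a finite-dimensional orthogonal $G_i$-representation $V_i$ and a proper $G_i$-equivariant embedding $X_i \hookrightarrow Y_i \times V_i$ over $Y_i$; the equivariant tubular neighborhood theorem then extends this to a $G_i$-equivariant open embedding of the normal bundle $\nu_i \hookrightarrow Y_i \times V_i$. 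Descending to quotient stacks and composing with a complemented isometric embedding $Y_i \times_{G_i} V_i \hookrightarrow \fH|_{\fY_i}$ (guaranteed by Proposition \ref{univvectbund}) produces a local open embedding $j_i : \fE_i \hookrightarrow \fH|_{\fY_i}$, where $\fE_i \to \fX_i$ denotes the quotient $\nu_i \hq G_i$.

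Next I patch these into a global open embedding. Since $\fY^{coarse}$ is paracompact Hausdorff, the cover $\{\fY_i\}$ admits a subordinate smooth partition of unity $\{\lambda_i\}$; pull it back to $\fY$. Use the absorption isometry $\fH \cong \bigoplus_{i \in \bN} \fH$ to decompose the target as $\fH = \bigoplus_i \fH^{(i)}$. A suitable bundle $\fE \to \fX$ is assembled so that each $\fE_i$ sits inside $\fE|_{\fX_i}$ as a complemented summand (for instance, take $\fE$ to be a universal bundle on $\fX$, which by Proposition \ref{univvectbund} accommodates each $\fE_i$ isometrically and complementedly). Define $j : \fE \to \fH$ by specifying its $i^{\mathrm{th}}$ component to be $\sqrt{\lambda_i(f(\cdot))} \cdot j_i$ on the $\fE_i$-summand and zero on the complement; this is continuous globally because $\sqrt{\lambda_i}$ vanishes outside $\fX_i$. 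The zero-section condition $\pi \circ j \circ s = f$ holds because $\sum_i \lambda_i = 1$. By local finiteness of $\{\lambda_i\}$, checking that $j$ is an open embedding reduces to a local question involving only finitely many summands; at each point at least one $\lambda_i$ is positive, and the corresponding local $j_i$ is already an open embedding into $\fH^{(i)}$, so stacking them yields fiberwise injectivity with open image. Uniqueness up to a contractible space of choices follows from Proposition \ref{univvectbund} (contractibility of the space of isometric complemented embeddings into a universal bundle) combined with the convexity of the remaining choices (partitions of unity and tubular neighborhood splittings).

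The main obstacle is precisely this patching step: in contrast to sections, open embeddings are not stable under convex combinations, so the construction hinges on splitting $\fH$ into infinitely many universal summands -- using the absorption property -- so that distinct local embeddings land in orthogonal summands and never collide, and on local finiteness of the cover, which reduces the openness of $j$ to a finite-dimensional problem handled by the inverse function theorem.
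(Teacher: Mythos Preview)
Your reduction to the global quotient case is fine, though you should note (as the paper does) that Mostow's equivariant embedding theorem requires the action to have finite orbit type; this is why the paper first shrinks each chart to a relatively compact substack $\fW_i \subset \fU_i$ before invoking it.

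The real problem is the patching step.  Your map $j:\fE \to \bigoplus_i \fH^{(i)}$ is not fiberwise injective, so it cannot be an open embedding.  Over a point $x$ where only the indices $i_1,\ldots,i_k$ are active, the $i_r^{\text{th}}$ component of $j$ factors through the projection $\fE_x \twoheadrightarrow (\fE_{i_r})_x$ and hence kills the complement of $(\fE_{i_r})_x$.  Since each $\fE_{i_r}$ is a finite-rank normal bundle while your $\fE$ is the universal (countably infinite-dimensional) bundle on $\fX$, each such complement has finite codimension in $\fE_x$; the intersection of finitely many of them is therefore still of finite codimension, hence nonzero, and lies in $\ker(j|_{\fE_x})$.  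The orthogonal splitting of $\fH$ prevents \emph{different} local embeddings from colliding with each other, but it does nothing to prevent each component from collapsing most of the fiber of $\fE$.  Your final paragraph correctly identifies patching as the crux, but misdiagnoses the failure mode.

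The paper's argument avoids this by never attempting a linear combination of fat embeddings.  Instead it exploits the contractibility of the space $\sF_S$ of fat embeddings over each finite intersection $\fW_S$ (established via an Eilenberg swindle in the global quotient case) to build, inductively on $|S|$, a coherent family of maps $h_S:\Delta_S \to \sF_S$ compatible with restriction along faces.  The partition of unity is then used only to produce barycentric coordinates in the simplex $\Delta_{S(x)}$; the resulting $j(x)$ is the value at $f(x)$ of the \emph{single} fat embedding $h_{S(x)}\bigl(\sum_i \lambda_i(f(x))\{i\}\bigr) \in \sF_{S(x)}$.  Thus one is always evaluating an honest fat embedding rather than averaging several, and the injectivity and openness are automatic.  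The contractibility of the global space of fat embeddings follows from the same simplicial scheme.
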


\begin{proof}
  First assume that $\fY = Y \hq G$, where $G$ is a compact Lie group
  acting on $Y$ with finite orbit type, i.e. the number of conjugacy
  classes of isotropy subgroups is finite. Then $\fX \cong X \hq G$
  and $f$ is represented by a $G$-equivariant map $X \to Y$ (compare
  proof of Lemma \ref{fht}).  Because $f$ is proper, the action on $X$
  also has finite orbit type.  Mostow showed (\cite{most}, p. 444 f)
  that there exists a finite-dimensional $G$-representation $V$ and a
  $G$-equivariant embedding $X \hookrightarrow Y \times V$.  The vector bundle $Y
  \times_{G} V$ admits an isometric embedding into the universal
  bundle $Y \times_{G} L^2 (G)_{fin} \otimes \bR^{\infty}$.  There
  exist $G$-equivariant tubular neighborhoods; the choice of one gives
  a fat embedding. The space of all $G$-equivariant tubular
  neighborhoods is contractible.  A variant of the Eilenberg swindle
  (compare the proof Lemma A.35 in \cite{FHT}) shows that the space of
  all fat embeddings $\fX \to \fH$ over $\fY$ is contractible.

  If $\fY$ is a local quotient stack then we can glue these locally
  defined fat embeddings by the following procedure. Choose open
  substacks $W_i \hq G_i = \fW_i \subset \fU_i = U_i \hq G_i$, $i \in
  \bN$ of $\fY$ such that $W_i \subset U_i$ is relatively compact and
  such that the collection of all $\fW_i$ covers $\fY$.  Then the
  $G_i$-action on $W_i$ is of finite orbit type and therefore the
  space of fat embeddings $\fX \times_{\fY} \fW_i \to \fH|_{\fW_i}$ is
  contractible (in particular, nonempty).

  For any finite nonempty $S \subset \bN$ let $\fW_S$ be the
  intersection (alias fibered product over $\fX$) of all $\fW_i$, $i
  \in S$.  Being an open substack of some $\fW_i$, $\fW_S$ is also a
  global quotient stack.  We have seen that the space $\sF_S$ of all
  fat embeddings $\fX \times_{\fY} \fW_S \hookrightarrow \fH|_{\fW_S}$
  over $\fW_S$ is contractible for all finite nonempty subsets $S
  \subset \bN$.  Let $\Delta_S \subset \bR^S$ denote the $n$-simplex
  spanned by $S$.  Because the spaces $\sF_S$ are all contractible, by
  induction on $|S|$ it is possible to choose maps $h_S: \Delta_S \to
  \sF_S$ satisfying the compatibility conditions that whenever $T
  \subset S$ then $h_S|_{\Delta_T}=r_{T,S} \circ h_T$, where $r_{T,S}:
  \sF_T \to \sF_S$ is the obvious restriction map.

  Now let $\{ \lambda_i \}$ $(i \in \bN)$ be a locally finite
  partition of unity subordinate to the covering $\{ \fW_i \}$.  For
  each $x \in \fX$ let 
  \[
  S(x) := \{i \in \bN \:\: | \:\: f(x) \in \supp (\lambda_i)\}.
  \]
  For $x \in \fX$, we put
  \[
  j(x) := h_{S(x)}\left(\sum_{i\in S(x)} \lambda_{i}(f(x)) \{i\}
  \right)(f(x)) \in \fH.
  \]
  Using that $\{\lambda_i\}_{i \in \bN}$ is locally finite and the
  compatibility of the maps $\{h_S\}$, one observes that $j$ is continuous
  and a fat embedding. The contractibility follows from a similar
  argument.
\end{proof}

\subsection{Constructing the Pontrjagin-Thom map}

Let $f: \fX \to \fY$ be a proper representable morphism of local
quotient stacks, let $\fH \to \fY$ be a universal vector bundle, $\fE
\to \fX$ a vector bundle and $j: \fE \to \fH$ be a fat embedding over
$\fY$, as provided by Proposition \ref{whitney}.  Let $Y \to \fY$ be
an atlas and $X \to \fX$ the induced atlas for $\fX$.  Similarly, we
have atlases $E \to \fE$ and $H \to \fH$. When we use these atlases to
present the homotopy types, we get a diagram
\[
\xymatrix{
\Ho(\fE) \ar[d] \ar[r]^{\Ho(j)} & \Ho(\fH) \ar[d] \\
\Ho(\fX) \ar@/^/[u]^{s} \ar[r]^{\Ho(f)} & \Ho(\fY),
}
\]
where the vertical downward maps are countably-dimensional vector
bundles, the left vertical upward map is the zero section, $\Ho(j)$ is
an open embedding and $\Ho(f)$ is a proper map.  This square commutes
(for either choice of the left vertical arrow).

The bundle $\Ho(\fH)$ is the pullback of $\fH$ along the map
$\Ho(\fY)$, therefore it is a universal vector bundle on $\Ho(\fY)$;
hence $\Ho(\fH) \cong \Ho(\fY) \times \bR^{\infty}$ (this is of course
not true before taking homotopy types).  The proofs of Propositions
\ref{whitney} and \ref{univvectbund} show that any $y \in \Ho(\fY)$
has a neighborhood $U$ such that $\Ho(j)\circ s$ embeds
$\Ho(f)^{-1}(U)$ into a finite-dimensional subbundle of $\Ho(\fY)
\times \bR^{\infty}$. After passing to a smaller neighborhood and
moving by an isotopy of embeddings, we can assume that
\begin{equation}\label{u}
  \Ho(j) \circ s :\Ho(f)^{-1}(U) \hookrightarrow  U \times \bR^{n_0} \subset \Ho(\fY) \times \bR^{\infty}
\end{equation}
for some $n_0$ large enough.  If $n \geq n_0$, then each space
$V_{U,n}:=\Ho(j)^{-1}(U \times \bR^n)$ is the total space of a vector
bundle on $\Ho(f)^{-1}(U)$.  The rank of $V_{U,n}$ is equal to $d+n$,
where $d$ is the codimension of $f$.  Clearly $V_{U,n+1} \cong V_{U,n}
\oplus \bR$. Also $V_{U,n} = V_{U \dash ,n}$ on the intersection $U
\cap U \dash$. Therefore, the $V_{U,n}$ assemble to a stable vector
bundle $V$ on $\Ho(\fX)$.

\begin{lemma}
  The stable vector bundle $V$ is the stable normal bundle $\nu(f)$
  (pulled back to $\Ho(\fX)$).
\end{lemma}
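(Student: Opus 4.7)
My plan is to verify the identification locally in an atlas and then patch the local identifications. First I pick an atlas $Y\to\fY$ with induced atlas $X:=\fX\times_{\fY}Y\to\fX$, so that $f$ pulls back to a proper smooth map $f_X:X\to Y$ between manifolds, whose standard virtual normal bundle $f_X^*TY-TX$ represents the pullback of $\nu(f)$. The fat embedding $j$, the subspaces $V_{U,n}$, and the bundle $V$ are all natural under pullback along submersions, so it suffices to establish the identification after pullback to $X$.

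Working in a local trivialization $\Ho(\fH)|_U\cong U\times\bR^\infty$, equation (\ref{u}) says that for $n\ge n_0$ the composition $\Ho(j)\circ s$ defines a finite-dimensional embedding $e_n:\Ho(f)^{-1}(U)\hookrightarrow U\times\bR^n$ which covers the map $\Ho(f)$. The classical calculation for such a covering embedding gives
\[
\nu(e_n)=e_n^*T(U\times\bR^n)-T\Ho(f)^{-1}(U)=\nu(f)|_U\oplus\bR^n,
\]
using that $T(U\times\bR^n)=TU\oplus\underline{\bR^n}$ is trivial in the $\bR^n$-direction.

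The crux is identifying $V_{U,n}=\Ho(j)^{-1}(U\times\bR^n)$ with $\nu(e_n)$. Because $\Ho(j)$ is an open embedding whose restriction to the zero section equals $e_n$, the restricted map $\Ho(j)|_{V_{U,n}}:V_{U,n}\hookrightarrow U\times\bR^n$ is an open embedding onto a tubular neighborhood of $e_n(\Ho(f)^{-1}(U))$. By the uniqueness of tubular neighborhoods up to isotopy, this endows $V_{U,n}$ with the structure of a rank-$(d+n)$ vector bundle on $\Ho(f)^{-1}(U)$, canonically isomorphic to $\nu(e_n)=\nu(f)|_U\oplus\bR^n$. Stably this gives $V|_U=\nu(f)|_U$.

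The local isomorphisms patch into a global isomorphism $V\cong\nu(f)$: on an overlap $U\cap U'$ the two trivializations of $\fH$ differ by a fiberwise linear bundle automorphism, and since the $V_{U,n}$ are produced from a single globally-defined fat embedding $j$, the induced identifications with $\nu(f)\oplus\bR^n$ agree with the transition cocycle of $\nu(f)$. The main obstacle is the tubular-neighborhood identification in the third paragraph: one must know that $\Ho(j)$ behaves as a genuine tubular neighborhood of the embedding $\Ho(j)\circ s$ in a fiberwise-linear way, not just as an abstract open embedding near the zero section. This is guaranteed by the construction in Proposition \ref{whitney}, which builds $j$ from $G$-equivariant tubular neighborhoods in each global-quotient chart and then glues them via the simplex-indexed family $h_S$.
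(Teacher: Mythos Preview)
Your approach is essentially the paper's: verify the identification after pulling back along a map from a manifold transverse to $f$. The paper uses an arbitrary relatively compact transverse test map $N' \subset N \to \fY$ and then lifts it to $\Ho(\fY)$; you propose using the atlas $Y \to \fY$, which is a submersion and hence serves the same purpose.

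However, your execution conflates two levels. Your first paragraph correctly announces the pullback to the atlas $X$, but paragraphs 2--4 then work with $\Ho(f)^{-1}(U)\subset\Ho(\fX)$ and $U\subset\Ho(\fY)$, which are open subsets of CW complexes, not manifolds. The expressions $T\Ho(f)^{-1}(U)$ and $\nu(e_n)=e_n^*T(U\times\bR^n)-T\Ho(f)^{-1}(U)$ have no literal meaning there, and the uniqueness of tubular neighbourhoods you invoke in paragraph 3 is a theorem about smooth manifolds, not about topological embeddings of CW complexes. The paper avoids this by explicitly lifting $N\to\fY$ to $N\to\Ho(\fY)$, observing that $M=N\times_{\fY}\fX\to\Ho(\fX)$ is then the induced map from a genuine manifold, and computing $h^*V_{U,n}\cong\nu(f_{N'})\oplus\bR^n$ classically on $M$; since stable vector bundles on $\Ho(\fX)$ are determined by their pullbacks under all such test maps, this suffices. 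Your argument becomes correct if you actually carry out paragraphs 2--4 on the manifolds $X\to Y$ (as your first paragraph promised) rather than on the homotopy type; once you do, the tubular-neighbourhood detour is also unnecessary, since the fat embedding $j$ is \emph{constructed} from tubular neighbourhoods and so $V_{U,n}$ is the normal bundle on the nose, not merely up to isotopy.
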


\begin{proof}
  Let $N$ be a manifold, $N \to \fY$ a map transverse to $f$ and let
  $N \dash \subset N $ be a relatively compact open submanifold. Let
  $f_N: M = N \times_{\fY} \fX \to N$ be the induced map and similarly
  for $N \dash$. Choose a lift $N \to \Ho(\fY)$ of $N \to \fY$ by the
  universal property of the homotopy type. The fibered product
  $\Ho(\fY) \times_{\fY} \fX$ is a model for $\Ho(\fX)$ and it is easy
  to see that
\[
\xymatrix{
M \ar[r]^-{h} \ar[d]^{f_N} & \Ho(\fX) \ar[d]\\
N \ar[r] & \Ho(\fY)\\
}
\]
is a pullback diagram. Clearly, the relatively compact $N \dash$ maps
into some $U \subset \Ho(\fY)$ as in \ref{u}.

It is clear that the pullback $h^* V_{U,n}$ is (canonically)
isomorphic to $\nu(f_{N \dash}) \oplus \bR^n$. This is precisely the
same statement as saying that $V$ is isomorphic to the homotopical
realization of the stable normal bundle of $f$. 
\end{proof}

The collapse construction finally defines a map of spectra $\suspinf
U_+ \to \bTh (V)$ and by adjunction a map of spaces $U
\to \loopinf \bTh(V)$. By construction, these maps agree on intersections $U \cap U \dash$.
This finishes the
proof of Theorem \ref{pontthom}.

\section{The moduli stack of stable curves and graphs}\label{moduli-exposition}

The stack $\mnbar{g}{n}$ was first constructed in the algebraic
category by Deligne, Mumford and Knudsen (in \cite{DM} when $n=0$ and
\cite{Knud} for general $n$).  We will need only the associated
orbifold in the category of differentiable stacks. For more
information about $\mnbar{g}{n}$, we refer to the textbook
\cite{HarMor} or the article \cite{Edi}.

\subsection{The moduli stack of stable curves} A nodal curve is a
complete complex algebraic curve $C$ all of whose singularities are
nodal, i.e. ordinary double points. The open subset of smooth points
of $C$ will be denoted by $C^{sm}$. The \emph{arithmetic genus} of a
connected nodal curve is the dimension of the vector space $H^1 (C,
\cO_C)$.  Let $C_1, \ldots , C_k$ be the components of $C$, let $g_i$
be the genus of $C_i$ and let $r$ be the number of nodes of $C$. Then
the arithmetic genus is given by
\begin{equation}\label{genusformula}
g = \sum_{i=1}^{k} (g_i-1) + r+1.
\end{equation}
All nodal curves in this paper are understood to be connected.  Given
a finite set $P$, a $P$-pointed nodal curve is a nodal curve $C$ with
an embedding of $P$ into $C^{sm}$. Such a curve is
\emph{stable} if its automorphism group is finite.  This means that
the Euler characteristic of each component of $C^{sm} \smallsetminus
P$ is negative, or equivalently, $C$ does
not contain an irreducible component which is a projective line with
fewer than $3$ marked points and nodes or an elliptic curve with no
marked points or nodes.

The stack $\mnbar{g}{P}$ is the lax sheaf of groupoids on the site
of schemes over $\bC$ in the \'etale topology which is given by:
\begin{enumerate}
\item The objects of $\mnbar{g}{n}(X)$ are pairs $(E
  \stackrel{\pi}{\to} X,j:X\times P \hookrightarrow E)$, where $\pi$ a
  proper morphism all of whose geometric fibers are reduced connected
  nodal curves of genus $g$, and $j$ is an embedding over $X$, and
  each fiber is a $P$-pointed stable nodal curve.  Such a triple a
  \emph{family of pointed curves over $X$}.
\item An isomorphism of families of pointed stable curves is an
isomorphism of schemes over $X$ which respects the embedding $j$.
\end{enumerate}
Deligne-Mumford-Knudsen \cite{DM}, \cite{Knud} constructed a smooth \'etale
atlas for $\mnbar{g}{P}$ in the category of schemes over
$\operatorname{spec}\bC$.  In the complex analytic category an
orbifold atlas is given by the degeneration spaces of Bers
\cite{Bers}, and another was constructed in \cite{SalRob}.  The
complex dimension of $\mnbar{g}{P}$ is $3g - 3 + |P|$.  An important
property of this stack is that its coarse moduli space is compact.

The symmetric group $\Sigma_P$ acts on $\mnbar{g}{P}$ by permuting the
marked points; thus $\Sigma_P$ acts on $\Ho(\M_{g,P})$.

\subsection{Stable graphs}

Following \cite{GK}, we introduce \emph{stable graphs} as a
combinatorial tool for working with the stratification of $\M_{g,n}$
and keeping track of iterations of gluing morphisms.

A \emph{graph} $\Gamma$ consists of a finite set $\ver (\Gamma)$ of
vertices, a finite set $H(\Gamma)$ of half-edges, together with an
involution $\sigma$ on $H(\Gamma)$ and a map $\tau: H(\Gamma) \to \ver
(\Gamma)$.  The set of half-edges incident at a vertex $v$ is
$\tau^{-1}(v)$.  An \emph{edge} is a free orbit of $\sigma$ and the
endpoints of an edge are the vertices that its half-edges are incident
at. We write $E(\Gamma)$ for the set of edges.  The fixed points of
$\sigma$ are the \emph{legs} of $\Gamma$.

A \emph{stable graph} is a graph $\Gamma$, together with a function
$g: \ver (\Gamma) \to \bN_{\geq 0}$ satisfying $g(v) > 0$ if the
valence $v$ is less than 3 and $g(v) > 1$ if the valence is 0.  The
\emph{genus} of a connected stable graph is defined to be
\[
g(\Gamma) = \sum_{v \in \ver (\Gamma)} (g(v)-1) + |E(\Gamma)|+1.
\]
We will need stable graphs equipped with an additional piece of data:
a subset $U$ of the univalent vertices of $\Gamma$ and for each $u\in
U$ a point $[F_u]: \ast \to \mnbar{g(u)}{1}$ corresponding to a stable
curve $F_u$. The vertices of $U$ are called \emph{pointed vertices}
and the stable curves $F_u$ are \emph{decorations}.  An
\emph{automorphism} of a stable graph consists of two bijections of
the sets $\ver (\Gamma)$ and $H(\Gamma)$ compatible with $\sigma$,
$\tau$ and the function $g$, fixing the legs pointwise, and sending
pointed vertices to pointed vertices with identical decorations.

Given a stable graph $\Gamma$ and an edge $e$ we produce three new
stable graphs.  The graph $\Gamma \smallsetminus e$ is obtained by
deleting the edge $e$ (if the resulting graph contains a bivalent
vertex of genus $0$, then replace it by a single edge or half-edge, as
appropriate). The graph $\Gamma | e$ is obtained by cutting $e$ into
two legs.  The graph $\Gamma / e$ is obtained by contracting the edge
$e$; if the endpoints of $e$ are two distinct vertices then one
identifies them and adds their genera, and if the endpoints are the
same then one increases the genus of that vertex by one.  More
generally, if $K$ is a set of edges then we construct
$\Gamma\smallsetminus K$, $\Gamma | K$, and $\Gamma/K$ by iterating
the above constructions.

Given a stable graph $\Gamma$, we define stacks
\begin{align*}
  & \M(\Gamma)  := \left( \prod_{v \in \ver (\Gamma) \smallsetminus U}
    \mnbar{g(v)}{\leg(v)} \times \prod_{v\in U}  \ast \right), \\
  & \M((\Gamma)) := \M(\Gamma)\hq \Aut(\Gamma).
\end{align*}
The substacks $\fM(\Gamma) \subset \M(\Gamma)$ and
$\fM((\Gamma))\subset \M((\Gamma))$ are defined analogously.  Observe
that as $\Gamma$ runs over the isomorphism classes of graphs (of type
$(g,n)$) with no pointed vertices, $\fM(\Gamma)$ runs over the open
strata of $\M_{g,n}$.  The decorations $[F_u]$ on the pointed univalent
vertices define an immersion $\M(\Gamma) \to
\M(\Gamma')$, where $\Gamma'$ is obtained from $\Gamma$ by replacing
all pointed vertices by ordinary vertices.  Note that there is a
canonical isomorphism $\M((\Gamma | K)) \cong
\M(\Gamma) \hq \operatorname{Stab}(K)$.

An edge $e$ determines a gluing morphism $\M(\Gamma) \to \M(\Gamma /
e)$ as follows.  Let $v$, $v'$ be the endpoints of $e$.  If $v$ and
$v'$ are distinct then this morphism is induced by the gluing morphism
$\mnbar{g(v)}{\tau^{-1}(v)} \times \mnbar{g(v')}{\tau^{-1}(v')} \to
\mnbar{g(v)+g(v')}{\tau^{-1}(v \sqcup v') \smallsetminus e}$ defined
by gluing curves together at the marked points corresponding to the
half-edges of $e$.  If $v=v'$ then it is induced by the gluing
morphism $\mnbar{g(v)}{\tau^{-1}(v)} \to
\mnbar{g(v)+1}{\tau^{-1}(v)\smallsetminus e}$ again defined by gluing
marked points together to form a node.  More generally, a set $K$ of
edges determines a gluing morphism
\[
\widetilde{\xi}_K: \M(\Gamma) \to \M(\Gamma/K),
\]
and if $K$ is $\Aut(\Gamma)$-invariant then this morphism is
equivariant and hence descends to a morphism
\[
\xi_K: \M((\Gamma)) \to \M((\Gamma/K)).
\]
Let $\ast_{g,n}$ denote the stable graph consisting of a single genus $g$
vertex and $n$ legs; its automorphism group is trivial.  Contracting
all edges of $\Gamma$ gives a map $\xi_{E(\Gamma)} : \M ((\Gamma)) \to
\fM((\ast_{g,n})) = \mnbar{g}{n}$.  The restriction of
$\xi_{E(\Gamma)}$ to $\fM(\Gamma)$ is the inclusion of the open
stratum labeled by $\Gamma$.

Given a leg $h$, there is a forgetful morphism $\M(\Gamma) \to
\M(\Gamma \smallsetminus h)$ given by forgetting the marked point on a
stable curve corresponding to the leg $h$.  (As usual, if we forget a
marked point on a genus zero component with only two additional marked
points or nodes then we collapse that component to a node). More
generally, given a set of edges $K$, there
is a forgetful morphism
\[
\widetilde{\pi}_K: \M(\Gamma) \to \M(\Gamma \smallsetminus K);
\]
if $K$ is $\Aut(\Gamma)$-invariant, then it descends to
\[
\pi_K: \M((\Gamma)) \to \M((\Gamma \smallsetminus K)).
\]

\subsection{Vector bundles on $\M(\Gamma)$ and stripping and splitting }

On $\mnbar{g}{n}$ there are complex line bundles $\widetilde{L}_1,
\ldots, \widetilde{L}_n$ whose fibres at a given curve are the tangent
spaces at each of the marked points.  Hence a half-edge $h$ of
$\Gamma$ determines a complex line bundle $\widetilde{L}_h$ on
$\M(\Gamma)$.  More generally, a set $H$ of half-edges determines a
vector bundle $\widetilde{L}_H = \oplus_{h\in H} \widetilde{L}_h$, and
this bundle is $\Aut(\Gamma)$-equivariant whenever $H$ is
$\Aut(\Gamma)$-invariant.  Assuming $H$ is invariant,
$\widetilde{L}_H$ is classified by a map $\widetilde{L}_H: \M(\Gamma)
\to BU(1)^H$ which descends to homotopy orbits to give a map
\begin{align*}
L_H: \M((\Gamma)) \to B(U(1)^H \rtimes \Aut(H,\Gamma)) \\
= E\Aut(H,\Gamma)\times_{\Aut(H,\Gamma)} BU(1)^H,
\end{align*}
where $\Aut(H,\Gamma)$ is the group of permutations of $H$ which are
induced by automorphisms of $\Gamma$.

Given a set $H$ of half-edges, let $E_H$ denote the set of edges which
contain elements of $H$.  The following proposition will be used in
the proof of Theorem \ref{mainthm}.
\begin{proposition}\label{strip-and-split-prop}
  (i) If $H$ a set of half-edges consisting of an $\Aut(\Gamma)$-orbit
  of edges between ordinary vertices (not necessarily distinct, having
  genera $g_1$ and $g_2$) then the $\Aut(\Gamma)$-equivariant map
  \begin{equation}\label{stripsplit1}
    \fM(\Gamma) \stackrel{\widetilde{\pi}_{E_H} \times \widetilde{L}_H}{\longrightarrow} \fM(\Gamma \smallsetminus E_H) \times BU(1)^H
  \end{equation}
  is a homology isomorphism in degrees $*\leq
  \operatorname{min}\{g_1/2-1, g_2/2-1\}$, and hence it induces an
  isomorphism in this range on the homotopy orbits.

  (ii) If $H$ is an $\Aut(\Gamma)$-orbit of half-edges $h$ (incident
  at an ordinary vertex of genus $g_1$) such that $\sigma(h)$ is
  incident at a univalent pointed vertex then the
  $\Aut(\Gamma)$-equivariant map
  \begin{equation}\label{stripsplit2}
    \fM(\Gamma) \stackrel{\widetilde{\pi}_{E_H} \times \widetilde{L}_H}{\longrightarrow}
    \fM(\Gamma \smallsetminus E_H) \times BU(1)^H,
  \end{equation}
  is a homology isomorphism in degrees $* \leq g_1/2-1$, and hence
  it induces a homology isomorphism in this range on homotopy
  orbits.
\end{proposition}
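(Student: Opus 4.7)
The plan is to reduce the proposition to a single half-edge statement, handled by comparing Gysin sequences of two $S^1$-bundles with Harer-Ivanov homological stability as input. \emph{First, reductions.} Since $\Aut(\Gamma)$ acts equivariantly on both sides of \eqref{stripsplit1} and \eqref{stripsplit2}, a homology isomorphism in degrees $* \leq N$ between the total spaces yields one in the same range on homotopy orbits, by comparing Serre spectral sequences of the Borel fibrations over $B\Aut(\Gamma)$. So it suffices to treat the non-equivariant case. Using the product decomposition $\fM(\Gamma) = \prod_{v \in \ver(\Gamma) \smallsetminus U} \fM_{g(v), \leg(v)}$, the map factors as a product over vertices, and by the K\"unneth formula I reduce to showing, at each vertex $v$ with $H_v := H \cap \leg(v) \neq \emptyset$, that
\[
\fM_{g(v), \leg(v)} \longrightarrow \fM_{g(v), \leg(v) \smallsetminus H_v} \times BU(1)^{H_v}
\]
is a homology isomorphism in degrees $* \leq g(v)/2 - 1$. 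In part (i) the touched vertices have genus in $\{g_1, g_2\}$ and in part (ii) the unique touched ordinary vertex has genus $g_1$, matching the claimed ranges. An iteration over the half-edges of $H_v$ further reduces to the case $|H_v|=1$.

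\emph{Second, the key step.} The single half-edge statement is that
\[
\phi \colon \fM_{g, n+1} \longrightarrow \fM_{g, n} \times BU(1),
\]
(forgetting the last marked point and recording its tangent line bundle $L_{n+1}$) is a homology isomorphism in degrees $* \leq g/2 - 1$. Let $P \to \fM_{g, n+1}$ be the principal $S^1$-bundle associated to $L_{n+1}$; equivalently, $P$ is the moduli stack of genus $g$ surfaces with $n$ marked points and a parametrized boundary circle, with $S^1$ acting by rotating the parametrization. Capping off the boundary by an unmarked disk gives an $S^1$-invariant map $P \to \fM_{g,n}$, which is a homology isomorphism in degrees $* \leq g/2 - 1$ by Harer-Ivanov homological stability for mapping class groups. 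A classifying map $P \to EU(1)$ for the $S^1$-bundle then produces a morphism of principal $S^1$-bundles
\[
\xymatrix{
P \ar[r] \ar[d] & \fM_{g, n} \times EU(1) \ar[d] \\
\fM_{g, n+1} \ar[r]^{\phi} & \fM_{g, n} \times BU(1),
}
\]
whose right column is pulled back along projection to $BU(1)$. The top horizontal map is a homology isomorphism in degrees $* \leq g/2 - 1$ because $EU(1)$ is contractible. An induction on $i$ using the five-lemma on the Gysin sequences of the two $S^1$-bundles then shows that $\phi$ is a homology isomorphism in degrees $* \leq g/2 - 1$.

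\emph{Third, part (ii) and the main obstacle.} In part (ii) the decoration $[F_u]$ at each pointed univalent vertex contributes only a single-point factor to both $\fM(\Gamma)$ and $\fM(\Gamma \smallsetminus E_H)$, and the forgetful map at the unique touched ordinary vertex is again of $\phi$-type, so the argument goes through with the range $* \leq g_1/2 - 1$. The hardest part is the key step in the previous paragraph; the delicate point there is identifying $L_{n+1}$ with the Chern class of the $S^1$-bundle $P$ coming from boundary rotations, which is what allows classical boundary-based homological stability to be leveraged into a statement involving the line bundle factor $BU(1)$.
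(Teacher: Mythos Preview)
Your proof is correct and follows essentially the same route as the paper: both reduce (via K\"unneth and the Borel spectral sequence) to the single-vertex ``stripping and splitting'' statement, and both deduce that statement from Harer--Ivanov stability applied to the moduli space $\fM_{g,P,\vec{Q}}$ of curves with tangent vectors at the $Q$-points (your $P$ when $|Q|=1$). The only packaging difference is that the paper compares the two fibrations over $BU(1)^Q$ via the Serre spectral sequence in one shot, whereas you iterate down to $|Q|=1$ and compare the associated principal $S^1$-bundles via their Gysin sequences and the five-lemma; these are equivalent arguments.
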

\begin{proof}
  B\"odigheimer and Tillmann proved in \cite{BoeTill} that
  Harer-Ivanov stability \cite{Har}, \cite{Iv} implies that the
  $\Sigma_P\times \Sigma_Q$-equivariant ``stripping-and-splitting'' map
  \[
  \fM_{g,P\sqcup Q} \stackrel{\pi_Q\times L_Q}{\longrightarrow}
  \fM_{g,P} \times BU(1)^Q
  \]
  is a homology isomorphism in degrees $*\leq g/2-1$.  The proposition
  is a straighforward application of this. The statements about
  homotopy quotients follow from a standard argument with the
  Leray-Serre spectral sequence.
\end{proof}

\begin{remark}
  Here is a proof of the theorem in \cite{BoeTill}, easier than the
  original one. The stripping and splitting map is the middle vertical
  arrow in the following diagram (whose rows are homotopy-fibrations)
  \[
  \xymatrix{
    \fM_{g,P,\vec{Q}} \ar[r] \ar[d] &  \fM_{g,P\sqcup Q} \ar[d] \ar[r] & BU(1)^Q \ar@{=}[d]\\
    \fM_{g,P} \ar[r] & \fM_{g,P} \times BU(1)^Q \ar[r] &
    BU(1)^Q,}
  \]
  where $\fM_{g,P,\vec{Q}}$ is the moduli stack of smooth curves of
  genus $g$ with $|P|$ marked points and $|Q|$ additional marked
  points equipped with a nonzero tangent vector.  The left vertical arrow
  is a homology equivalence in the stable range by Harer-Ivanov
  stability.  The base space is simply-connected, so both fibrations
  are simple.  Thus a straightforward application of the Leray-Serre
  spectral sequence finishes the proof.
\end{remark}

\subsection{The irreducible components of the boundary}\label{boundary-section}

Let $\sD$ denote the set of irreducible components of the boundary of
$\M_{g,n}$.  The elements of $\sD$ are indexed by the (isomorphism
classes of) stable graphs of genus $g$ with $n$ legs, a single edge
$e=\{h_1, h_2\}$, and no pointed vertices.  We call such a stable
graph \emph{elementary}.  The elementary graph consisting of a single
vertex with a loop and $n$ legs is denoted $\Gamma_{irr}$, and it
corresponds to the locus of curves with a non-separating node.  The
other boundary components correspond to elementary graphs with two
vertices; they are indexed by the partition of $g$ between the two
vertices and the subset $P\subset \{1, \ldots, n\}$ of legs incident
at the vertex of lesser genus.

Given $\alpha \in \sD$, let $\Gamma_\alpha$ denote the corresponding
elementary graph.  The sole edge of $\Gamma_\alpha$ determines a
gluing morphism
\[
\xi_\alpha: \M((\Gamma_\alpha)) \to \M_{g,n}
\]
which is a complex codimension 1 immersion whose image is precisely the
boundary component $\alpha$.  The elementary graphs with two vertices
and a nonzero number of legs incident
at the vertex of smaller genus correspond to the boundary components
which have no self-intersections, so the gluing morphisms for these
are embeddings.  We will only be interested in the
self-intersecting boundary components.  Let $\sD^+ \subset \sD$
denote the set of boundary components which have nontrivial
self-intersections.  See Figure \ref{simplegraphs}.
\begin{figure}
\begin{center}
\input{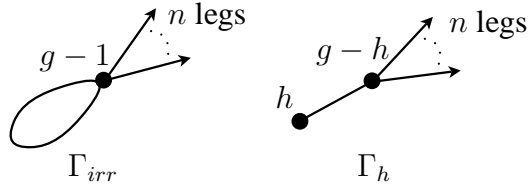}
\end{center}
\caption{\label{simplegraphs}Elementary graphs indexing the boundary
  components of $\sD^+$; i.e. those which have nontrivial
  self-intersections.}
\end{figure}

Let $\Gamma_\alpha$ be an elementary graph with edge $e=\{h_1,h_2\}$.
The gluing morphism $\xi_\alpha = \xi_e : \M((\Gamma_\alpha)) \to
\M_{g,n}$ has normal bundle is $L_{h_1} \otimes L_{h_2}$ (see
\cite{HarMor} p.101).  Note that if $\alpha=(irr)$ then there is an
automorphism swapping the two half-edges, so one only has this tensor
decomposition after pulling back to $\M(\Gamma_{irr})$.  Thus the
structure group of the normal bundle of $\xi_\alpha$ can be uniformly
written as $T(2)\rtimes \Aut(\Gamma_\alpha)$.

\section{The Pontrjagin-Thom maps for $\mnbar{g}{n}$ in homology}\label{maintheorem-section}

\subsection{Statement of results and outline of proof}

In this section we will state and prove our main result in full
generality.  But first we need to set up some terminology and
notation.  Throughout this section we will drop the notational
distinction between stacks and their homotopy types and between stack
quotients and the corresponding homotopy quotients.  We fix a genus
$g$ and number $n$ of marked points throughout.

For any boundary component $\alpha\in \sD^+$, the complex codimension
1 immersion $\xi_\alpha : \M((\Gamma_\alpha)) \to \mnbar{g}{n}$ has a
normal bundle $\nu(\alpha)$.  By the discussion of section
\ref{boundary-section}, the structure group of $\nu(\alpha)$ is
equipped with a distinguished lift to $T(2)\rtimes
\Aut(\Gamma_\alpha)$. Hence the Pontrjagin-Thom construction from Theorem \ref{pontthom} provides maps
\begin{align*}
\Phi^1_\alpha:\: & \mnbar{g}{n} \to \Q\M(\Gamma_\alpha)^{\nu(\alpha)} \to \Q B(T(2)\rtimes \Aut(\Gamma_\alpha))^V,\\
\Phi^0_\alpha:\: & \mnbar{g}{n} \to \Q\M(\Gamma_\alpha)^{\nu(\alpha)} \to \Q BU(1)^V,
\end{align*}
(the difference is whether or not we use the lifted structure group).
We will prove that a product of several maps of the above type is
surjective on homology (with field coefficients) in a range of
degrees.  We consider both, $\Phi^1_\alpha$ and $\Phi^0_\alpha$,
because the target of the former has larger homology, while the range
of surjectivity is often larger for the latter.

Fix a subset $A \subset \sD^+$ and a function $\ell: A \to \{0,1\}$,
such that $\ell(irr)=1$ if $(irr) \in A$.  For each $\alpha \in A$,
define
\[
G_\alpha = \left\{ \begin{array}{ll}
    N(2) & \alpha = (irr)  \\
    T(2) & \alpha \neq (irr), \ell(\alpha)=1 \\
    U(1) & \ell(\alpha) = 0
\end{array} \right.
\]
Thus the target of $\Phi_\alpha^{\ell(\alpha)}$ is $\Q BG_\alpha^V$.
Set $g_{irr} = 1$, and for any other elementary graph $\Gamma_\alpha$
set $g_\alpha$ to be the lesser of the genera of the two vertices.  We
define an \emph{$A$-partition} of $g$ to be a set
$\vec{m}:=(m_{\alpha})_{\alpha \in A}$ of nonnegative integers such
that $r := g - \sum_{\alpha \in A} m_\alpha g_\alpha$ is nonnegative.
Given an $A$-partition $\vec{m}$, we set
\begin{align*}
c(A,\ell, \vec{m})  := & \min \left\{r/2-1, \quad m_\alpha/2
  \quad (\alpha \in A),\right. \\ 
   &  \left. \quad g_\alpha/2 -1 \quad (\alpha \in A
  \mbox{ with } \ell(\alpha)=1 \mbox{ and } \alpha\neq irr) \right\}, \\
c(A,\ell)  := & \max \{c(A, \ell , \vec{m}) \quad | \quad \mbox{$\vec{m}$
  an $A$-partition of $g$}\}.
\end{align*}

\begin{theorem}\label{mainthm}
Given a pair $(A\subset \sD^+, \ell: A \to \{0,1\})$, the map
\[
\prod_{\alpha \in A} \Phi_{\alpha}^{\ell(\alpha)}: \Ho(\mnbar{g}{n})
\to \prod_{\alpha \in A} \Q BG_\alpha^V
\]
is surjective on ordinary homology with field coefficients in degrees
$*\leq c(A,\ell)$.
\end{theorem}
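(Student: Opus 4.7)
\medskip
\noindent\textbf{Proof sketch for Theorem \ref{mainthm}.}
The plan is to reduce the surjectivity statement, for each $A$-partition $\vec{m}$ separately, to an explicit construction of cycles in $\Ho(\mnbar{g}{n})$ arising from deep boundary strata. The key is that the free infinite loop space $\Q X$ splits stably as $\bigvee_{k\geq 0} D_k X$, where $D_k X = E\Sigma_k{}_+ \wedge_{\Sigma_k} X^{\wedge k}$ is the $k$-th extended power; in mod $p$ homology this gives a decomposition
\[
H_*\Bigl(\prod_{\alpha\in A} \Q BG_\alpha^V;\bF\Bigr) \;\cong\; \bigotimes_{\alpha\in A} \bigoplus_{k\geq 0} H_*\bigl(D_k BG_\alpha^V;\bF\bigr),
\]
and it is well known that for immersions the composition of the Pontrjagin--Thom map with the $k$-fold diagonal/transfer captures exactly the $k$-fold self-intersection locus. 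So the heart of the argument is to realize each multi-index $(k_\alpha)_{\alpha\in A}$ with $k_\alpha = m_\alpha$ by a sufficiently deep boundary stratum, and to identify the homology of this stratum with the homology of the target in a range.

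\medskip
For an $A$-partition $\vec m=(m_\alpha)$ of $g$, define $\Gamma(\vec m)$ to be the stable graph obtained by taking a central vertex of genus $r=g-\sum m_\alpha g_\alpha$ carrying all $n$ legs and attaching, for each $\alpha\in A$, an $\Aut(\Gamma_\alpha)$-symmetric bouquet of $m_\alpha$ copies of $\Gamma_\alpha$ (a loop for $\alpha=(irr)$, an edge terminating at a genus-$g_\alpha$ vertex for the other $\alpha$). The automorphism group contains $\prod_\alpha(\Aut(\Gamma_\alpha)\wr \Sigma_{m_\alpha})$. First I would use Theorem \ref{pontthom} (naturality of the Pontrjagin--Thom construction under transverse pullback along the gluing morphism $\M((\Gamma(\vec m)))\to\mnbar{g}{n}$) to factor the composition
\[
\M((\Gamma(\vec m)))\;\longrightarrow\;\mnbar{g}{n}\;\xrightarrow{\;\prod\Phi_\alpha^{\ell(\alpha)}\;}\;\prod_{\alpha\in A}\Q BG_\alpha^V
\]
through a map that lands in $\prod_{\alpha\in A}D_{m_\alpha}BG_\alpha^V$ and is, up to lower-order strata in the Pontrjagin--Thom filtration, the canonical map induced by the tautological cotangent line bundles at the half-edges together with the structure group reductions from section \ref{boundary-section}.

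\medskip
The next step is to identify $H_*(\M((\Gamma(\vec m))))$ in a range. Cutting all edges of $\Gamma(\vec m)$ and applying Proposition \ref{strip-and-split-prop} iteratively (once for each edge-orbit, using part~(ii) for the edges meeting the central genus-$r$ vertex when $\alpha\neq(irr)$ and part~(i) for the $(irr)$-loops) yields a zig-zag of $\Aut(\Gamma(\vec m))$-equivariant homology isomorphisms in degrees
\[
*\leq c(A,\ell,\vec m)=\min\{r/2-1,\;m_\alpha/2,\;g_\alpha/2-1\text{ where applicable}\}
\]
between $\M(\Gamma(\vec m))$ and a product whose factors are $\fM_{r,n+2\sum m_\alpha}$, copies of $\mnbar{g_\alpha}{1}$, and copies of $BU(1)^H$ where the $H$ records the normal line bundles. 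Passing to homotopy orbits under $\prod(\Aut(\Gamma_\alpha)\wr\Sigma_{m_\alpha})$ transforms the $BU(1)^H$-factors precisely into $\prod_\alpha D_{m_\alpha}BG_\alpha^V$-type pieces (the $V$ being the universal line bundle, and the $N(2)$ versus $T(2)$ versus $U(1)$ reductions correspond exactly to the three cases in the definition of $G_\alpha$). The central factor contributes a base point class in the relevant range.

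\medskip
Finally, I would show that in the range $*\leq c(A,\ell,\vec m)$ the resulting map surjects onto $\bigotimes_\alpha H_*(D_{m_\alpha} BG_\alpha^V;\bF)$: this is a direct Leray--Serre spectral-sequence check using that the relevant $B\Sigma_{m_\alpha}$ and $BG_\alpha$ factors split off under the equivariant stripping procedure. Taking the union over all $A$-partitions of $g$ yields surjectivity onto every tensor factor $\bigotimes_\alpha H_*(D_{k_\alpha}BG_\alpha^V;\bF)$ with $k_\alpha\leq m_\alpha$ for some admissible $\vec m$; since $c(A,\ell)=\max_{\vec m}c(A,\ell,\vec m)$ by definition, the claimed surjectivity holds through degree $c(A,\ell)$. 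The main obstacle I expect is the bookkeeping in step three: one must verify that the contributions from shallower boundary strata (corresponding to partitions $\vec m'$ with $\sum m'_\alpha<\sum m_\alpha$) appear in the Pontrjagin--Thom factorization only in a lower filtration, so that they do not obstruct surjectivity onto the top extended-power piece for $\vec m$; this is precisely where the splitting $\Q X\simeq \bigvee D_k X$ and the transversality of iterated gluings are used.
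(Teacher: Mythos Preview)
Your overall architecture is right: build a deep boundary stratum from a stable graph $\Gamma(\vec m)$, map it into $\mnbar{g}{n}$, and use stripping-and-splitting (Proposition~\ref{strip-and-split-prop}) to control its homology. This is exactly what the paper does. But the mechanism you propose for connecting this to the target $\prod_\alpha \Q BG_\alpha^V$ is different from the paper's, and it does not work as stated.

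Your claim that the composite factors through $\prod_\alpha D_{m_\alpha} BG_\alpha^V$ is not correct. What stripping-and-splitting actually gives you on homotopy orbits is a map to $\prod_\alpha B(G_\alpha\wr\Sigma_{m_\alpha})$; this is \emph{not} the extended power $D_{m_\alpha} BG_\alpha^V$ (the latter is a Thom space over the former). More seriously, even if you had a factorization hitting only the top extended-power piece $D_{m_\alpha}$, your ``union over all $A$-partitions'' argument breaks: to hit a piece $\bigotimes_\alpha H_*(D_{k_\alpha} BG_\alpha^V)$ with some small $k_\alpha$, you would need to use $\vec m'=(k_\alpha)$, but then $c(A,\ell,\vec m')\leq k_\alpha/2$ is far below the degree you are trying to reach. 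So the pieces with small $k_\alpha$ are never covered in the required range. You also never explain where the constraint $m_\alpha/2$ in $c(A,\ell,\vec m)$ comes from in your framework; strip-and-split only produces the $r/2-1$ and $g_\alpha/2-1$ constraints.

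The paper avoids all of this by replacing the Snaith splitting with the \emph{group completion} map. One fixes a \emph{single} $\vec m$ maximizing $c(A,\ell,\vec m)$ and considers the diagram
\[
\xymatrix{
\fM((\Gamma)) \ar[r]^{\xi_{E(\Gamma)}} \ar[d]_{L_{H_0}} & \mnbar{g}{n} \ar[r]^-{\prod\Phi_\alpha^{\ell(\alpha)}} & \prod_\alpha \Q BG_\alpha^V \\
\prod_\alpha B(G_\alpha\wr\Sigma_{m_\alpha}) \ar[rr]^-{\prod\groupcompletion_\alpha} & & \prod_\alpha \Q_{(m_\alpha)}(BG_\alpha)_+ \ar[u]_{\prod \Q\inc}
}
\]
The bottom map $\groupcompletion_\alpha$ is the Barratt--Priddy--Quillen--Segal group-completion map, and homology stability for symmetric groups (with twisted coefficients) makes it a homology isomorphism in degrees $\leq (m_\alpha-1)/2$; this is precisely the source of the $m_\alpha/2$ constraint and it hits \emph{all} of $H_*(\Q_{(m_\alpha)}(BG_\alpha)_+)$ at once, not just a single extended-power summand. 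The right vertical map $\Q\inc$ is then shown to be homology surjective because the zero section $BG_\alpha\hookrightarrow BG_\alpha^V$ is surjective on $\widetilde H_*$ (the Euler class of $V$ is a non-zero-divisor in each case), and $\Q$ preserves homology surjections by the Dyer--Lashof description of $H_*(\Q X;\bF)$. You do not address this step at all.

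Finally, the homotopy commutativity of the square is established by a clean transfer lemma (the paper's Lemma~\ref{transfer-lemma}): for $Z=\fM((\Gamma))$ mapping into the image of the immersion $\xi_\alpha$, the pulled-back Pontrjagin--Thom map equals the Becker--Gottlieb transfer for the finite cover $\fM((\Gamma|e))\to\fM((\Gamma))$ followed by $\Q\inc$. This replaces your ``up to lower-order strata in the Pontrjagin--Thom filtration'' with an on-the-nose identification; the bookkeeping you flag as the main obstacle simply does not arise.
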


\begin{remark}
\begin{enumerate}
\item The special case $A =\{(irr)\}$ gives case (1) of Theorem
  \ref{mainthmspecial}.  Taking $A=\{h\}$ and $\ell(h)=1$ or $0$ give
  the other two cases.
\item As we will see in the proof of this theorem, the homology
  surjectivity comes from boundary components that have high numbers
  of self-intersections.  Thus Pontrjagin-Thom maps for the boundary
  components which are embedded (rather than immersed) factor as
\[
\mnbar{g}{n} \to BT(2)^V \to \Q(BT(2)).
\]
Such maps cannot be surjective in homology in a range because the
second map is not.  This is why the theorem refers only to
self-intersecting boundary components.
\end{enumerate}
\end{remark}

Here is an outline of the proof of Theorem \ref{mainthm}.  We consider
the following diagram:
\begin{equation}\label{main-diagram}
\xymatrix{\fM((\Gamma)) \ar[r]^{\xi_{E(\Gamma)}} \ar[d]^-{L_{H_0}} & \mnbar{g}{n}
  \ar[r]^-{\prod \Phi_\alpha^{\ell(\alpha)}} & \prod_{\alpha \in A} \Q BG_\alpha^V \\
  \prod_{\alpha \in A} B(G_\alpha \wr \Sigma_{m_\alpha})
  \ar[rr]^-{\prod \groupcompletion_\alpha} & & \prod_{\alpha \in A} \Q_{(m_\alpha)}
  (BG_\alpha)_+ \ar[u]^-{\prod \Q \inc}.  }
\end{equation}
Here $\fM((\Gamma))$ is the open stratum in $\mnbar{g}{n}$ determined
by a certain stable graph $\Gamma$.  Going around counter-clockwise:
the map $L_{H_0}$ is the classifying map for a vector bundle
determined by a certain set $H_0$ of half-edges of $\Gamma$; the maps
$\groupcompletion_\alpha$ are components of the group completion map
appearing in the Barratt-Priddy-Quillen-Segal Theorem; the last map,
$\Q \inc$, is induced by the inclusion of the zero section into the Thom
space.  We will define each of these maps in detail and show that they
are each surjective on homology in certain ranges of degrees, and that
the above diagram commutes up to homotopy.

\subsection{Definition of the stable graph $\Gamma$}
Fix an $A$-partition $\vec{m}$ such that $c(A,\ell, \vec{m})$ is
maximal and construct $\Gamma$ as follows.  There is a vertex $v$ of
genus $r$, $n$ legs incident at $v$, and $m_{irr}$ loops at $v$ (if
$(irr) \in A$).  For each $\alpha \in A \smallsetminus (irr)$ there
are $m_{\alpha}$ additional vertices of genus $g_{\alpha}$, each of
which is connected to $v$ by a single edge.  For each $\alpha$ with
$\ell(\alpha)=0$ the univalent vertices of genus $g_\alpha$ are
pointed (all with the same decoration).  See figure \ref{testgraph}.

The automorphism group of $\Gamma$ is
\[
\Aut(\Gamma) \cong \prod_{\alpha \in A} \Aut(\Gamma_\alpha) \wr
\Sigma_{m_\alpha}.
\]
Let $H_0$ denote the set of all half-edges incident at ordinary
vertices.  One then has
\[
U(1)^{H_0} \rtimes \Aut(\Gamma) \cong
\prod_{\alpha \in A} G_{\alpha}\wr \Sigma_{m_\alpha}.
\]
\begin{figure}
\input{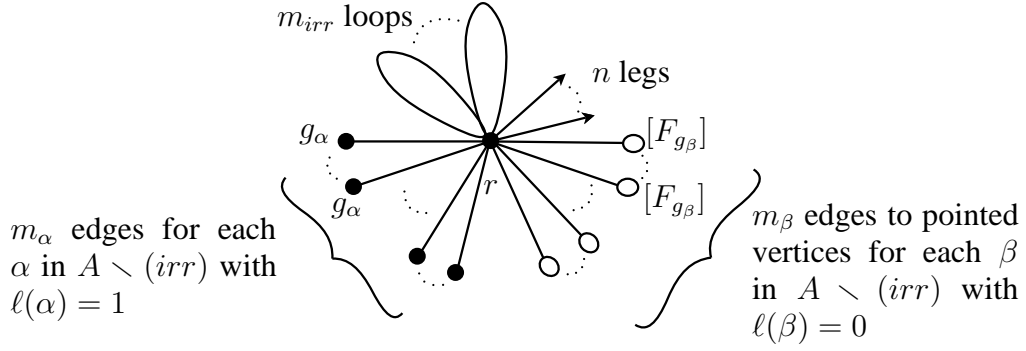}
\caption{\label{testgraph} The test graph $\Gamma$ for the proof of Theorem
  \ref{mainthm}.}
\end{figure}

\subsection{The classifying map of $L_{H_0}$}\label{harerivan}

Recall that $H_0$ is the set of half-edges of $\Gamma$ which are
incident at ordinary vertices; this determines a vector bundle
$L_{H_0}$ on $\M((\Gamma))$ with structure group $U(1)^{H_0}\rtimes
\Aut(\Gamma) = \prod_{\alpha \in A} G_\alpha \wr \Sigma_{m_\alpha}$.

\begin{lemma}
  The classifying map $L_{H_0}: \M((\Gamma)) \to \prod_{\alpha \in A}
  B(G_\alpha \wr \Sigma_{m_\alpha})$ is surjective on homology in
  degrees
  \[
  * \leq \operatorname{min} \left\{ r/2-1 ,\:\: g_{\alpha}/2-1 \:\:
    (\mbox{for $\alpha$ such that $\ell(\alpha)=1$}) \right\}.
  \]
\end{lemma}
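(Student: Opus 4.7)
My plan is to factor $L_{H_0}$ through an iterated stripping-and-splitting using Proposition~\ref{strip-and-split-prop}, and then to identify the composition with a projection that is surjective on homology via a section coming from the semidirect product structure of the resulting homotopy orbit.

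The $\Aut(\Gamma)$-orbits of edges of $\Gamma$ fall into three types: loops at the central vertex $v$ (when $(irr)\in A$, with both endpoints at $v$ of genus $r$); for each $\alpha\in A\smallsetminus\{irr\}$ with $\ell(\alpha)=1$, edges from $v$ to an outer ordinary vertex of genus $g_\alpha$; and for each $\alpha\in A$ with $\ell(\alpha)=0$, edges from $v$ to a univalent pointed vertex. I apply Proposition~\ref{strip-and-split-prop}(i) to the first two types and Proposition~\ref{strip-and-split-prop}(ii) to the third; iterating yields an $\Aut(\Gamma)$-equivariant homology isomorphism
\[
\fM(\Gamma)\longrightarrow \fM(\Gamma\smallsetminus E(\Gamma))\times BU(1)^{H_0}
\]
in the claimed range, since loops at $v$ contribute the bound $r/2-1$ (both endpoints at $v$), edges of the second type contribute $\min\{r/2-1,g_\alpha/2-1\}$, and edges of the third type contribute $r/2-1$ by (ii). The proposition also provides the corresponding homology isomorphism after passing to homotopy orbits by $\Aut(\Gamma)$.

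Composing with the projection to $BU(1)^{H_0}\hq\Aut(\Gamma)\cong B(U(1)^{H_0}\rtimes\Aut(\Gamma))=\prod_{\alpha\in A}B(G_\alpha\wr\Sigma_{m_\alpha})$ produces a map homotopic to $L_{H_0}$, since by construction the $BU(1)^{H_0}$-factor of the iterated strip-and-split map is the $\Aut(\Gamma)$-equivariant classifying map of the bundle $\widetilde{L}_{H_0}$; the identification of the target uses the description of $\Aut(\Gamma)=\prod_\alpha\Aut(\Gamma_\alpha)\wr\Sigma_{m_\alpha}$ and the $\Aut(\Gamma)$-decomposition of $H_0$. To see this projection is split surjective on homology, observe that $\fM(\Gamma\smallsetminus E(\Gamma))\simeq \fM_{r,N_0}\times\prod_\alpha \fM_{g_\alpha,1}^{m_\alpha}$ has homotopy type $B\pi$, where $\pi=\Gamma_r^{N_0}\times\prod_\alpha(\Gamma_{g_\alpha}^1)^{m_\alpha}$ is a product of mapping class groups. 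Since $\Aut(\Gamma)$ acts on $\pi$ by permutations, the homotopy orbit $\fM(\Gamma\smallsetminus E(\Gamma))\hq\Aut(\Gamma)$ has homotopy type $B(\pi\rtimes\Aut(\Gamma))$, and the group-theoretic splitting $g\mapsto(1,g)$ of the semidirect product induces a section $B\Aut(\Gamma)\to B(\pi\rtimes\Aut(\Gamma))$ of the projection to $B\Aut(\Gamma)$. Pulling this back along $BU(1)^{H_0}\hq\Aut(\Gamma)\to B\Aut(\Gamma)$ gives the required section.

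The main subtlety will be a careful verification that the iterated strip-and-split compositions are coherent and that the resulting map is homotopic to $L_{H_0}$ --- this ultimately comes down to a diagram-chase using the naturality of Proposition~\ref{strip-and-split-prop}. A secondary point is the identification of $\fM(\Gamma\smallsetminus E(\Gamma))\hq\Aut(\Gamma)$ with $B(\pi\rtimes\Aut(\Gamma))$ at the level of homotopy types, which relies on the fact that the moduli stack of smooth curves is an orbifold whose associated homotopy type is the classifying space of its mapping class group, together with the formula $\Ho(\fX\hq G)\simeq EG\times_G\Ho(\fX)$ from the earlier section.
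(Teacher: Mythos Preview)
Your proof is correct and follows essentially the same strategy as the paper: factor the classifying map through an iterated application of Proposition~\ref{strip-and-split-prop} (orbit by orbit), then observe that the remaining projection $\fM(\Gamma\smallsetminus E(\Gamma))\times BU(1)^{H_0}\to BU(1)^{H_0}$ is split-surjective after passing to $\Aut(\Gamma)$-homotopy orbits. The only real difference is in how you produce the section: the paper simply picks an $\Aut(\Gamma)$-fixed point of $\M(\Gamma\smallsetminus E(\Gamma))$ (e.g.\ a diagonal point), which immediately gives an equivariant section; your argument via the semidirect product structure $B(\pi\rtimes\Aut(\Gamma))$ and the group-theoretic splitting $g\mapsto(1,g)$ amounts to the same construction phrased in classifying-space language, so it is correct but a bit more elaborate than necessary.
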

\begin{proof}
  Let $E_{irr}$ denote the set of loops at the central vertex, and for
  $\alpha\neq (irr)$ let $E_\alpha$ denote the set of edges of
  $\Gamma$ which meet an outer vertex of genus $g_\alpha$.  Let
  $H_\alpha$ denote the set of half-edges lying in $E_\alpha$ which
  are incident at an ordinary vertex.  There is an
  $\Aut(\Gamma)$-invariant decomposition $H_0= \coprod_{\alpha \in A}
  H_\alpha$, The classifying map for the bundle $\widetilde{L}_{H_0}$
  on $\M(\Gamma)$ factors as
\begin{align*}
  \M(\Gamma) & \to
     \M(\Gamma \smallsetminus E_{\alpha_1}) \times
     BU(1)^{H_{\alpha_1}} \\
    & \longrightarrow \M(\Gamma \smallsetminus E_{\alpha_1} \cup
                 E_{\alpha_2}) \times BU(1)^{H_{\alpha_1}\cup H_{\alpha_2}} \\
    & \to \cdots \to \M(\Gamma \smallsetminus
                  \cup_{\alpha \in A} E_\alpha) \times BU(1)^{H_0} \\
    & \stackrel{\operatorname{proj}}{\longrightarrow} BU(1)^{H_0},
\end{align*}
where the first sequence of arrows are the stripping-and-splitting
maps of Proposition \ref{strip-and-split-prop}, and the final arrow is
simply projection.  All of these maps are $\Aut(\Gamma)$-equivariant,
and the final map admits an equivariant section by choosing a fixed
point in $\M(\Gamma \smallsetminus \cup_\alpha E_\alpha)$.  One
obtains the classifying map of the bundle $L_{H_0}$ on $\M((\Gamma))$
by passing to homotopy orbits.  The sequence of
stripping-and-splitting maps induce homology isomorphisms in the
stated range of degrees on homotopy orbits and the projection induces
a homology epimorphism on the homotopy orbits.
\end{proof}

\subsection{Symmetric groups and group completion}\label{groupcomthm}

We now discuss the map $\groupcompletion$ occurring in diagram
\eqref{main-diagram}. It is a special case of a general construction,
called ``group completion''.  Let $X$ be a connected space. There is
an $m$-fold covering
\[
E(\Sigma_{m-1}\times 1) \times_{\Sigma_{m-1}\times1} X^m \to E\Sigma_m
\times_{\Sigma_m} X^m
\]
induced by the index $m$ inclusion $\Sigma_{m-1} \times 1
\hookrightarrow \Sigma_m$.  The Becker-Gottlieb transfer is a map of
spectra
\[
\trf: \suspinf (E\Sigma_m \times_{\Sigma_m} X^m)_+ \to
\suspinf(E(\Sigma_{m-1}\times 1) \times_{\Sigma_{m-1} \times 1} X^m)_+
\]
which can be described (when $X$ is a manifold or local quotient
stack) as the Pontrjagin-Thom construction for the covering
projection.  The adjoint of the transfer is $E\Sigma_m
\times_{\Sigma_m} X^m \to \Q_{(m)}(E(\Sigma_{m-1}\times 1)
\times_{\Sigma_{m-1}\times 1} X^m)_+$, where $\Q_{(m)}$ denotes the
$m^{th}$ component.  The group completion map is then the composition
\[
\groupcompletion_m: E\Sigma_m \times_{\Sigma_m} X^m \stackrel{\trf}{\to}
\Q (E(\Sigma_{m-1}\times 1) \times_{\Sigma_{m-1}\times 1} X^m)_+ \to
\Q_{(m)}X_+,
\]
where the second map is induced by projecting onto the $m^{th}$
component of $X^m$.

The name stems from the following.  One can put a monoid structure on
the union
\[
\coprod_m E\Sigma_m \times_{\Sigma_m} X^m,
\]
and the maps $\{\groupcompletion_m\}$ assemble to a monoid map
$\groupcompletion: \coprod_m E\Sigma_m \times_{\Sigma_m} X^m \to \Q
X_+$.  The Barratt-Priddy-Quillen-Segal Theorem (see e.g.
\cite{Segcat}) asserts that this map is the homotopy-theoretic group
completion of the above monoid.

\begin{lemma}\label{homologicalstability}
  For any connected space $X$, the map $\groupcompletion_m: E \Sigma_m
  \times_{\Sigma_m} X^m \to \Q_{(m)} X_+$ induces a isomorphisms in
  homology with field coefficients\footnote{If the homology of $X$ is
    of finite type, then the coefficients can be arbitrary.} in
  degrees $* \leq (m-1)/2$.
\end{lemma}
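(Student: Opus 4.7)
The plan is to combine the McDuff--Segal group completion theorem with Nakaoka's homological stability for symmetric groups, applied to the fibre sequence $X^m \to (X^m)_{h\Sigma_m} \to B\Sigma_m$.

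Step 1: \emph{Reduction to stability.} The disjoint union $M(X):=\coprod_{m\geq 0} E\Sigma_m \times_{\Sigma_m} X^m$ is a topological monoid under the product induced by the $E_\infty$-operad structure on $\coprod E\Sigma_m$, and the Barratt--Priddy--Quillen--Segal theorem identifies $\Omega BM(X) \simeq \Q X_+$. The map $\groupcompletion_m$ is the restriction of the canonical map $M(X) \to \Omega BM(X)$ to the $m^{\text{th}}$ component. Since $X$ is connected, right-multiplication by any point $x_0 \in X$ gives a well-defined (up to homotopy) stabilization map $s: E\Sigma_m \times_{\Sigma_m} X^m \to E\Sigma_{m+1}\times_{\Sigma_{m+1}} X^{m+1}$. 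By the McDuff--Segal group completion theorem, the induced map
\[
\colim_k \bigl(E\Sigma_{m+k}\times_{\Sigma_{m+k}} X^{m+k}\bigr) \longrightarrow \Q_{(m)} X_+
\]
is an isomorphism on homology with any coefficients. Hence it suffices to show that each stabilization map $s$ induces an isomorphism on $H_i(-,\bF)$ for $i \leq (m-1)/2$.

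Step 2: \emph{Serre spectral sequence comparison.} I would compare the Leray--Serre spectral sequences for the fibrations $X^m \to (X^m)_{h\Sigma_m} \to B\Sigma_m$ and $X^{m+1} \to (X^{m+1})_{h\Sigma_{m+1}} \to B\Sigma_{m+1}$ along $s$. With field coefficients the Künneth theorem gives $H_q(X^m;\bF) \cong \bigl(H_*(X;\bF)^{\otimes m}\bigr)_q$ as a graded $\Sigma_m$-representation, which decomposes as a direct sum of permutation representations of the form $\Ind_{\Sigma_{k_1}\times\cdots\times \Sigma_{k_r}}^{\Sigma_m} \bF$ indexed by multisets of basis elements of $H_*(X;\bF)$. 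Shapiro's lemma identifies
\[
H_p(\Sigma_m;\Ind_{\Sigma_{k_1}\times\cdots\times \Sigma_{k_r}}^{\Sigma_m} \bF) \cong H_p(\Sigma_{k_1}\times\cdots\times \Sigma_{k_r};\bF).
\]
The effect of $s$ on such a summand is to add one copy of the basepoint component (the class $1 \in H_0(X;\bF)$), i.e.\ to increase the multiplicity $k_0$ of $1$ by one and leave the other $k_i$ fixed.

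Step 3: \emph{Nakaoka stability.} I would then invoke Nakaoka's theorem: $H_p(B\Sigma_n) \to H_p(B\Sigma_{n+1})$ is an iso for $p \leq (n-1)/2$. Combined with a Künneth argument for $\Sigma_{k_0}\times \Sigma_{k_1}\times\cdots$, this shows that the stabilization on the $E^2$-page is an iso in bidegrees $(p,q)$ with $p+q \leq (m-1)/2$, since on a summand indexed by $(k_0,k_1,\ldots)$ the homological degree $q$ coming from the $X^m$-factor satisfies $q \geq k_1 + 2k_2 + \cdots$, while the relevant stability range for the remaining $B\Sigma_{k_0}$-factor is $p \leq (k_0-1)/2$, and $k_0 + (k_1+2k_2+\cdots) \geq m - (\text{lower order})$ forces $p+q \leq (m-1)/2$ in the tightest case where all homology is concentrated in degree zero. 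The spectral sequence comparison theorem then upgrades this to an isomorphism of abutments in total degree $\leq (m-1)/2$, completing the proof.

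The main obstacle is the bookkeeping in Step 3: one must check carefully that the combined constraints on $p$ (from $B\Sigma_{k_0}$ stability) and on $q$ (from the positive-degree classes of $X$) conspire to give the uniform range $p+q \leq (m-1)/2$ claimed in the lemma, rather than some weaker bound that depends on $X$. One clean way to circumvent this bookkeeping is to invoke instead a ready-made homological stability result for Borel constructions $(X^m)_{h\Sigma_m}$ (or equivalently, for labelled configuration spaces in $\bR^\infty$, via Segal's model $C(\bR^\infty, X) \simeq \Q X_+$); such results, going back to McDuff and Segal, give precisely the stated range.
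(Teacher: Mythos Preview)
Your Step 1 is exactly the paper's argument: reduce to showing that the stabilization maps $j_m: E\Sigma_m\times_{\Sigma_m}X^m \to E\Sigma_{m+1}\times_{\Sigma_{m+1}}X^{m+1}$ are homology isomorphisms in a range, then invoke the group completion theorem to identify the colimit with $\Q X_+$. At that point the paper simply cites the required stability statement (to Hanbury--Betley and to Hatcher--Wahl combined with Kan--Thurston), remarking that it is surprisingly hard to locate a published proof. Your Steps 2--3 are an attempt to supply such a proof directly, so the difference between your proposal and the paper is only that you try to open the black box.

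Your sketch is essentially correct, with one small inaccuracy: the $\Sigma_m$-module $H_*(X;\bF)^{\otimes m}$ is not in general a sum of permutation modules, because odd-degree basis elements introduce sign twists. However this does not damage the argument. After Shapiro's lemma and K\"unneth, the $E^2$-term on a summand with multiplicities $(k_0,k_1,\dots)$ is a tensor product $H_{*}(\Sigma_{k_0};\bF)\otimes H_{*}(\Sigma_{k_1};\pm)\otimes\cdots$, and the stabilization map $j_m$ only alters the first factor $H_*(\Sigma_{k_0};\bF)\to H_*(\Sigma_{k_0+1};\bF)$, which carries trivial coefficients. Since the fibre degree satisfies $q\geq \sum_{j\geq 1}k_j = m-k_0$, one has $k_0\geq m-q$, and Nakaoka's range $p_0\leq k_0/2$ then covers all $p+q$ in the claimed range; this is the bookkeeping you were worried about, and it does work out. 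So your proposal is correct, and in fact fills in a detail the paper leaves to the literature.
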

\begin{proof}
  This is a well-known consequence of homology stability for symmetric
  groups (with twisted coefficients).  After choosing a basepoint in
  $X$ one has stabilization maps
  \[
  j_m: E \Sigma_{m} \times_{\Sigma_{m}} X^{m} \to E \Sigma_{m+1}
  \times_{\Sigma_{m+1}} X^{m+1}
  \]
  whose colimit is denoted by $E \Sigma_{\infty}
  \times_{\Sigma_{\infty}} X^{\infty}$. The induced map
  \[
 \groupcompletion_\infty: E \Sigma_{\infty} \times_{\Sigma_{\infty}}
X^{\infty} \to  \Q X_+
\]
is a homology isomorphism onto the basepoint component by the group
completion theorem (see e.g. \cite{McDuff-Segal}, \cite{Segconf}). Up
to a shift of component the stabilization map from $E\Sigma_m
\times_{\Sigma_m} X^m$ to the colimit followed by
$\groupcompletion_\infty$ agrees with $\groupcompletion_m$. The
stabilization map $j_m$ induces isomorphism in homology in degrees $*
\leq m/2 -1$, and epimorphism when $*\leq m/2$.\footnote{This has
probably been known for a long time. One proof can be found in
\cite{Hanb}, based on a result of \cite{Bet}.  Another proof is a
combination of Proposition 1.6 in \cite{HatWah} with the main result
of \cite{KanThu}.  The authors are not aware of a previously published
proof.}
\end{proof}
When $X=BG_\alpha$ and $m=m_\alpha$, the map $\groupcompletion_m$ of
this lemma is precisely the map $\groupcompletion_\alpha$ occuring in
Lemma \ref{main-diagram}, so the map
\[
\prod \groupcompletion_\alpha: \prod_{\alpha\in A} B(G_\alpha \wr
\Sigma_{m_\alpha}) \to \Q_{(m_\alpha)}(BG_\alpha)_+
\]
is a homology epimorphism in degrees $*\leq \operatorname{min}
\{m_\alpha/2 \:\:|\:\: \alpha \in A\}$.

\subsection{Homology of the Thom spectra and their infinite loop
spaces}\label{homthomspace}

The third arrow, $\prod \Q \inc: \prod_{\alpha \in A}\Q_{(m_\alpha)}
(BG_{\alpha})_+ \to \prod_{\alpha \in A}\Q (BG_{\alpha}^V)$, in the
counterclockwise composition in the diagram \eqref{main-diagram} is induced by
the inclusion of the zero section of the vector bundle $V$ over
$BG_\alpha$.  We show here that it is surjective in homology with
field coefficients.  This is the only part of the proof of Theorem
\ref{mainthm} where field coefficients are used seriously.
The homology surjectivity is immediate from the following two lemmata.

\begin{lemma}\label{thomiso}
The inclusions of the zero sections
\begin{align*}
BT(2) \hookrightarrow BT(2)^V, \\
BN(2) \hookrightarrow BN(2)^V, \\
BU(1) \hookrightarrow BU(1)^V
\end{align*}
induce surjections in homology with field coefficients.
\end{lemma}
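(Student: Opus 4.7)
The plan is to apply the Thom isomorphism in cohomology to reduce each assertion to a non-zero-divisor statement about the Euler class. For a complex line bundle $V\to X$, cup product with the Thom class provides an isomorphism $\Phi\colon H^{*-2}(X;\bF)\xrightarrow{\cong}\tilde H^{*}(X^V;\bF)$ under which the pullback $s^*\colon \tilde H^*(X^V;\bF)\to H^*(X;\bF)$ along the zero section $s\colon X\hookrightarrow X^V$ becomes cup product with $e(V)\in H^2(X;\bF)$. Since $\bF$ is a field, passing to $\bF$-linear duals shows that $s_*$ is surjective on homology if and only if $s^*$ is injective on cohomology, if and only if $e(V)$ is a non-zero-divisor in the graded ring $H^*(X;\bF)$. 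So the lemma reduces to this algebraic statement for each of the three classifying spaces.

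The first two cases then follow immediately from the polynomial structure of the relevant cohomology rings. For $X=BU(1)$ one has $H^*(BU(1);\bF)=\bF[x]$ and $e(V)=x$, a polynomial generator. For $X=BT(2)$ one has $H^*(BT(2);\bF)=\bF[x_1,x_2]$ and $e(V)=x_1+x_2$, a nonzero linear form in a polynomial ring over a field, hence a non-zero-divisor.

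The substantive case is $X=BN(2)$, which I would attack via the Serre spectral sequence for the fibration $BT(2)\to BN(2)\to B\bZ/2$ coming from the group extension $1\to T(2)\to N(2)\to\bZ/2\to 1$, with $\bZ/2$ acting on $H^*(BT(2);\bF)$ by swapping the two factors. In any characteristic the class $e(V)\in H^2(BN(2);\bF)$ restricts on the fiber $BT(2)$ to the swap-invariant class $x_1+x_2$, which is itself a non-zero-divisor. When $\operatorname{char}\bF\neq 2$ the spectral sequence collapses onto its bottom row, giving $H^*(BN(2);\bF)\cong H^*(BT(2);\bF)^{\bZ/2}=\bF[e_1,e_2]$ with $e(V)=e_1$, and the polynomial argument applies verbatim. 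The hard part will be the characteristic-$2$ case: then $H^*(BN(2);\bF_2)$ contains additional classes pulled back from $H^*(B\bZ/2;\bF_2)$, and one must combine the multiplicative structure of the $E_\infty$-page with restrictions to $BT(2)$ and to the elementary abelian $2$-subgroups of $N(2)$ in order to rule out any hidden annihilator of $e(V)$; alternatively one can analyze the Gysin sequence of the unit circle bundle $S(V)\to BN(2)$ directly, using that surjectivity of $s_*$ on $H_k(-;\bF)$ is equivalent to injectivity of $H_{k-1}(S(V);\bF)\to H_{k-1}(BN(2);\bF)$.
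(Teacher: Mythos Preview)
Your reduction to the non-zero-divisor property of $e(V)$ and your treatment of $BU(1)$, $BT(2)$, and $BN(2)$ in characteristic $\neq 2$ are correct and essentially identical to what the paper does.

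However, the characteristic-$2$ case of $BN(2)$ is not actually proved in your proposal: you only list two possible strategies (restriction to elementary abelian subgroups together with the $E_\infty$-page structure, or a Gysin-sequence argument) without carrying either one out. The fibration $BT(2)\to BN(2)\to B\bZ/2$ you chose has twisted coefficients and an infinite-dimensional base and fiber in mod-$2$ cohomology, so ruling out hidden annihilators of $e(V)$ via that spectral sequence is genuinely delicate and not something one can wave through.

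The paper sidesteps this difficulty by using a different fibration, namely
\[
\bR\bP^2 \longrightarrow BN(2) \stackrel{p}{\longrightarrow} BU(2),
\]
coming from $U(2)/N(2)\cong\bR\bP^2$. Here the base is simply connected, the fiber has $H^*(\bR\bP^2;\bF_2)=\bF_2[w]/(w^3)$, and since $\pi_1 BN(2)\cong\bZ/2$ forces $H^1(BN(2);\bF_2)\cong\bF_2$, the spectral sequence collapses at $E_2$. This yields
\[
H^*(BN(2);\bF_2)\cong \bF_2[w,y_1,y_2]/(w^3),
\]
with $y_i=p^*c_i$. Identifying $V$ as the tensor product of the determinant line (pulled back from $BU(2)$) with the sign line (pulled back from $B\bZ/2$) gives $e(V)=c_1(V)=y_1+w^2$, which is visibly a non-zero-divisor in $\bF_2[w,y_1,y_2]/(w^3)$. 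This is both shorter and more transparent than the route you outlined; if you want to keep your fibration, you would need to actually compute the ring $H^*(BN(2);\bF_2)$ from it and then locate $e(V)$ explicitly.
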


\begin{lemma}\label{dyerlashof}
  If $f: X \to Y$ is a pointed map between pointed spaces which is
  surjective in homology with coefficients in a field $\bF$, then the
  induced map $\Q f: \Q X \to \Q Y$ is surjective on homology with
  coefficients in $\bF$.
\end{lemma}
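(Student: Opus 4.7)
The plan is to invoke the classical structure theorem for the (co)homo\-lo\-gy of infinite loop spaces of the form $\Q X = \loopinf \suspinf X$ with coefficients in a field $\bF$. By the work of Dyer-Lashof, Kudo-Araki, and May (referenced in section \ref{infinite-loop-space-homology} of this paper), the reduced homology $\widetilde{H}_*(\Q X; \bF)$ is a free algebraic object functorially constructed from $\widetilde{H}_*(X;\bF)$: in characteristic $p>0$ it is the free graded-commutative algebra on the free allowable module over the Dyer-Lashof algebra generated by $\widetilde{H}_*(X;\bF)$, while in characteristic $0$ it is simply the free graded-commutative algebra on $\widetilde{H}_*(X;\bQ)$.

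First I would write down the generating map: the unit $\eta_X: X \to \Q X$ of the $(\suspinf, \loopinf)$-adjunction induces a map $(\eta_X)_* : \widetilde{H}_*(X; \bF) \to \widetilde{H}_*(\Q X; \bF)$, and by the structure theorem the image of this map, together with its closure under the Pontrjagin product (coming from the infinite loop structure) and the Dyer-Lashof operations, generates all of $\widetilde{H}_*(\Q X; \bF)$. Next I would observe that $\Q f = \loopinf \suspinf f$ is a map of infinite loop spaces, and hence $(\Q f)_*$ is a homomorphism of algebras over the Dyer-Lashof algebra that commutes with the units: the square formed by $\eta_X$, $\eta_Y$, $f_*$ and $(\Q f)_*$ commutes by naturality of $\eta$.

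From here the argument is formal. The image of $(\Q f)_*$ is a subalgebra of $\widetilde{H}_*(\Q Y; \bF)$ closed under all Dyer-Lashof operations. By the hypothesis that $f_*: \widetilde{H}_*(X;\bF) \to \widetilde{H}_*(Y;\bF)$ is surjective, together with the commuting square, this image contains $(\eta_Y)_* \widetilde{H}_*(Y;\bF)$, i.e.\ a full set of generators for $\widetilde{H}_*(\Q Y;\bF)$ as an algebra over the Dyer-Lashof algebra. Therefore the image coincides with $\widetilde{H}_*(\Q Y;\bF)$, so $(\Q f)_*$ is surjective. Extending from reduced to unreduced homology is immediate since $\Q f$ preserves basepoints and the basepoint components contribute a $\bF$ summand that is hit tautologically.

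The only real point requiring care — and the step that could be called the main obstacle — is citing the correct version of the structure theorem for $H_*(\Q X; \bF)$: one needs the statement as a \emph{free} functor of $\widetilde{H}_*(X;\bF)$, so that naturality in $X$ automatically delivers surjectivity of $(\Q f)_*$ from surjectivity of $f_*$. Once this structure theorem is in hand the conclusion is formal, so the proof should be short.
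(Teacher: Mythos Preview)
Your proposal is correct and matches the paper's own argument: the paper simply cites the structure theorem $H_*(\Q X;\bF)\cong \Lambda(\DL_{\bF}(\widetilde{H}_*(X;\bF)))$ (or $\Lambda(\widetilde{H}_*(X;\bF))$ in characteristic zero) from \cite{May} and observes that this calculation immediately implies the lemma, which is exactly the free-functor argument you spell out.
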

Lemma \ref{dyerlashof} is a well-known fact which we discuss in
section \ref{infinite-loop-space-homology}.

\begin{proof}[Proof of Lemma \ref{thomiso}]
Let $\iota$ denote one of the three above inclusions.  We shall
prove the equivalent statement that $\iota^*$ is injective on
cohomology.  The composition of the Thom isomorphism followed by
$\iota^*$ is equal to multiplication by the Euler class $e(V)$ of
$V$, so it suffices to show that $e(V)$ is not a zero-divisor in
each of the three cases. For $BT(2)$, $BU(1)$ and $BN(2); \car(\bF)
\neq 2$, the computation is easy and well-known.

For $BN(2)$ and $\car(\bF)=2$, we argue as follows. The homogeneous space
$U(2) / N(2)$ is diffeomorphic to $\bR \bP^2$, so there is a fibration
\[
\bR \bP^2 \to BN(2) \stackrel{p}{\to} BU(2),
\]
which is simple because $BU(2)$ is simply-connected. Put $y_i := p^{*}
c_i \in H^{*}(BN(2);\bF)$.  Consider the Leray-Serre spectral sequence
for the fibration.  The cohomology of the real projective plane is
\[
H^*(\bR \bP^2; \bF) \cong \bF [w] / (w^3),
\]
where $w \in H^1 (\bR \bP^2; \bF)$.  One has $H^1(BN(2); \bF_2) \cong
\bF_2$, since $\pi_1 BN(2) \cong \bZ/2$, and hence the spectral
sequence collapses. Thus for $\car \bF =2$,
\[
H^*(BN(2); \bF) \cong \bF [w,y_1, y_2] / (w^3).
\]
The line bundle $V \to BN(2)$ is the tensor product of the determinant
line bundle $BN(2) \to BU(2) \stackrel{B\det}{\to} BS^1$ with the
signum line bundle $BN(2) \to B \bZ/2 \stackrel{B\inc}{\to} BS^1$ and
thus $c_1 (V) = y_1 + w^2$, which is not a zero
divisor in $\bF[w,y_1,y_2]/(w^3)$.
\end{proof}

\subsection{Homotopy commutativity of diagram
  \eqref{main-diagram}}\label{maindiagsect}

We first establish a general fact about Pontrjagin-Thom maps. We say
that two maps are \emph{weakly homotopic} if their restrictions to any
compact subset of the domain are homotopic.  Weakly homotopic maps
induce identical homomorphisms on homotopy groups and in any
generalized homology theory.

\begin{lemma}\label{transfer-lemma}
  Let $D \overset{f}{\looparrowright} M \overset{g}{\leftarrow} Z$ be
  a diagram of smooth manifolds such that
\begin{enumerate}
\item $f$ is a proper immersion,
\item $\mathrm{Im}(g) \subset \mathrm{Im}(f)$,
\item the projection $q: D\times_M Z \to Z$ is a finite sheeted covering.
\end{enumerate}
Let $h$ be the projection $D \times_M Z \to M$ and $\inc:D \to
D^{\nu(f)}$ the zero section. Then the diagram of spaces
\begin{equation}\label{transfer-pt-spectra-diagram}
\xymatrix{ Z \ar[r]^-{ g} \ar[d]^-{\trf_{q}}&
 M \ar[r]^-{\PT_f} & \Q D^{\nu(f)} \\
\Q (D\times_M Z)_+ \ar[rr]^-{\Q h}  & & \Q D_+
\ar[u]^-{\Q \inc}\\}
\end{equation}
commutes up to homotopy.  If $D$, $M$ and $Z$ are differentiable
stacks then the same statement is true except that the diagram is only
weakly homotopy commutative.
\end{lemma}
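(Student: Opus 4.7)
The plan is to reduce the statement to a classical Thom-collapse calculation and then transplant the argument into the framework for stacks developed above. In the classical setting (smooth manifolds), pick $n$ large and an embedding $j: D\hookrightarrow \bR^n\times M$ over $M$, together with a tubular neighborhood $\phi:\nu(j)\hookrightarrow \bR^n\times M$. Since $\nu(j)\cong \bR^n\oplus \nu(f)$ over $D$, the Thom collapse
\[
c:(\bR^n\times M)_+\longrightarrow D^{\nu(j)}=\Sigma^n D^{\nu(f)}
\]
has adjoint $\PT_f$. I then form the pulled-back embedding
\[
\tilde j:D\times_M Z\hookrightarrow \bR^n\times Z,\qquad (d,z)\longmapsto (\mathrm{pr}_{\bR^n}(j(d)),z),
\]
fitting into a commutative square together with $\id\times g$ and $h$.

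The geometric heart of the argument is to choose a tubular neighborhood $\tilde\phi$ of $\tilde j$ contained in $(\id\times g)^{-1}(\phi(\nu(j)))$ and such that $\id\times g$ restricted to $\tilde\phi(\nu(\tilde j))$ realizes the canonical bundle map $\nu(\tilde j)\hookrightarrow h^{*}\nu(j)$. Because the hypothesis ``$q=\mathrm{pr}_Z\circ \tilde j$ is a finite-sheeted covering'' forces $q$ to be a local diffeomorphism, a direct computation gives $\nu(\tilde j)\cong \bR^n$ canonically (the first summand of $h^{*}\nu(j)=\bR^n\oplus h^{*}\nu(f)$), and locally over $z\in Z$ the preimage $(\id\times g)^{-1}(\phi(\nu(j)))$ splits as a finite disjoint union of normal tubes, one for each sheet of $f$ through $g(z)$; compatible tubular neighborhoods therefore exist. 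With this compatibility in hand, naturality of the Thom collapse yields the strictly commuting diagram of spectra
\[
\xymatrix{\Sigma^{n} Z_+ \ar[r]^-{\Sigma^n g_+}\ar[d]_-{\tilde c} & \Sigma^{n} M_+ \ar[d]^-{c}\\
\Sigma^{n}(D\times_M Z)_+ \ar[r]^-{\Sigma^n\inc\circ \Sigma^n h_+} & \Sigma^{n}D^{\nu(f)},}
\]
in which $\tilde c$ is, by definition of the Becker-Gottlieb transfer, the $n$-fold suspension of $\trf_q$, and $\inc:D_+\to D^{\nu(f)}$ is the zero-section inclusion. Taking adjoints and passing to the limit $n\to\infty$ gives the homotopy commutativity of \eqref{transfer-pt-spectra-diagram}.

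The extension to differentiable local quotient stacks mirrors the construction of $\PT_f$ in the proof of Theorem \ref{pontthom}. I will apply Proposition \ref{whitney} to obtain a fat embedding of $\fX$ into a universal vector bundle $\fH$ over $\fY$, pull it back via $g$ to a fat embedding over $Z$, and then carry out the collapse inside $\fH$. Since $\PT_f$ is constructed by covering $\Ho(\fY)$ by open sets on each of which the embedding lands in a finite-rank sub-bundle of $\fH$, the preceding classical argument applies to each such piece. The locally defined homotopies agree on overlaps, which produces the desired homotopy between the two compositions after restriction to any compact subset of $Z$ --- precisely what ``weakly homotopic'' asks for.

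The main obstacle is the compatibility of tubular neighborhoods in the second step. Naively, $g$ is \emph{not} transverse to $f$ when $\operatorname{codim}(f)>0$, since the transverse-pullback dimension formula $\dim(D\times_M Z)=\dim D+\dim Z-\dim M$ would contradict the assumption that $q$ is a covering. The finite-covering hypothesis is precisely the substitute for transversality that makes the argument work: it forces the pullback of a tubular neighborhood of $f(D)$ to decompose locally as a finite disjoint union of full normal tubes, one for each sheet of $f$, which is exactly what is needed for the Thom collapse to factor as the transfer followed by the zero-section inclusion.
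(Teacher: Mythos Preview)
Your argument for the manifold case is correct and follows essentially the same path as the paper: embed $D$ in $M\times\bR^n$, pull back to an embedding of $D\times_M Z$ in $Z\times\bR^n$, identify the normal bundle of the latter with the trivial $\bR^n$-summand of $h^*\nu(j)$, and observe that the two Thom collapses then fit into a strictly commuting square whose adjoint is \eqref{transfer-pt-spectra-diagram}. Your explicit remark that $g$ is \emph{not} transverse to $f$ and that the finite-covering hypothesis is what replaces transversality is a nice clarification of why the preimage of a tubular neighborhood decomposes as claimed.

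For the stack case your route differs from the paper's. You propose to rerun the construction inside the universal-vector-bundle framework of Theorem~\ref{pontthom}, pulling back the fat embedding along $g$ and gluing locally defined homotopies. The paper instead argues more economically: since only weak homotopy commutativity is asserted, it suffices to check after precomposing with any map from a finite CW complex $K\to\Ho(Z)$, and such a map can be represented by a submersion $K'\to Z$ from a manifold $K'\simeq K$; one then invokes the manifold case with $K'$ in place of $Z$. Your approach is sound but heavier; the paper's reduction avoids having to discuss gluing of homotopies over an open cover. Either way the content is the same.
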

\begin{proof} Choose a map $j: D \to \bR^n$ such that $(f,j): D \to M\times \bR^n$
  is an embedding; it is proper because $f$ is proper.  The map
\begin{align*}
k: & D\times_M Z \hookrightarrow Z\times \bR^n \\
   & x \mapsto (q(x), j\circ  h(x))
\end{align*}
is therefore also a proper embedding.  Identifying $\nu((f,j))$ with
a tubular neighborhood of $D$ in $M\times \bR^n$, the inverse image
under $g\times \mathrm{id}$ of this neighborhood is a tubular
neighborhood of $D\times_M Z$ in $Z\times \bR^n$ which we identify
with the normal bundle $\nu(k)$.  There are canonical
identifications
\begin{align*}
  \nu(k) \cong (D\times_M Z) \times \bR^n, \\
  \nu((f,j)) \cong \nu(f) \oplus \bR^n,
\end{align*}
and the projection $q: D\times_M Z \to D$ identifies the trivial
factor of $\nu((f,j))$ with $\nu(k)$.  See Figure
\ref{transfer-pt-figure}.  Using these choices it is easy to check
that either composition in \eqref{transfer-pt-spectra-diagram} sends
$(z,x) \in Z\times \bR^n$ to $\infty$ if it is not in the tubular
neighborhood of $D\times_M Z$ in $Z\times \bR^n$; and if $(z,x)$
does lie in the tubular neighborhood of $D\times_M Z$, corresponding
to a point $((d,z),y) \in \nu(k)$, then it is sent to $((d,0),y) \in
\nu(f) \oplus \bR^n$. Letting $n \to \infty$ and taking the adjoint diagram completes the proof in
the case of manifolds. The statement for stacks follows from this,
because any homotopy class of maps $K \to \Ho(Z)$, $K$ a finite
CW complex, can be represented by a submersion $K \dash \to Z$
(where $K \dash$ is a manifold homotopy equivalent to $K$).
\begin{figure}\label{transfer-pt-figure}
\input{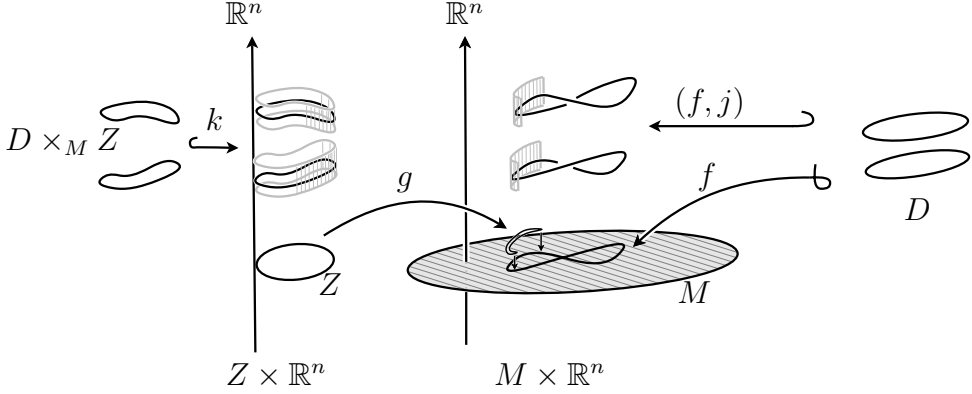}
\caption{Illustration of geometry of Lemma \ref{transfer-lemma}.  The
  normal bundle of $D$ canonically splits as $\nu(f) \oplus \bR^n$; the
  trivial factor and its pullback to $D\times_M Z$ are shown in grey.}
\end{figure}
\end{proof}

\begin{lemma}
The diagram \eqref{main-diagram} is weakly homotopy commutative.
\end{lemma}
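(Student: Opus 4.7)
The plan is to verify homotopy commutativity factor by factor. Since the target $\prod_{\alpha \in A} \Q BG_\alpha^V$ is a product and the counter-clockwise composition is itself a product indexed by $\alpha \in A$, it suffices to show for each $\alpha$ that the two compositions into the single factor $\Q BG_\alpha^V$ are weakly homotopic. I will fix such an $\alpha$ and assume $m_\alpha \geq 1$; otherwise $B(G_\alpha \wr \Sigma_0) = \ast$, the image of $\xi_{E(\Gamma)}$ misses the boundary component indexed by $\alpha$, and both compositions factor through the basepoint.

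First I would apply Lemma \ref{transfer-lemma} to the diagram $\M((\Gamma_\alpha)) \xrightarrow{\xi_\alpha} \mnbar{g}{n} \xleftarrow{\xi_{E(\Gamma)}} \fM((\Gamma))$. The map $\xi_\alpha$ is a proper immersion by Section \ref{boundary-section}, and $\mathrm{Im}(\xi_{E(\Gamma)}) \subset \mathrm{Im}(\xi_\alpha)$ because $\Gamma$ contains a type-$\alpha$ edge. The stacky pullback $\M((\Gamma_\alpha)) \times_{\mnbar{g}{n}} \fM((\Gamma))$ parametrises pairs (curve of combinatorial type $\Gamma$, choice of a type-$\alpha$ node); equivalently, it is $\fM(\Gamma) \hq \mathrm{Stab}_{\Aut(\Gamma)}(e_\alpha)$ for a fixed type-$\alpha$ edge $e_\alpha$ of $\Gamma$. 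Since $\Aut(\Gamma)$ acts transitively on the $m_\alpha$-element set $E_\alpha$ of type-$\alpha$ edges, the projection $q$ to $\fM((\Gamma))$ is a representable $m_\alpha$-sheeted cover. Lemma \ref{transfer-lemma}, followed by post-composition with the Thomification of the classifying map $L_\alpha : \M((\Gamma_\alpha)) \to BG_\alpha$ of the $G_\alpha$-reduction of $\nu(\alpha)$, will then show the $\alpha$-factor of the clockwise composition is weakly homotopic to
\[
\fM((\Gamma)) \xrightarrow{\trf_q} \Q\bigl(\M((\Gamma_\alpha)) \times_{\mnbar{g}{n}} \fM((\Gamma))\bigr)_+ \xrightarrow{\Q(L_\alpha \circ h)} \Q(BG_\alpha)_+ \xrightarrow{\Q \inc} \Q BG_\alpha^V.
\]

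Next, to match this with the counter-clockwise composition, I would check that $q$ is the pullback of the universal $m_\alpha$-sheeted cover along the $\alpha$-component of $L_{H_0}$. Writing $\Aut(\Gamma) = \prod_\beta \Aut(\Gamma_\beta) \wr \Sigma_{m_\beta}$, the stabiliser $\mathrm{Stab}_{\Aut(\Gamma)}(e_\alpha)$ is the product over $\beta \neq \alpha$ with the index-$m_\alpha$ subgroup $\Aut(\Gamma_\alpha) \times (\Aut(\Gamma_\alpha) \wr \Sigma_{m_\alpha - 1})$ in the $\alpha$-factor. Under $L_{H_0}$ this matches the standard index-$m_\alpha$ subgroup whose classifying space is the total space of $E(\Sigma_{m_\alpha - 1} \times 1) \times_{\Sigma_{m_\alpha - 1} \times 1} (BG_\alpha)^{m_\alpha} \to B(G_\alpha \wr \Sigma_{m_\alpha})$, so by naturality of Pontrjagin-Thom transfers $\trf_q \simeq L_{H_0,\alpha}^{*}\trf$. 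Finally, from $\nu(\alpha) \cong L_{h_1} \otimes L_{h_2}$ with its canonical $G_\alpha$-reduction (Section \ref{boundary-section}), the classifying map $L_\alpha \circ h$ is the restriction of $L_{H_0}$ to the half-edges of $e_\alpha$, which under the above identification is precisely projection onto the $m_\alpha$-th $BG_\alpha$-factor --- i.e.\ the second map in the definition of $\groupcompletion_\alpha$. Assembling these identifications yields the required weak homotopy.

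The hard part will be the stacky/combinatorial bookkeeping in the third paragraph: one must verify that the stacky fibre product really has the claimed quotient presentation (rather than a derived variant), and that the $G_\alpha$-structure on the pulled-back normal bundle is compatible with the ``projection to $BG_\alpha$'' appearing in the group completion map. Transversality is mild here because $\xi_{E(\Gamma)}$ factors through $\mathrm{Im}(\xi_\alpha)$ as the inclusion of a deeper stratum, so the pullback on points is straightforward; the real subtlety lies in tracking automorphism groups correctly.
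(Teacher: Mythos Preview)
Your proposal is correct and follows essentially the same approach as the paper: reduce to a single factor $\alpha$, apply Lemma~\ref{transfer-lemma} to $\M((\Gamma_\alpha)) \looparrowright \mnbar{g}{n} \leftarrow \fM((\Gamma))$, identify the fibre product as $\fM(\Gamma)\hq \mathrm{Stab}(e_\alpha) = \fM((\Gamma|e_\alpha))$, and then match the resulting transfer-plus-classifying-map composite with the $\alpha$-component of the group completion map via naturality of transfers along $L_{H_0}$. The paper packages the final matching step into a single commutative ladder whose horizontal arrows are classifying maps of the relevant bundles, whereas you spell out the stabiliser computation and the identification of $L_\alpha\circ h$ with the projection explicitly; these are the same argument.
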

\begin{proof}
 For each $\alpha \in A$ one sees that the morphisms
\[
\M((\Gamma_\alpha)) \looparrowright \mnbar{g}{n} \leftarrow
\fM((\Gamma))
\]
satisfy the hypotheses of Lemma \ref{transfer-lemma}.  One easily
verifies that
\[
\M((\Gamma_\alpha)) \times_{\mnbar{g}{n}} \fM((\Gamma)) \cong \fM((\Gamma | e)),
\]
where $e$ is an edge of the type specified by $\alpha$.  We denote the
projection onto the first factor by $p$.  Now consider the following
diagram:
\[
\xymatrix{
   B(G_\alpha \wr \Sigma_{m_\alpha})   \ar[d]^-{\trf}  
 & \fM((\Gamma)) \ar[l] \ar@/^1pc/[dr]^-{\xi_{E(\Gamma)}} \ar[d]_-{\trf} \\ 
   \Q B(G_\alpha \wr (\Sigma_{m_\alpha - 1} \times 1))_+ \ar[d]^-{\Q \proj} 
 & \Q \fM((\Gamma | e))_+ \ar[l] \ar[d]_-{\Q p} 
 & \mnbar{g}{n} \ar@/^1pc/[ddl]^-{\PT_{\xi_\alpha}} \\
   \Q (BG_\alpha)_+ \ar[d]^-{\Q \inc} 
 & \Q \M((\Gamma_\alpha))_+ \ar[d]_-{\Q \inc} \ar[l] \\ 
   \Q_m BG_\alpha^{V}  
 & \Q \M((\Gamma_\alpha))^{\nu(\alpha)}  \ar[l]}
\]
The right triangle is homotopy commutative by Lemma
\ref{transfer-lemma}.  The horizontal maps are all (induced by)
classifying maps of suitable vector bundles.  It is clear that the
left part of the diagram is also commutative and that the composition
along the left side is precisely the $\alpha$ component of the
counterclockwise composition in Lemma \ref{main-diagram}.
\end{proof}

\subsection{A quick review of homology of infinite loop
spaces}\label{infinite-loop-space-homology}

We recall the description of the homology of the free infinite loop
space $\Q X$ with coefficients in a field $\bF$.  In characteristic zero
the description is easy and classical; in finite characteristic the
standard reference is \cite{May}.

If $\bF$ is a field, $V$ a graded $\bF$-vector space, then we denote
by $\Lambda (V)$ the free graded-commutative $\bF$-algebra generated
by $V$.

Let $X$ be a pointed space.  There is a natural map $X \to \Q(X)$,
adjoint to the identity on $\suspinf X$ and thus a homomorphism
$H_*(X) \to H_*(\Q X)$.  Because $\Q X$ is a homotopy-commutative
$H$-space, the Pontrjagin product defines the structure of a
graded-commutative $\bF$-algebra on the homology $H_*(\Q X; \bF)$.
Thus we obtain a ring homomorphism $\Lambda (\widetilde{H}_*(X; \bF))
\to H_*(\Q X; \bF)$. If $\car(\bF)=0$ then this is an isomorphism.  This
is a standard result of algebraic topology, see \cite{MilMoo}, p. 262 f.

If $\car \bF = p > 0$ then the homology $H_*(\Q X; \bF)$ is much
richer.  The homology algebra $H_*(\Q X;\bF)$ is a module over an
algebra of homology operations known as the \emph{Dyer-Lashof
  operations} (they are also known as Araki-Kudo operations if $p=2$).
These operations measure the failure of chain-level commutativity of
the Pontrjagin product.

For $p\neq 2$, these operations are:
\[
\beta^\epsilon Q^s: H_n(\Q X;\bF) \to H_{n+2s(p-1) -\epsilon}(\Q
X;\bF)
\]
for $\epsilon \in \{0,1\}$ and $s\in \bZ_{\geq \epsilon}$.  Given a
sequence $I=(\epsilon_1, s_1, \ldots, \epsilon_n, s_n)$, $I$ is
\emph{admissible} if $s_{i+1} \leq p s_i - \epsilon_i$ for $i=1,
\ldots, n-1$.
One defines the \emph{excess}
\[
e(I) = 2s_1 -\epsilon_1 - \sum_{i=2}^n (2s_i(p-1)-\epsilon_i)
\]
and $b(I)=\epsilon_1$.  Such a sequence determines an iteration of
operations which is written $Q^I$.

When $p=2$ the operations are of the form
\[
Q^s: H_n(\Q X;\bF) \to H_{n+s}(\Q X;\bF)
\]
for $s \in \bZ_{\geq 0}$.  A sequence $I=(s_1, \ldots, s_n)$ is
\emph{admissible} if $s_{i+1}\leq 2s_i$ for each $i=1, \ldots, n-1$.
The excess is defined to be $e(I) = s_1 - \sum_{i=2}^n s_i$, and for
convenience one puts $b(I)=0$.

Let $V$ be a graded $\bF$-vector space and let $B$ be a homogeneous
basis of $V$. The \emph{free unstable Dyer-Lashof module generated
by $V$} is the $\bF$-vector space $\DL_{\bF} (V) $ on the basis
\[
\{Q^I x \:\: | \:\: x\in B, \mbox{ $I$ admissible}, e(I) + b(I) \geq
\mathrm{deg}(x) \}.
\]
Because $H_*(\Q X; \bF)$ has Dyer-Lashof operations, there is a ring
homomorphism, compatible with the Dyer-Lashof operations
\[\Lambda( \DL_{\bF} (\widetilde{H}_* (X; \bF))) \to H_*(\Q X; \bF),\]
and it is proven in \cite{May} that this is an isomorphism. This
calculation immediately implies Lemma \ref{dyerlashof}.

\section{Comparison to the tautological algebra}\label{comparisontautological}

Here we explain the relationship between the rational cohomology
classes detected via Theorem \ref{mainthm} and the tautological
algebra of $\mnbar{g}{n}$.

\begin{proposition}\label{tautological}
  The image of the homomorphism 
  \[
  \Phi_{irr}^{*}: H^{*} (\Q BN(2)^V; \bQ) \to H^{*} (\mnbar{g}{n};
  \bQ)
  \]
  is contained in the cohomology tautological algebra $\cR^*
  (\mnbar{g}{n})$. The analogous statement is true for the other maps
  studied in Theorem \ref{mainthmspecial} and Theorem \ref{mainthm}.
\end{proposition}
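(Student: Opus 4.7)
The approach is to factor through the algebra generators of $H^*(\Q BN(2)^V;\bQ)$ (and its analogues for the other maps) and identify each generator's pullback as a Gysin pushforward of a polynomial in $\psi$-classes along a gluing morphism. Since $\cR^*(\mnbar{g}{n})$ is closed under products, under pullbacks along gluing and forgetful morphisms, and contains all such Gysin pushforwards, this yields the result.

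The first step is a rational reduction. Since $\Q X$ is rationally a product of Eilenberg-MacLane spaces for any connected based CW complex $X$ of finite type, one has an isomorphism of graded-commutative $\bQ$-algebras
\[
H^*(\Q X;\bQ) \;\cong\; \Lambda\bigl(\widetilde{H}^*(X;\bQ)\bigr),
\]
the generators being canonically identified with the primitive classes in the Hopf algebra structure, i.e.\ with the image of a splitting of the unit pullback $\eta^*: H^*(\Q X;\bQ) \to H^*(X;\bQ)$. Applied with $X=BN(2)^V$ this recovers the generators $a_{i,j}$ mentioned in the introduction. For the cartesian product maps appearing in Theorem \ref{mainthm}, Künneth reduces the question to a single factor $\Q BG_\alpha^V$. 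Because $\Phi^*$ is a ring homomorphism it then suffices to check that each primitive generator maps into $\cR^*(\mnbar{g}{n})$.

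Next I would identify those images explicitly. For a proper representable codimension-$d$ immersion $f:\fD \looparrowright \fM$ of differentiable local quotient stacks with classifying map $c:\fD \to BG$ for a chosen $G$-reduction of $\nu(f)$, the Pontrjagin-Thom map factors as $\Phi = \Q\bar{c} \circ \PT_f$, where $\bar{c}:\fD^{\nu(f)} \to BG^V$ is the Thomified classifying map. The characteristic property of the PT construction together with the Thom isomorphism gives the umkehr formula: for a primitive class $\tilde{y}\in H^*(\Q BG^V;\bQ)$ corresponding to $y_0\in\widetilde{H}^*(BG^V;\bQ)$,
\[
\Phi^*(\tilde{y}) \;=\; f_!\bigl(c^*(\mathrm{Th}^{-1}(y_0))\bigr).
\]
Applied with $f=\xi_\alpha$, $\fD = \M((\Gamma_\alpha))$, and $G=G_\alpha\in\{U(1),T(2),N(2)\}$: in each case $H^*(BG_\alpha;\bQ)$ is a polynomial algebra in the first Chern classes of the universal line bundles (symmetric polynomials in two variables when $G_\alpha=N(2)$), so $c^*(\mathrm{Th}^{-1}(y_0))$ is a polynomial in the $\psi$-classes $\psi_{h_1},\psi_{h_2}$ attached to the half-edges of the distinguished edge of $\Gamma_\alpha$, and is thus tautological on $\M((\Gamma_\alpha))$. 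Pushforward along the gluing morphism $\xi_\alpha$ then yields a tautological class on $\mnbar{g}{n}$.

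The main obstacle is justifying the umkehr formula $\Phi^* = f_!\circ c^*\circ \mathrm{Th}^{-1}$ on primitives in the stack setting, rather than for manifolds where it is classical. I would dispose of this by reduction to the manifold case: the construction of $\PT_{\xi_\alpha}$ from Section \ref{ptconstructionsection} is characterized by its compatibility with pullback along transverse test maps $N\to \mnbar{g}{n}$ from manifolds, and Lemma \ref{transfer-lemma} already encodes exactly the compatibility needed to pull the classical cohomological PT identity back to the stack. Since singular cohomology of $\mnbar{g}{n}$ coincides with that of $\Ho(\mnbar{g}{n})$ and is detected on such test manifolds, the formula holds. Granted this, the three-step outline above proves the proposition.
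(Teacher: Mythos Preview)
Your proposal is correct and follows essentially the same route as the paper: reduce to the polynomial generators of $H^*(\Q BG_\alpha^V;\bQ)$, then identify $\Phi_\alpha^*$ on a generator with a Gysin pushforward along $\xi_\alpha$ of a polynomial in $\psi$-classes via the Thom isomorphism, which is tautological by definition. The paper carries this out concretely for $\Phi_{irr}$, writing $\Phi_{irr}^*(a_{i,j}) = (\xi_{irr})_!\bigl((\psi_{n+1}+\psi_{n+2})^i(\psi_{n+1}\psi_{n+2})^j\bigr)$.

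One remark: the ``main obstacle'' you flag is not actually an obstacle in the paper's setup. The umkehr map $f_!$ for a proper representable morphism of local quotient stacks is \emph{defined} in Section~\ref{comparisontautological} as the composition $\PT_f^*\circ\thom$, so the identity $\Phi^*(\tilde{y}) = f_!\bigl(c^*\thom^{-1}(y_0)\bigr)$ is immediate from the factorization $\Phi = \Q\bar{c}\circ\PT_f$ and needs no separate reduction to the manifold case. The paper does, however, note a small residual point you do not mention: the tautological algebra is defined using the gluing morphism from the ordered moduli space $\mnbar{g-1}{n+2}$, whereas $\Phi_{irr}$ is built from its $\bZ/2$-quotient $\mnbar{g-1}{n+(2)}$, so one must observe that the pushforwards differ only by a factor of $2$.
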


Before we can explain the definition of $\cR^*(\mnbar{g}{n})$ and the
proof of \ref{tautological}, we need to say a few words about umkehr
maps (also called ``pushforward'' or ``Gysin map'') in cohomology and
their relation to the Pontrjagin-Thom construction.

Let $f:M \to N$ be a proper smooth map between manifolds (or a proper
representable morphism between differentiable local quotient stacks)
of codimension $d$, and let $\PT_f:\suspinf N_+ \to \bTh (\nu(f))$ be
its Pontrjagin-Thom map. A \emph{cohomological orientation} of $f$ is
by definition a Thom class in $H^d (\bTh(\nu(f))$. This orientation
induces a \emph{Thom isomorphism} $\thom:H^* (M) \to H^{*+d}
(M^{\nu(f)})$ (see \cite{Rudyak}, ch. V for details). The \emph{umkehr
  map} $f_{!}$ is defined as the composition
\begin{align}\label{umkehr}
  H^{*}(M) \cong H^{*}(\suspinf M_+)
    \stackrel{\thom}{\longrightarrow} H^{*+d}(M^{\nu(f)}) \\
    \stackrel{\PT_{f}^{*}}{\longrightarrow}
  H^{*+d}(\suspinf N_+) \cong   H^{*+d}(N).
\end{align}

The tautological algebra has been studied by many authors; we refer to
the survey paper \cite{Vakil}. Here is the definition. One considers
all natural morphisms $\mnbar{g}{n+1} \to \mnbar{g}{n}$ (forget the
last point and collapse an unstable component if it shows up),
$\mnbar{g-1}{n+2} \to \mnbar{g}{n}$, $\mnbar{h}{k+1} \times
\mnbar{g-h}{n-k+1} \to \mnbar{g}{n}$ (the gluing morphisms) and $
\mnbar{g}{n} \to \mnbar{g}{n}$ (given by a permutation of the
labelling set $\{1, \ldots , n \}$).  All these morphisms are
representable morphisms of complex-analytic stacks and so they have
canonical orientations. Thus there are umkehr maps in integral
cohomology for these morphisms. There is another, more traditional way
to define the umkehr maps for complex orbifolds, based on rational
Poincar\'e duality for the coarse moduli spaces, but this only works
in rational cohomology.

\begin{definition}
  The collection of tautological algebras 
  \[
  \cR^* (\mnbar{g}{n}) \subset H^{2*}(\mnbar{g}{n}; \bQ)
  \]
  is the smallest system of unital $\bQ$-subalgebras which contain all
  classes $\psi_i = c_1 (L_i) \in H^2 (\mnbar{g}{n}; \bQ)$, for all
  $g$, $n$ and $i=1, \ldots ,n$ and which is closed under pushforward
  by the natural morphisms above.
\end{definition}

We prove Proposition \ref{tautological} only for the map $\Phi_{irr}:
\mnbar{g}{n} \to \Q BN(2)^V$, which is sufficient to clarify the
pattern.

First recall that $H^*(\Q BN(2)^V; \bQ) = \bQ [a_{i,j}]$, where
\[
a_{i,j} = \thom(y_{1}^{i}y_{2}^{j}) \in H^{2 + 2i + 4j} (BN(2)^V),\]
and $y_i$ is the $i$th Chern class of the $2$-dimensional complex
vector bundle on $BN(2)$ induced by the inclusion $N(2) \to U(2)$.
Thus we need to argue that $\Phi_{irr}^{*} (\thom (y_{1}^{i}
y_{2}^{j}))$ is in the tautological algebra. By the definition of
$\Phi_{irr}$, this is nothing else than $\PT_{\xi_{irr}}^{*} (\thom(
c_1(W)^{i} c_2(W)^{j}))$, where $W=L_{n+1} \oplus L_{n+2} \to
\mnbar{g-1}{n+(2)}$ is the sum of the natural line bundles (which is
well-defined, although the last two points are permuted). This can be
rewritten, using the definition of the umkehr map, as 
\[
(\xi_{irr})_{!} (c_1(W)^{i} c_2(W)^{j}) = (\xi_{irr})_{!}
((\psi_{n+1} + \psi_{n+2})^{i} (\psi_{n+1} \psi_{n+2})^{j}) .
\]
This obviously lies in the tautological ring. There is a little
argument needed, because we used the PT-map starting from
$\mnbar{g-1}{n+(2)}$, while the tautological algebra is defined using
the map from the $2$-fold cover $\mnbar{g-1}{n+2}$. We leave this to
the reader.

\bibliographystyle{amsalpha}
\bibliography{dm-bib}

\end{document}